\numberwithin{equation}{section}
\newtheorem{thm}{Theorem}[section]
\newtheorem{prop}[thm]{Proposition}
\newtheorem{lem}[thm]{Lemma}
\newtheorem{cor}[thm]{Corollary}
\newtheorem{hyp}{Hypothesis}
\theoremstyle{remark}
\newtheorem{rem}{Remark}[section]
\newtheorem{defn}{Definition}
\newcommand{\BBB}{\mathbb}
\newcommand{\R}{{\BBB R}}
\newcommand{\Z}{{\BBB Z}}
\newcommand{\T}{{\BBB T}}
\newcommand{\N}{{\BBB N}}
\newcommand{\C}{{\BBB C}}
\newcommand{\LR}[1]{{\langle {#1} \rangle }}
\newcommand{\al}{\alpha}
\newcommand{\be}{\beta}
\newcommand{\e}{\varepsilon}
\newcommand{\ta}{\tau}
\newcommand{\p}{\partial}
\newcommand{\de}{\delta}
\newcommand{\om}{\omega}
\newcommand{\Om}{\Omega}
\newcommand{\zz}{{\bf z}}
\newcommand{\supp}{\operatorname{supp}}
\newcommand{\I}{\infty}
\newcommand{\sgn}{\operatorname{sgn}}
\newcommand{\EQS}[1]{\begin{align} #1 \end{align}}
\newcommand{\EQQS}[1]{\begin{align*} #1 \end{align*}}
\newcommand{\F}{\mathcal{F}}
\newcommand{\1}{{\mathbf 1}}
\newcommand{\ti}{\widetilde}
\newcommand{\ha}{\widehat}
\title[One-dimensional dispersive equations with general nonlinearity]
{Unconditional well-posedness  for some nonlinear periodic one-dimensional dispersive equations}
\author[L. Molinet and T. Tanaka]{Luc Molinet and Tomoyuki Tanaka}
\address[L. Molinet]{Institut Denis Poisson, Universit\'e de Tours, Universit\'e d'Orl\'eans, CNRS, Parc Grandmont, 37200 Tours, France}
\email[L. Molinet]{Luc.Molinet@lmpt.univ-tours.fr}
\address[T. Tanaka]{Graduate School of Mathematics, Nagoya University,
Chikusa-ku, Nagoya, 464-8602, Japan}
\email[T. Tanaka]{d18003s@math.nagoya-u.ac.jp}
\keywords{Benjamin-Ono equation, dispersion generalized Benjamin--Ono equation, nonlinear dispersive equation, well-posedness, unconditional uniqueness, energy method}
\begin{document}
\maketitle
\setcounter{page}{001}

\begin{abstract}
  We consider the Cauchy problem for one-dimensional dispersive equations with a general nonlinearity in the periodic setting. Our main hypotheses are both that the dispersive operator behaves for high frequencies as a Fourier multiplier by $ i  |\xi|^\alpha \xi $, with $ 1\le \alpha \le 2 $, and that the nonlinear term is of the form $ \partial_x f(u) $ where $ f $ is the sum of an entire series with infinite radius of convergence. Under these conditions, we prove the unconditional local well-posedness of the Cauchy problem in $ H^{s}(\mathbb{T}) $ for $ s\ge 1-\frac{\alpha}{2(\alpha+1)}$. This leads to some global existence results in the energy space
   $ H^{\alpha/2}(\mathbb{T})  $,  for $ \alpha \in [\sqrt{2},2] $.
\end{abstract}

\section{Introduction}
Our goal is  to establish low regularity well-posedness results for quite
general one-dimensional nonlinear dispersive equations of KdV type in the periodic setting.
Our class contains  in particular the generalized KdV equations with fractional dispersion
\EQS{
\label{eq11}
&\p_t u -\partial_x D_x^\alpha  u +\p_x (f(u))=0,\quad (t,x)\in \R\times\T,\\
\label{initial}
& u(0,x)=u_0(x),\quad x\in\T,
}
where   $\T=\R/2\pi\Z$,  $ f:\R\to \R $  is the sum of an entire series, $\al\in[1,2]$ and $ D_x^\alpha $ is the  Fourier multiplier by $|\xi|^\alpha $.

Famous equations in this class are the generalized Korteweg-de Vries equation (gKdV)
\EQQS{
  \p_t u + \p_x^3 u  + \p_x (f(u))=0
}
 that corresponds to $ \alpha=2 $ as well as  the generalized Benjamin-Ono equation (gBO)
\EQQS{
  \p_t u -  {\mathcal H} \p_x^2 u  + \p_x (f(u))=0,
}
where $ {\mathcal H} $ is the Hilbert transform (Fourier multiplier by $ -i \sgn(\xi) $), that corresponds to $ \alpha =1$.

The Cauchy problem for this type of dispersive equations has been extensively studied since more than thirty years starting with the work of Kato
\cite{Kato83} (see also \cite{ABFS89}).
It is worth noticing that the first methods employed did not make use of the dispersive effects and the local well-posedness was restricted to Sobolev spaces with index $s> 3/2 $.
At the end of the eighties Kenig, Ponce and Vega succeeded in exploiting dispersive effects of the linear part of these equations in order to lower the required Sobolev regularity on the initial data (see for instance \cite{KPV1}, \cite{Po1}). They obtained in particular the well-posedness of the KdV equation in the energy space and were able in \cite{KPV2}   to apply  a contraction mapping argument on the integral formulation for some $k$-generalized KdV equations ($f(x)=x^k$). Then in the early nineties,
Bourgain introduced the now so-called Bourgain's spaces that take into account the localization of the space-time Fourier transform of the function and where one can solve by a fixed point argument  a wide class of dispersive equations on $ \R$ or $ \T $ with very rough initial data (\cite{B93}).  At this stage,  let us underline that since the nonlinearity of these equations is in general algebraic,  the fixed point argument ensures the real analyticity of the solution-map.

  In the early 2000's,
 Saut, Tzvetkov and the first author \cite{MST} noticed that a large class of ``weakly'' dispersive equations posed on the real line, including in particular
\eqref{eq11} with $ \alpha<2 $ and $ f(x)=x^2 $, cannot be solved by a fixed point argument for initial data in any Sobolev spaces $ H^s (\R) $. Since then, except in the case of integrability of the equation where methods
linked to this remarkable property have been recently developed (see for instance  \cite{KT06}, \cite{GKT} in the periodic setting), mainly two types of approaches have been developed to solve this class of equation in low regularity Sobolev spaces. The first one consists in applying a gauge
transform (see \cite{Tao04} for the Benjamin-Ono equation and \cite{HIKK} for the dispersion  generalized  KdV equation) to eliminate the worst nonlinear interactions that are some high-low frequency interactions and then to solve the equation after the gauge transform by using dispersive tools  as Strichartz's and Bourgain's type estimates.
The second one consists in enhancing the energy method (or modified energy method) with a priori estimates that takes the dispersion
into account. For this one possibility is to   improve the dispersive estimates by localizing a solution in space frequency depending time intervals and then mixing them with classical energy estimates. This type of methods was first introduced by Koch and Tzvetkov  \cite{KT1} (see also \cite{KK} for
some
 improvements) in the framework of Strichartz's spaces and then by Koch and Tataru \cite{KTa} (see also \cite{IKT}) in the framework of Bourgain's
spaces.
 Another method that does not need to localize in space frequency depending time intervals has been recently introduced in \cite{MV15}. Note that  in \cite{MPV18}, \cite{MPV19} it is shown  that in some cases  this last method can be improved by adding some a priori estimates obtained by localizing in space frequency depending time intervals.
We implement this last approach enhanced with some other tools in this paper.

\subsection{Presentation of the main results}
In this work we consider roughly the same type of dispersion term as in \cite{Guo12} and  \cite{MV15}  but with a much more general nonlinear term. More precisely
we consider dispersive equation of the form
\begin{equation}\label{eq1}
\p_t u + L_{\alpha+1} u + \partial_x(f(u)) =0
\end{equation}
under the two following hypotheses on $ L_{\al+1} $ and $ f$.
\begin{hyp}\label{hyp1}
$ L_{\alpha+1} $ is the Fourier multiplier operator by  $  - i p_{\al+1} $ where
$p_{\al+1}\in C^{1}(\R)\cap C^2(\R\backslash\{0\})$ is a real-valued odd function satisfying, for some $\xi_0>0$, $p_{\al+1}'(\xi)\sim \xi^\al$ and $ p_{\al+1}''(\xi)\sim \xi^{\al-1}$ for all $\xi\ge \xi_0$.
\end{hyp}
\begin{hyp}\label{hyp2}
$ f:\R\to\R$ is the sum of an entire series with infinite radius of convergence.
\end{hyp}
\begin{rem}
We choose a sign in Hypothesis \ref{hyp1} only to fix in an easy way the defocussing case  that leads to a global existence result  ((2) of Theorem
\ref{theo3} below). Of course for all other results the sign will play no
role here.
\end{rem}
 \begin{rem}
 Hypothesis \ref{hyp1} is fulfilled by the purely dispersive operators $ L:=\partial_x D_x^\alpha $ with $ \alpha>0 $, the linear Intermediate Long Wave operator
 $ L:=\partial_x  D_x \coth(D_x) $ and  some perturbations of the Benjamin-Ono equation as the Smith operator $ L:=\partial_x (D_x^2+1)^{1/2}$
(see \cite{Smith}) for which $ \alpha=1$.
 \end{rem}

\begin{rem} The hypothesis on $ f$ ensures that $ f$ is of class $ C^\infty $ and is at each point of $ \R $ the sum of its Taylor expansion at  the origin, i.e.,
\begin{equation}\label{hypf}
f(x)=\sum_{n=0}^{+\infty} \frac{f^{(n)}(0)}{n!} x^n , \quad \forall x\in \R \; .
\end{equation}
Of course any polynomial function enters in this class but also the exponential functions as $ e^x$, $ \sin(x)$,  $\cos(x) $ and their products and compositions.
\end{rem}
Here we concentrate ourself on the periodic setting. Actually we notice that in the real line case, following \cite{KT1}, one can exploit the classical Strichartz estimates in $ L^4_t L^\infty_x $  to get the following
improved Strichartz estimates on smooth solutions to \eqref{eq11} in the purely dispersive case $L_{\al+1}=\p_x D_x^\al$:
\EQQS{
  \|D_x^{1/2} u\|_{L^2_T L^\infty_x} \lesssim C(\|u\|_{L^\infty_T H^{(1-\alpha/4)+}_x})\; .
}
With this estimate in hand, the same method as the one developed in this work but with a clearly more simple decomposition, lead to the unconditional local well-posedness of \eqref{eq11} in $ H^{(1-\alpha/4)+}(\R)$ (see
\cite{P21} where the  KdV case $ \alpha= 2$ is treated with initial data that may be non vanishing at infinity).

Our approach is based on the method introduced in \cite{MV15} enhanced with some arguments that can be found in \cite{MPV19}, \cite{KS20p} and  \cite{KT1} . As noticed in \cite{MV15}, this method is particulary well-adapted to solve the Cauchy problem  associated with  one-dimensional dispersive wave equations in the Sobolev spaces $ H^s $ for $ s>1/2$. It combines classical energy estimates with Bourgain's type estimates that measure
the localization of the space-time Fourier transform of the solution around the curve given by the Fourier symbol of the associated linear dispersive equation.
In our approach we combine this with improved Strichartz  estimates and symmetrization arguments.
The main strategy is to use symmetry arguments to distribute the lost derivative to several functions and then to recover it by either improved Strichartz estimates or Bourgain's type estimates
 depending on whether the  nonlinear interactions are resonant. Since the equation for the difference of two solutions enjoys less symmetries,  this difference will be estimated in a lower regularity space than the solution itself and we will use the frequency envelope approach to recover the continuity result with respect to initial data.

We emphasize that our work was motivated by \cite{KS20p} where the local well-posedness of the $k$-gBO equation ($L_2= \partial_x D_x  $ and $f(x)
=x^k$, $k\ge 2$) is proven in $ H^{3/4}(\T) $ by using another method that is short time Fourier transform restriction method combined with modified energy estimates. It is interesting to underline that our local well-posedness result in this case is exactly the same (except that we also get the unconditional uniqueness)\footnote{To be precise, they also deduced a priori estimate at the regularity $s>1/2$.}. Both methods have the advantage to not use a gauge transformation and thus to be more flexible with respect to perturbations of the equation.
However we believe that our approach  is easier to implement to solve \eqref{eq1}.

We are not aware of any low regularity results in the literature on the Cauchy problem associated with \eqref{eq11} with a general nonlinearity in
the periodic setting.
However we notice that, in \cite{CKSTT04}, the local well-posedness of the  $k$-gKdV equation  was proven to be well-posed  in $ H^{1/2}(\T) $ by a contraction mapping argument in Bourgain's spaces. This suggests that the
LWP of the gKdV equation (at least with $ f$ satisfying hypothesis \ref{hyp2}) should be locally well-posed in $ H^{1/2+}(\T) $ whereas we only get the unconditional LWP of the gKdV equation in $ H^{2/3}(\T) $ in this work. On the other hand, in the case  $ \alpha<2 $, we  do not know any  local well-posedness results for \eqref{eq11}  with $ f(x) =x^k $, $k\ge
2$, in Sobolev spaces  with lower regularity than the ones obtained in Theorem \ref{theo1} below. Actually the only results we know are \cite{KS20p}, mentioned above, and  \cite{MR09} where  Ribaud and the first author proved the local well-posedness of the $k$-gBO equation in $ H^1(\T) $ by using gauge transform and Strichartz estimates. We believe that this approach could be adapted to quite general nonlinearity $ \partial_x (f(u))$ to get the LWP of the gBO equation in $ H^1(\T) $. However it is not clear how to adapt such approach for more general dispersive term as it is done here. 

   Before stating our main result, let us recall  our notion of solutions:
    \begin{defn}\label{def} Let $s>1/2$. We will say that $u\in L^\infty(]0,T[;H^s(\T)) $ is a solution to \eqref{eq1}  associated with the initial datum $ u_0 \in H^s(\T)$  if
  $ u $ satisfies \eqref{eq1}-\eqref{initial} in the distributional sense, i.e. for any test function $ \phi\in C_c^\infty(]-T,T[\times \T) $,  it
holds
  \begin{equation}\label{weakeq}
  \int_0^\infty \int_{\T} \Bigl[(\phi_t +L_{\alpha+1}\phi )u +  \phi_x f(u) \Bigr] \, dx \, dt +\int_{\T} \phi(0,\cdot) u_0 \, dx =0
  \end{equation}
 \end{defn}
 \begin{rem} \label{rem2} Note that for $u\in L^\infty(]0,T[;H^s(\T)) $,  with $ s>1/2 $, $ f(u) $ is well-defined and
  belongs to $ L^\infty(]0,T[;H^s(\T))$. Moreover,
  Hypothesis \ref{hyp1}  forces
 \EQQS{
   L_{\alpha+1} u \in L^\infty(]0,T[;H^{s-\alpha-1}(\T)) \, .
 }
  Therefore $ u_t \in L^\infty(]0,T[; H^{s-\alpha-1}(\T)) $  and \eqref{weakeq} ensures that  \eqref{eq1} is satisfied in $ L^\infty(]0,T[; H^{s-\alpha-1}(\T)) $.
  In particular, $ u\in C([0,T]; H^{s-\alpha-1}(\T))$ and \eqref{weakeq} forces  the initial condition $ u(0)=u_0 $. Note that this ensures that
$u\in C_w([0,T];H^s(\T)) $ and thus
   $ \|u_0\|_{H^{s}}\le  \|u\|_{L^\infty_T H^s} $. Finally, we notice that this also ensures that $ u $ satisfies the Duhamel formula associated with \eqref{eq1}.
 \end{rem}
 Finally, let us recall  the notion of  unconditional well-posedness that
was introduced by Kato \cite{Kato95}, which is, roughly speaking, the local well-posedness with uniqueness of solutions in $L^\I (]0,T[;H^s(\T))$.

 \begin{defn}  We will say that the Cauchy problem associated with \eqref{eq1} is unconditionally locally well-posed in $ H^s(\T )$ if for any initial data $ u_0\in H^s( {\T}) $ there exists $ T=T(\|u_0\|_{H^s})>0 $ and a solution
 $ u \in C([0,T]; H^s(\T)) $ to \eqref{eq1} emanating from $ u_0 $. Moreover, $ u $ is the unique solution to  \eqref{eq1} associated with $ u_0 $
that belongs to  $ L^\infty(]0,T[; H^s(\T) )$. Finally, for any $ R>0$, the solution-map $ u_0 \mapsto u $ is continuous from the ball of $ H^s(\T) $  with radius $ R $ centered at the origin  into $C([0,T(R)]; H^s( \T)) $.
 \end{defn}
We note that Babin, Ilyin and Titi \cite{BIT11}  proved the unconditional uniqueness of the KdV equation in $L^2(\T)$ by  integration by parts in time. This method, which is actually a normal form reduction, has been now succesfully applied to  a variety of dispersive equations (see for instance  \cite{GKO13,K19p1,K19p,KO12,KOY20,MPp} and references therein).

Our main result is the following one:

\begin{thm}[Unconditional well-posedness]\label{theo1}
Assume that Hypothesies \ref{hyp1}-\ref{hyp2} are satisfied
 with $ 1\le \alpha\le 2$. Then for any $ s\ge s(\al)=1-\frac{\alpha}{2(\alpha+1)}$ the Cauchy problem associated with \eqref{eq1} is unconditionally locally  well-posed in $ H^s(\T ) $ with a maximal time of existence $T\ge g(\|u_0\|_{H^{s(\al)}})>0 $ where $ g$ is a smooth decreasing
function depending only on  $ L_{\alpha+1}$ and $f$.
\end{thm}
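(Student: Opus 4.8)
The plan is to follow the energy method of \cite{MV15}, combining an $ L^\infty_T H^s $ energy bound with a Bourgain-type bound that quantifies the concentration of $ \ha{u} $ near the dispersion curve $ \ta = p_{\al+1}(\x) $. First I would prove local existence for smooth data (by parabolic regularization and the classical energy method) and reduce the theorem to three ingredients: (i) an a priori estimate controlling $ \|u\|_{L^\infty_T H^s} $ together with the Bourgain norm on a time interval depending only on $ \|u_0\|_{H^{s(\al)}} $; (ii) an estimate for the difference of two solutions in a space of lower regularity; and (iii) a frequency-envelope version of (i) to upgrade weak to strong convergence. Applying $ \LR{D_x}^s $ to \eqref{eq1}, pairing with $ \LR{D_x}^s u $ in $ L^2 $ and using that $ L_{\al+1} $ is skew-adjoint (its symbol $ -ip_{\al+1} $ is purely imaginary and odd, so by Parseval and the oddness of $ p_{\al+1} $ the quadratic dispersive form vanishes), the dispersive contribution drops and one is left to control the nonlinear contribution
\EQQS{
  \int_0^T \RE \LR{ \LR{D_x}^s \p_x f(u),\, \LR{D_x}^s u }_{L^2}\,dt.
}
Since $ \p_x f(u) $ carries one derivative, this expression formally pits $ 2s+1 $ derivatives against only $ 2s $ powers of $ L^2 $ control: the entire difficulty is to recover this single lost derivative.

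\textbf{The nonlinear estimate.} Using Hypothesis \ref{hyp2} I would expand $ f(u)=\sum_n \frac{f^{(n)}(0)}{n!}u^n $ and treat each monomial, the infinite radius of convergence guaranteeing that the resulting constants remain summable against $ \|u\|_{L^\infty_T H^s} $. For a fixed power I would Littlewood--Paley decompose the factors and exploit the symmetry of the multilinear form to redistribute the $ s $ derivatives carried by the output frequency onto whichever factors are convenient, so that no single factor ever absorbs the full loss. The frequencies are then split according to the size of the resonance function $ \Om = p_{\al+1}(\x)-\sum_j p_{\al+1}(\x_j) $; in the relevant high-low and high-high-to-low regimes Hypothesis \ref{hyp1} forces $ |\Om| \gec |\x_{\max}|^{\al}|\x_{\min}| $. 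On the non-resonant part ($ |\Om| $ large) I would integrate by parts in time, i.e.\ perform a normal-form reduction, so that $ \Om^{-1} $ yields a gain of $ |\x_{\max}|^{-\al} $ that absorbs the lost derivative precisely when $ \al\ge 1 $; the substituted terms, in which $ \p_t $ falls on a factor and is replaced by $ -L_{\al+1}u-\p_x f(u) $ through the equation, are reabsorbed, the dispersive substitution being paid for by the Bourgain norm and the nonlinear substitution producing a higher-order term handled by the same scheme. On the resonant part the smallest frequency is forced to be very low, and I would place that factor in $ L^\infty_{t,x} $ via the improved periodic Strichartz estimate of Koch--Tzvetkov type \cite{KT1}; balancing the half-derivative gained there against the dispersion exponent $ \al $, and optimizing the threshold in $ |\x_{\min}| $ that separates the normal-form and Strichartz treatments, is what pins the required regularity at $ s(\al)=1-\frac{\al}{2(\al+1)} $.

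\textbf{Difference, uniqueness and continuity.} For uniqueness I would set $ w=u-v $ for two solutions emanating from the same datum, so that $ w $ solves $ \p_t w + L_{\al+1}w + \p_x\bigl( w\int_0^1 f'(v+\ta w)\,d\ta\bigr)=0 $. Because this equation enjoys fewer symmetries, $ w $ is estimated in a space of regularity $ \si<s $, which still suffices to close a contraction-type bound $ \|w\|\lec T^\theta\,C\,\|w\| $ on a short interval and force $ w\equiv 0 $ after iteration. The unconditional character is the subtle point: a generic solution in $ L^\infty_T H^s $ is not assumed to lie in any Bourgain space, so the Bourgain bound needed in the difference estimate must be derived a posteriori, directly from the equation and the $ L^\infty_T H^s $ bound, which is exactly what the \cite{MV15} scheme permits. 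Existence follows by solving for regularized data and passing to the limit with the uniform a priori bound and Aubin--Lions compactness; finally, running the a priori estimate along a frequency envelope $ \{c_N\} $ with $ \sum_N c_N^2\sim\|u_0\|_{H^s}^2 $ yields equicontinuity of the dyadic pieces, hence strong continuity in time and continuity of the flow map.

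\textbf{Main obstacle.} I expect the crux to be closing the a priori estimate \emph{exactly} at $ s=s(\al) $. The normal-form reduction on the non-resonant part generates boundary and substituted terms whose frequency configurations can be as unfavorable as the original, so the energy bound and the Bourgain bound must be arranged to feed one another and close simultaneously; at the same time the resonant contribution must exploit the full strength of the improved Strichartz estimate with no room to spare, since any slack there would push the threshold above $ s(\al) $.
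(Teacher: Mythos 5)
Your overall architecture is the paper's: local existence for smooth data by the classical energy method, an a priori estimate at the regularity $s(\al)$ combining energy, Bourgain-type and improved Strichartz bounds, a difference estimate at regularity $s-1$, recovery of the Bourgain norm a posteriori from the $L^\infty_T H^s$ bound alone (which is what makes uniqueness unconditional), and a frequency envelope for continuity of the flow map. The genuine gap is in your treatment of the resonant interactions, which is backwards. The configuration that determines $s(\al)$ has \emph{four} input frequencies comparable to the output frequency $N$ and all the others much smaller; placing the smallest-frequency factor in $L^\infty_{t,x}$ gains nothing there, since the weight $N^{2s+1}$ must be absorbed by the four high-frequency factors alone. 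The paper's mechanism is the opposite of yours: the four comparable factors are put in $L^4_{T,x}$ via the improved Strichartz estimate of Proposition \ref{prop_stri}, which controls $D_x^{s-\be(\al)}P_Nu$ in $L^4_{T,x}$ with $\be(\al)=\frac{1}{4(\al+1)}$, while the genuinely low frequencies go in $L^\infty_{t,x}$ by plain Bernstein. The numerology $4(s-\be(\al))\ge 2s+1$ is exactly what yields $s\ge \tfrac12+2\be(\al)=s(\al)$; there is no ``half-derivative gain'' available on the torus in this regime, and the balancing you describe would not close at $s(\al)$.

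A secondary divergence: on the non-resonant part the paper does not integrate by parts in time. It decomposes each factor into $Q_{\gtrsim L}$ and $Q_{\ll L}$ with $L\sim|\Om_{k+1}|\gtrsim|\xi_3||\xi_1|^{\al}$ (Lemmas \ref{lem_res1}--\ref{lem_res2}); since the modulations cannot all be $\ll L$, some factor is controlled by $L^{-1}\|P_{N}u\|_{X^{0,1}}$, and the $X^{s-1,1}$ norm is supplied by Lemma \ref{lem1}. This achieves the same gain of $|\xi_1|^{-\al}$ you are after, but without the boundary and substituted terms of a normal form reduction. Your hybrid description (``dispersive substitution paid for by the Bourgain norm'') is not coherent as written: in the interaction representation no dispersive term is substituted, while outside it the substituted $L_{\al+1}u$ costs $\al+1$ derivatives. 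A normal-form route in the Babin--Ilyin--Titi spirit may well be viable, but you would still need a device such as the decomposition $\1_T=\1_{T,R}^{\textrm{low}}+\1_{T,R}^{\textrm{high}}$ to reconcile the sharp time restriction with the modulation projections, which you do not address.
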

\begin{rem}
The limitation $ s\ge s(\al)=1-\frac{\alpha}{2(\alpha+1)}$ in the above theorem is due to the following nonlinear interactions:  there are 3 input high frequencies of the same  order than the output frequency whereas all other input frequencies are much lower. In this configuration the nonlinear term can be resonant and to recover the lost derivative we  put
 the  4 terms with high frequencies in $ L^4_{t,x} $ (whereas the other ones are put in $ L^\infty_{t,x} $)  and we use the improved Strichartz
estimates in Proposition \ref{prop_stri}  to control their norms in this space. This is the lost of derivatives in these Strichartz estimates that leads to the value of $ s(\al) $. More precisely, in  Proposition \ref{prop_stri}, it is proven that  any solution $ u\in L^\infty(]0,T[;H^s(\T)) $ to \eqref{eq1} satisfies $ D_x^{s-\beta(\alpha)} u \in L^4_T L^4_x $ with $ \beta(\al)
=\frac{1}{4(\alpha+1)} $. Then, in order to recover the lost derivative
in this configuration,  direct calculations lead to $ 4(s-\beta(\alpha)) \ge 2s+1 \Leftrightarrow s\ge 1/2+2 \beta(\alpha)  \Leftrightarrow s\ge s(\al) $.
\end{rem}
Equation \eqref{eq1} enjoys the following conservation laws at the $ L^2$
and at the $ H^{\al/2} $-level:
\EQQS{
M(u)=\int_{\T} u^2 \quad \text{and} \quad
E(u)=\frac{1}{2} \int_{\T} u  \partial_x^{-1} L_{\alpha+1} u +\int_{\T}
F(u)
}
where $ \partial_x^{-1} L_{\alpha+1} $ is the Fourier multiplier by  $ \frac{p_{\alpha+1}(k)}{k} \1_{k\neq 0} $
and
\begin{equation}\label{defF} F(x) :=\int_0^x f(y) \, dy \; .
\end{equation}
At this stage it is worth noticing that Hypothesis \ref{hyp1} ensures that the restriction of the quadratic part of the energy $ E $ to high frequencies  behaves as the $ H^{\alpha/2}(\T)$-norm whereas its restriction to the low frequencies can be  controlled by the $ L^2$-norm.
Therefore, gathering these conservation laws with the above local well-posedness result we are able to obtain the two following global existence results:
\begin{thm}[Global existence for small initial data] \label{theo2}
Assume that Hypotheses \ref{hyp1}-\ref{hyp2} are satisfied with
 $ \alpha \in [\sqrt{2},2] $. Then there exists $ A=A(L_{\alpha+1},f) >0 $ such that for any initial data $ u_0\in H^s(\T) $ with $ s\ge \alpha/2 $ such that $ \|u_0\|_{H^{\alpha/2}} \le A $, the solution constructed in Theorem \ref{theo1} can be extended for all times. Moreover its trajectory is bounded in $ H^{\alpha/2}(\T) $.
\end{thm}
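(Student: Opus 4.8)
The plan is to feed the two conservation laws $M$ and $E$ into the local theory of Theorem~\ref{theo1} through a bootstrap argument, the key structural fact being that for $\al\in[\sqrt2,2]$ the energy space $H^{\al/2}(\T)$ sits at or above the local well-posedness threshold. Indeed, a direct computation gives $\al/2\ge s(\al)=1-\frac{\al}{2(\al+1)}\iff \al^2\ge 2$, so under our hypothesis $H^{\al/2}(\T)\hookrightarrow H^{s(\al)}(\T)$ and Theorem~\ref{theo1} applies at the regularity $s=\al/2$; moreover $\al/2>1/2$, whence $H^{\al/2}(\T)\hookrightarrow L^\infty(\T)$. Since only $\p_x f(u)$ enters \eqref{eq1}, we may assume $f(0)=0$ (this only shifts $E$ by the conserved quantity $f(0)\int_\T u$), so that $F$ from \eqref{defF} satisfies $F(x)=c_2x^2+G(x)$ with $G(x)=O(|x|^3)$ near the origin and $G$ entire.

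First I would record the coercivity of the energy at the $H^{\al/2}$ level. Writing $Q(u)=\frac12\int_\T u\,\p_x^{-1}L_{\al+1}u$ for the quadratic part of $E$, Hypothesis~\ref{hyp1} gives $\frac{p_{\al+1}(k)}{k}\sim |k|^\al\ge 0$ for $|k|\ge\xi_0$ and $|\frac{p_{\al+1}(k)}{k}|\lec 1$ for $|k|<\xi_0$. Splitting the Fourier series of $u$ into high and low frequencies and absorbing the low ones into $M(u)=\|u\|_{L^2}^2$ yields
\[
\|u\|_{H^{\al/2}}^2 \lec Q(u)+M(u).
\]
Next I would control the potential term using smallness: as long as $\|u\|_{H^{\al/2}}\le 1$ one has $\|u\|_{L^\infty}\lec 1$, so $u$ stays in a bounded range on which $|G(u)|\lec |u|^3$ pointwise, and hence, by $H^{\al/2}\hookrightarrow L^\infty$ and the compactness of $\T$,
\[
\Big|\int_\T F(u)\Big| \lec M(u)+\|u\|_{L^\infty}\|u\|_{L^2}^2 \lec M(u)+\|u\|_{H^{\al/2}}^3.
\]
This is the step where smallness is essential: it confines $u$ to a range on which the entire function $G$ is comparable to its leading cubic behaviour, neutralising its possibly fast growth at infinity.

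Combining these with $Q(u)=E(u)-\int_\T F(u)$ and the conservation of $M$ and $E$ gives, for any $t$ with $\|u(t)\|_{H^{\al/2}}\le 1$,
\[
\|u(t)\|_{H^{\al/2}}^2 \lec E(u_0)+M(u_0)+\Big|\int_\T F(u(t))\Big| \lec \|u_0\|_{H^{\al/2}}^2+\|u(t)\|_{H^{\al/2}}^3 .
\]
A standard continuity argument then produces $A=A(L_{\al+1},f)>0$ such that $\|u_0\|_{H^{\al/2}}\le A$ forces $\|u(t)\|_{H^{\al/2}}\lec\|u_0\|_{H^{\al/2}}\le 1$ throughout the interval of existence: the set of times where $\|u(t)\|_{H^{\al/2}}$ does not exceed a fixed multiple of $\|u_0\|_{H^{\al/2}}$ is nonempty, closed (since $u\in C([0,T];H^{\al/2})$) and open (by the displayed inequality), hence all of $[0,T]$. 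The conservation of $M$ and $E$ for these merely $H^{\al/2}$-valued solutions, which I regard as the main technical obstacle here, I would justify by approximating $u_0$ by smooth data $u_{0,n}$, invoking the continuity of the flow and persistence of regularity from Theorem~\ref{theo1} to get $u_n\to u$ in $C([0,T];H^{\al/2})$ with each $u_n$ conserving $M$ and $E$ classically, and then passing to the limit using the continuity of $M$ and $E$ on $H^{\al/2}$ (the latter via $H^{\al/2}\hookrightarrow L^\infty$ and $f$ entire).

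Finally I would globalize. Since $\|u(t)\|_{H^{s(\al)}}\le\|u(t)\|_{H^{\al/2}}$ remains bounded uniformly in time by a constant depending only on $\|u_0\|_{H^{\al/2}}$, and the local existence time of Theorem~\ref{theo1} depends only on this norm, restarting the Cauchy problem furnishes time steps bounded below by a fixed positive constant; reiterating on consecutive intervals extends the solution to all times while keeping its trajectory bounded in $H^{\al/2}(\T)$, which is the asserted conclusion.
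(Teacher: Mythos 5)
Your proposal is correct and follows essentially the same route as the paper: coercivity of the quadratic part of $E$ augmented by $M$ at the $H^{\alpha/2}$ level, conservation of $M$ and $E$ extended to $H^{\alpha/2}$ data by density and continuity of the flow, a cubic bound on the non-quadratic part of $F$ valid on a small ball via $H^{\alpha/2}\hookrightarrow L^\infty$, and a continuity/bootstrap argument feeding back into the local theory, which applies at $s=\alpha/2$ precisely because $\alpha\ge\sqrt{2}$. The only cosmetic difference is that the paper packages the bootstrap through an explicit cubic polynomial $p(x)=C_1(x^2-Cx^3)$ and a threshold on the conserved functional, whereas you absorb the cubic term directly into the norm inequality.
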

\begin{thm}[Global existence for arbitrary large initial data] \label{theo3}\text{ }\\
Assume that Hypotheses \ref{hyp1}-\ref{hyp2} are satisfied with
 $ \alpha \in [\sqrt{2},2] $. Then the solution constructed in  Theorem \ref{theo1} can be extended for all times if the function $F$ defined
in \eqref{defF}  satisfies one of the following conditions:
 \begin{enumerate}
 \item There exists $C>0  $ such that $|F(x)|\le C (1+|x|^{p+1})$ for some $ 0<p<2\alpha+1  $.
\item   There exists $ B>0  $ such that $ F(x) \le B , \; \forall x\in \R
$.
 \end{enumerate}
 Moreover its trajectory is bounded in $ H^{\alpha/2}(\T) $.
 \end{thm}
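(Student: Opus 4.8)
The plan is to promote the local solution of Theorem~\ref{theo1} to a global one by deriving a uniform-in-time bound on its $H^{\al/2}(\T)$-norm from the two conservation laws, and then invoking the blow-up alternative. The starting point is that the assumption $\al\in[\sqrt2,2]$ places the energy space above the well-posedness threshold: since $s(\al)=1-\frac{\al}{2(\al+1)}$, one checks directly that $s(\al)\le \al/2 \Leftrightarrow \al^2\ge 2$, whence $\|v\|_{H^{s(\al)}}\le\|v\|_{H^{\al/2}}$ for all $v$. Because Theorem~\ref{theo1} provides an existence time bounded below by $g(\|u(t_0)\|_{H^{s(\al)}})$ with $g$ increasing, the solution can be restarted on successive intervals of a length controlled from below as long as $\|u(t)\|_{H^{\al/2}}$ stays bounded; hence it suffices to prove $\sup_{t}\|u(t)\|_{H^{\al/2}}<\infty$.

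Next I would extract this bound from $E$. Conservation of $M$ gives $\|u(t)\|_{L^2}=\|u_0\|_{L^2}$ and, since the equation is in divergence form, the mean $\int_\T u$ is conserved as well. The quadratic part $\frac12\int_\T u\,\p_x^{-1}L_{\al+1}u$ of $E$ is a Fourier multiplier with symbol $\sim|k|^\al$ for $|k|\ge\xi_0$ and bounded for $|k|<\xi_0$ (the zero mode causing no trouble because $p_{\al+1}$ is odd and $C^1$), so splitting into high and low frequencies yields a coercivity estimate comparing it with $\|u\|_{\Hd^{\al/2}}^2$ up to a multiple of the conserved $\|u\|_{L^2}^2$. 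Combining this with conservation of $E$, and using that the sign fixed in Hypothesis~\ref{hyp1} makes $\int_\T F(u)$ enter with the favorable sign, I obtain
\EQQS{
\tfrac12\,\|u(t)\|_{\Hd^{\al/2}}^2 \le \int_\T F(u(t))\,dx + C\bigl(|E(u_0)|+\|u_0\|_{L^2}^2\bigr),
}
with $C=C(L_{\al+1})$. Everything is thus reduced to bounding $\int_\T F(u(t))$ from above by the conserved $L^2$-norm plus a term absorbable by the left-hand side.

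This is where the two hypotheses on $F$ are used. Under (2) the bound is immediate: $F\le B$ gives $\int_\T F(u(t))\le 2\pi B$, which closes the estimate at once. Under (1) I would write $\int_\T F(u)\le \int_\T|F(u)|\lesssim 1+\|u\|_{L^{p+1}}^{p+1}$ and invoke the Gagliardo--Nirenberg inequality on $\T$ (splitting off the conserved mean, which the homogeneous norm does not see): $\|u\|_{L^{p+1}}\lesssim\|u\|_{\Hd^{\al/2}}^{\theta}\|u\|_{L^2}^{1-\theta}+\|u\|_{L^2}$ with $\theta=\frac{p-1}{\al(p+1)}$. The resulting power of $\|u\|_{\Hd^{\al/2}}$ is $\theta(p+1)=\frac{p-1}{\al}$, and the condition $0<p<2\al+1$ is exactly equivalent to $\frac{p-1}{\al}<2$; Young's inequality then peels off $\frac14\|u\|_{\Hd^{\al/2}}^2$ to be absorbed on the left, leaving a constant depending only on $\|u_0\|_{L^2}$ and $E(u_0)$. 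In either case $\sup_t\|u(t)\|_{H^{\al/2}}\le C(L_{\al+1},f,\|u_0\|_{H^{\al/2}})$, and together with the first paragraph this extends the solution to $[0,\infty)$ with trajectory bounded in $H^{\al/2}(\T)$.

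I expect the only genuinely delicate point to be the absorption step under (1): one must verify that the Gagliardo--Nirenberg exponent is sharp enough that the admissible range is precisely $p<2\al+1$, and treat the zero Fourier mode consistently in both the coercivity estimate and the interpolation inequality, since $\|\cdot\|_{\Hd^{\al/2}}$ is blind to constants while $F$ is not. The sign bookkeeping needed to see that (2) requires only an upper bound on $F$—i.e. that the defocussing choice in Hypothesis~\ref{hyp1} is the relevant one—should also be made explicit.
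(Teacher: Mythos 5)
Your proposal is correct and follows essentially the same route as the paper: coercivity of the conserved energy's quadratic part modulo the conserved $L^2$-norm, the immediate bound $\int_\T F(u)\le 2\pi B$ in case (2), and in case (1) an interpolation estimate giving $\|u\|_{L^{p+1}}^{p+1}\lesssim 1+\|u\|_{L^2}^{p+1-\frac{p-1}{\alpha}}\|u\|_{H^{\alpha/2}}^{\frac{p-1}{\alpha}}$ with exponent $\frac{p-1}{\alpha}<2$ absorbed by Young's inequality, followed by iteration of the local theory using $s(\alpha)\le\alpha/2$ for $\alpha\ge\sqrt2$. The only cosmetic difference is that you invoke Gagliardo--Nirenberg on the homogeneous norm (tracking the mean), whereas the paper uses the Sobolev embedding $H^{\frac12-\frac{1}{p+1}}(\T)\hookrightarrow L^{p+1}(\T)$ followed by interpolation between $L^2$ and $H^{\alpha/2}$, which produces the same exponent.
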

 \begin{rem}
 Typical examples for the case (1) are \\
 $ \bullet $  $f(x)$ is a  polynomial function of degree strictly less than $ 2\alpha +1$. \\
  $\bullet $ $f(x)$ is a polynomial function of $ \sin(x) $ and $\cos(x) $.   \\
  Whereas typical example for the case (2) are \\
  $ \bullet $ $ f(x) $ is a  polynomial function of odd degree with $ \displaystyle\lim_{x\to +\infty} f(x)=-\infty $. \\
  $\bullet $ $f(x) =-\exp(x) $ or $ f(x)=-\sinh(x) $.
  \end{rem}
\section{Notation, function spaces and basic estimates}\label{notation}


\subsection{Notation}

Throughout this paper, $\N$ denotes the set of non negative integers.
For any positive numbers $a$ and $b$, we write $a\lesssim b$ when there exists a positive constant $C$ such that $a\le Cb$.
We also write $a\sim b$ when $a\lesssim b$ and $b\lesssim a$ hold.
Moreover, we denote $a\ll b$ if the estimate $b\lesssim a$ does not hold.
For two non negative numbers $a,b$, we denote $a\vee b:=\max\{a,b\}$ and $a\wedge b:=\min\{a,b\}$.
We also write $\LR{\cdot}=(1+|\cdot|^2)^{1/2}$.

For $u=u(t,x)$, $\F u=\tilde{u}$ denotes its space-time Fourier transform, whereas $\F_x u=\hat{u}$ (resp. $\F_t u$) denotes its Fourier transform in space (resp. time). We define the Riesz potentials by $D_x^s g:=\F_x^{-1}(|\xi|^s \F_x g)$.
We also denote the unitary group associated to the linear part of \eqref{eq1} by $U_\al(t)=e^{-tL_{\al+1}}$, i.e.,
\EQQS{
  U_\al(t)u=\F_x^{-1}(e^{itp_{\al+1}(\xi)}\F_x u).
}

In the present paper, we fix a smooth cutoff function $\chi$:
let $\chi\in C_0^\I(\R)$ satisfy
\EQQS{
  0\le \chi\le 1, \quad \chi|_{-1,1}=1\quad
  \textrm{and}\quad \supp\chi\subset[-2,2].
}
We set $\phi(\xi):=\chi(\xi)-\chi(2\xi)$.
For any $l\in\N$, we define
\EQQS{
  \phi_{2^l}(\xi):=\phi(2^{-l}\xi),\quad
  \psi_{2^l}(\ta,\xi):=\phi_{2^l}(\ta-p_{\al+1}(\xi)),
}
where $ip_{\al+1}(\xi)$ is the Fourier symbol of $L_{\al+1}$.
By convention, we also denote
\EQQS{
  \phi_0(\xi)=\chi(2\xi)\quad
  \textrm{and}\quad
  \psi_0(\ta,\xi)=\chi(2(\ta-p_{\al+1}(\xi))).
}
Any summations over capitalized variables such as $K, L, M$ or $N$ are presumed to be dyadic.
We work with non-homogeneous dyadic decompositions, i.e., these variables
ranges over numbers of the form $\{2^k; k\in\N\}\cup \{0\}$.
We call those numbers \textit{nonhomogeneous dyadic numbers}.
It is worth pointing out that $\sum_N\phi_N(\xi)=1$ for any $\xi\in\Z$,
\EQQS{
  \supp(\phi_N)\subset\{N/2\le |\xi|\le 2N\},\ N\ge 1,\quad
  \textrm{and}\quad
   \supp(\phi_0)\subset\{|\xi|\le 1\}.
}

Finally, we define the Littlewood--Paley multipliers $P_N$ and $Q_L$ by
\EQQS{
  P_N u=\F_x^{-1}(\phi_N \F_x u) \quad\textrm{and}
  \quad Q_Lu=\F^{-1}(\psi_L \F u).
}
We also set
$P_{\ge N}:=\sum_{K\ge N}P_K, P_{\le N}:=\sum_{K\le N}P_K, Q_{\ge N}:=\sum_{K\ge N}Q_K$ and $Q_{\le N}:=\sum_{K\le N}Q_K$.

\subsection{Function spaces}
For $1\le p\le \I$, $L^p(\T)$ is the standard Lebesgue space with the norm $\|\cdot\|_{L^p}$.

In this paper we will use the frequency envelope method (see for instance \cite{Tao04}  and \cite{KT1}) in order to show the continuity result with respect to initial data. To this aim, we have to slightly modulate the classical Sobolev spaces in the following way:
for $s\ge0$ and a dyadic sequence $\{\om_N\}$, we define $H_\om^s(\T)$ with the norm
\EQQS{
  \|u\|_{H_\om^s}
  :=\bigg(\sum_{N}\om_N^2 (1\vee N)^{2s}\|P_N u\|_{L^2}^2\bigg)^{1/2}.
}
Note that $H_\om^s(\T)=H^s(\T)$ when we choose $\om_N\equiv 1$.
Here, $H^s(\T)$ is the usual $L^2$--based Sobolev space.
If $B_x$ is one of spaces defined above, for $1\le p\le \I$ and $T>0$, we define the space--time spaces $L_t^p B_x :=L^p(\R;B_x)$ and $L_T^p B_x :=L^p([0,T];B_x)$ equipped with the norms (with obvious modifications for $p=\I$)
\EQQS{
  \|u\|_{L_t^p B_x}=\bigg(\int_\R\|u(t,\cdot)\|_{B_x}^p dt\bigg)^{1/p}\quad
  \textrm{and}\quad
  \|u\|_{L_T^p B_x}=\bigg(\int_0^T\|u(t,\cdot)\|_{B_x}^p dt\bigg)^{1/p},
}
respectively.
For $s,b\in\R$, we introduce the Bourgain spaces $X^{s,b}$ associated to the operator $L_{\al+1}$ endowed with the norm
\EQQS{
  \|u\|_{X^{s,b}}
  =\Bigg(\sum_{\xi=-\I}^\I \int_{-\I}^\I \LR{\xi}^{2s}\LR{\ta-p_{\al+1}(\xi)}^{2b}|\tilde{u}(\ta,\xi)|^2d\ta\Bigg)^{1/2}.
}
We also use a slightly stronger space $X_\om^{s,b}$ with the norm
\EQQS{
  \|u\|_{X_\om^{s,b}}
  :=\bigg(\sum_{N}\om_N^2 (1\vee N)^{2s}\|P_N u\|_{X^{0,b}}^2\bigg)^{1/2}.
}
We define the fuction spaces $Z^s $ (resp. $Z^s_\om $), with $s\in \R$, as $Z^s:= L_t^\I H^s\cap X^{s-1,1}$ (resp. $Z^s_\om:= L_t^\I H_\om^s\cap X_\om^{s-1,1}$), endowed with the natural norm
\EQQS{
  \|u\|_{Z^s}=\|u\|_{L_t^\I H^s}+\|u\|_{X^{s-1,1}} \quad
  (\text{resp}.\  \|u\|_{Z^s_\om}=\|u\|_{L_t^\I H^s_\om}+\|u\|_{X^{s-1,1}_\om}) .
}
We also use the restriction in time versions of these spaces.
Let $T>0$ be a positive time and $B$ be a normed space of space-time functions.
The restriction space $B_T$ will be the space of functions $u:]0,T[\times\T\to\R$ or $\C$ satisfying
\EQQS{
  \|u\|_{B_T}
  :=\inf\{\|\tilde{u}\|_B \ |\ \tilde{u}:\R\times \T\to\R\ \textrm{or}\ \C,\ \tilde{u}=u\ \textrm{on}\ ]0,T[\times\T\}<\I.
}

Finally, we introduce a bounded linear operator from $X_{\om,T}^{s-1,1}\cap L_T^\I H_\om^s$ into $Z_\om^s$ with a bound which does not depend on $s$ and $T$. The existence of this operator ensures that actually $ Z^s_{\om,T}= L_T^\I H^s_\om\cap X^{s-1,1}_{\om,T}$.
Following \cite{MN08}, we define $\rho_T$ as
\EQS{\label{def_ext}
  \rho_T(u)(t):=U_\al(t)\chi(t)U_\al(-\mu_T(t))u(\mu_T(t)),
}
where $\mu_T$ is the continuous piecewise affine function defined by
\EQS{
  \mu_T(t)=
  \begin{cases}
    0 &\textrm{for}\quad t\notin]0,2T[,\\
    t &\textrm{for}\quad t\in [0,T],\\
    2T-t &\textrm{for}\quad t\in [T,2T].
  \end{cases}
}

\begin{lem}\label{extensionlem}
  Let $\de\ge 1$, and suppose that the dyadic sequence $\{\om_N\}$ of positive numbers satisfies $\om_N\le \om_{2N}\le \de\om_N$ for $N\ge1$.
  Let $0<T\le 1$ and $s\in\R$.
  Then,
  \EQQS{
    \rho_T:&X_{\om,T}^{s-1,1} \cap L_T^\I H_\om^s\to Z^s_\om\\
    &u\mapsto \rho_T(u)
  }
  is a bounded linear operator, i.e.,
  \EQS{\label{eq2.1}
    \|\rho_T(u)\|_{L_t^\I H_\om^s}
    +\|\rho_T(u)\|_{X^{s-1,1}_\om}\lesssim
    \|u\|_{L_T^\I H_\om^s}
    +\|u\|_{X_{T,\om}^{s-1,1}},
  }
  for all $u\in X^{s-1,1}_{\om,T}\cap L_T^\I H_\om^s$.
  Moreover, it holds that
  \EQS{\label{eq2.1single}
  \|\rho_T(u)\|_{L_t^\I H_\om^s}
  \lesssim
  \|u\|_{L_T^\I H_\om^s}
  }
  for all $u\in L_T^\I H_\om^s$.
  Here, the implicit constants in \eqref{eq2.1} and \eqref{eq2.1single} can be chosen independent of $0<T\le 1$ and $s\in\R$.
\end{lem}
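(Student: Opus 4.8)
The plan is to work in the interaction representation (the profile) $w:=U_\al(-\cdot)u$ and to exploit the elementary identity, valid for any space--time function $g$,
\[
\|g\|_{X^{0,1}}\sim \|U_\al(-\cdot)g\|_{L^2_tL^2_x}+\|\p_t\bigl(U_\al(-\cdot)g\bigr)\|_{L^2_tL^2_x},
\]
which follows from $\widetilde{U_\al(-\cdot)g}(\ta,\x)=\tilde g(\ta+p_{\al+1}(\x),\x)$ (so the modulation weight $\LR{\ta-p_{\al+1}(\x)}$ becomes $\LR{\ta}$, comparable to the $H^1_t$ symbol). Since $P_N$, $U_\al(t)$ and $U_\al(-\mu_T(t))$ are all Fourier multipliers in $x$ while $\chi(t)$ is scalar, the operator $\rho_T$ commutes with $P_N$; it therefore suffices to estimate each Littlewood--Paley block $P_N\rho_T(u)$ and then reassemble with the weights $\om_N^2(1\vee N)^{2\bullet}$ dictated by $H^s_\om$ and $X^{s-1,1}_\om$. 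The bound \eqref{eq2.1single} is then immediate: by unitarity of $U_\al$ on each $H^s_\om$ we get $\|\rho_T(u)(t)\|_{H^s_\om}=|\chi(t)|\,\|u(\mu_T(t))\|_{H^s_\om}$, and since $\mu_T(t)\in[0,T]$ and $\|\chi\|_{L^\I}=1$ this is $\le\|u\|_{L^\I_TH^s_\om}$.

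For the $X^{s-1,1}_\om$--estimate I would set $v_N(t):=U_\al(-t)P_N\rho_T(u)(t)=\chi(t)\,(P_Nw)(\mu_T(t))$, so that the identity above gives $\|P_N\rho_T(u)\|_{X^{0,1}}\sim\|v_N\|_{L^2_{t,x}}+\|\p_tv_N\|_{L^2_{t,x}}$. Differentiating,
\[
\p_tv_N(t)=\chi'(t)\,(P_Nw)(\mu_T(t))+\chi(t)\mu_T'(t)\,(P_N\p_\ta w)(\mu_T(t)),
\]
where $|\mu_T'|=1$ on $]0,2T[$ and $\mu_T'=0$ elsewhere. The tent $\mu_T$ folds $]0,2T[$ onto $[0,T]$ by two affine pieces of slope $\pm1$, so the change of variables $\ta=\mu_T(t)$ yields, for any $g$, $\int_{\R}\chi(t)^2\|(P_Ng)(\mu_T(t))\|_{L^2}^2\,dt\lec \|P_Ng\|_{L^\I([0,T];L^2)}^2$ (the tail $t\notin]0,2T[$, where $\mu_T\equiv0$, contributing only the initial trace), and likewise with $\chi$ replaced by $\chi'$; the term carrying $\p_\ta w$ is supported in $]0,2T[$ and is controlled by $2\|P_N\p_\ta w\|_{L^2([0,T];L^2)}^2$. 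Using $\|P_Nw(\ta)\|_{L^2}=\|P_Nu(\ta)\|_{L^2}$ we reach the per-block bound
\[
\|P_N\rho_T(u)\|_{X^{0,1}}\lec \|P_Nu\|_{L^\I_TL^2}+\|P_N\p_\ta w\|_{L^2([0,T];L^2)}.
\]

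It remains to multiply by $\om_N(1\vee N)^{s-1}$ and square--sum over dyadic $N$. The $\p_\ta w$ contribution is harmless: by Tonelli the sum equals $\|\p_\ta w\|_{L^2([0,T];H^{s-1}_\om)}^2$, and picking an almost optimal extension $\ti u$ of $u$ (whose profile $\ti w=w$ on $]0,T[$) bounds it by $\|\ti u\|_{X^{s-1,1}_\om}^2\lec\|u\|_{X^{s-1,1}_{\om,T}}^2$. The first contribution is the crux and is where I expect the only real difficulty: the supremum in $t$ does \emph{not} commute with the sum over $N$, so $\sum_N\om_N^2(1\vee N)^{2(s-1)}\|P_Nu\|_{L^\I_TL^2}^2$ cannot be identified with $\|u\|_{L^\I_TH^s_\om}^2$ head-on. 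The resolution is the one-derivative gap between the weight $(1\vee N)^{s-1}$ that appears and the $(1\vee N)^{s}$ defining $H^s_\om$: for each fixed $N$ one has $\om_N^2(1\vee N)^{2s}\|P_Nu\|_{L^\I_TL^2}^2=\sup_{t}\om_N^2(1\vee N)^{2s}\|P_Nu(t)\|_{L^2}^2\le\|u\|_{L^\I_TH^s_\om}^2$, whence, distributing the spare factor $(1\vee N)^{-2}$,
\[
\sum_N\om_N^2(1\vee N)^{2(s-1)}\|P_Nu\|_{L^\I_TL^2}^2\le\Bigl(\sum_N(1\vee N)^{-2}\Bigr)\|u\|_{L^\I_TH^s_\om}^2\lec\|u\|_{L^\I_TH^s_\om}^2.
\]
Combining the two pieces gives the $X^{s-1,1}_\om$ part of \eqref{eq2.1}, and together with \eqref{eq2.1single} the whole statement. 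All constants stem only from $\|\chi\|_{W^{1,\I}}$, the factor $2$ from the fold and the convergent dyadic series $\sum_N(1\vee N)^{-2}$, hence are independent of $s\in\R$ and of $0<T\le1$, as required. The remaining technical point I would check is the well-definedness of the trace $u(0)$ entering the tail, which is legitimate because $X^{s-1,1}_{\om,T}\hookrightarrow C([0,T];H^{s-1}_\om)$ (as $b=1>1/2$), so $u(0)$ is controlled by the right-hand side of \eqref{eq2.1}.
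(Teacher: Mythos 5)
Your proof is correct and is essentially the standard argument behind this lemma, which the paper does not reproduce — it only cites Lemma 2.4 of \cite{MPV19}, whose proof (following \cite{MN08}) proceeds exactly as you do: conjugate by the free evolution so that $X^{0,1}$ becomes $H^1_tL^2_x$ of the profile, observe that multiplication by $\chi$ and composition with the tent map $\mu_T$ are bounded there, and exploit the one-derivative gap between $X^{s-1,1}_\om$ and $L^\I_tH^s_\om$ to carry out the $\ell^2$ summation over Littlewood--Paley blocks, which is indeed the crux. The only point worth flagging is the evaluation $u(0)$ on $\supp\chi\setminus\,]0,2T[$, which requires a trace; you handle it for \eqref{eq2.1} via $X^{s-1,1}_{\om,T}\hookrightarrow C([0,T];H^{s-1}_\om)$, and for \eqref{eq2.1single} alone the existence of a (weakly continuous) trace at $t=0$ is implicit in the statement and holds for the solutions to which the lemma is applied.
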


\begin{proof}
  See Lemma 2.4  in \cite{MPV19} for $ \om_N\equiv 1$ but it is obvious that the result does not depend on $ \om_N$.
\end{proof}

\subsection{Basic estimates}

In this subsection, we collect some fundamental estimates.
Well-known estimates are adapted for our setting $H_\om^s(\T)$ and $f(u)$.

\begin{lem}
  Let $s>0$.
  Let $\de\ge 1$, and suppose that the dyadic sequence $\{\om_N\}$ of positive numbers satisfies $\om_N\le \om_{2N}\le \de\om_N$ for $N\ge1$.
  Then we have the estimate
  \EQS{\label{eq2.2}
    \|uv\|_{H_\om^s}
    \lesssim \|u\|_{H_\om^s}\|v\|_{L^\I}+\|u\|_{L^\I}\|v\|_{H_\om^s}.
  }
  In particular for any fixed real entire function $ f $ with $ f(0)=0 $, there exists a real entire function $ G=G[f] $ that is increasing non negative on $ \R_+ $  such that
    \EQS{\label{eq2.2ana}
    \|f(u)\|_{H_\om^s}
    \lesssim   G(\|u\|_{L^\I}) \|u\|_{H_\om^s}.
  }
 \end{lem}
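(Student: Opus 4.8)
The plan is to establish the bilinear bound \eqref{eq2.2} by a Littlewood--Paley paraproduct decomposition, and then to deduce the composition estimate \eqref{eq2.2ana} by iterating \eqref{eq2.2} and summing the Taylor series of $f$. Throughout I will use that the hypothesis $\om_N\le\om_{2N}\le\de\om_N$ for $N\ge1$ means that $\{\om_N\}$ is both non-decreasing ($\om_K\le\om_N$ for $K\le N$) and doubling ($\om_K\sim_\de\om_N$ whenever $K\sim N$), and that Littlewood--Paley projectors (and their finite sums $\tilde P_N=\sum_{M\sim N}P_M$ and the low-frequency projectors $P_{\ll N}$) are bounded on $L^\I(\T)$ uniformly.

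For \eqref{eq2.2} I would write $uv=\sum_{N,M}(P_Nu)(P_Mv)$ and split into the three standard regions high-low ($M\ll N$), low-high ($N\ll M$), and high-high ($N\sim M$). In the high-low piece $\Pi_{\mathrm{hl}}=\sum_N(P_Nu)(P_{\ll N}v)$ the product has spatial frequency $\sim N$, so $P_K\Pi_{\mathrm{hl}}$ only sees $N\sim K$; using the doubling of $\{\om_N\}$ to replace $\om_K$ by $\om_N$ and $\|P_{\ll N}v\|_{L^\I}\lesssim\|v\|_{L^\I}$, one gets $\|P_K\Pi_{\mathrm{hl}}\|_{L^2}\lesssim\|P_Ku\|_{L^2}\|v\|_{L^\I}$, and summing yields the contribution $\|u\|_{H_\om^s}\|v\|_{L^\I}$. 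The low-high piece is symmetric and produces $\|u\|_{L^\I}\|v\|_{H_\om^s}$.

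The genuine difficulty is the high-high piece $w:=\sum_N(P_Nu)(\tilde P_Nv)$, where the output frequency $K$ may lie far below $N$, so for fixed $K$ one must sum over all $N\gtrsim K$. The naive estimate $\|P_Kw\|_{L^2}\lesssim\sum_{N\gtrsim K}\|P_Nu\|_{L^2}\|v\|_{L^\I}$ places the coefficients only in $\ell^1$, which is not controlled by $\|u\|_{H_\om^s}\in\ell^2$; this is exactly the point where a Schur test is needed and where $s>0$ becomes essential. Writing $c_K:=\om_K(1\vee K)^s$ and $d_N:=c_N\|P_Nu\|_{L^2}\|v\|_{L^\I}$, the quantity to bound is $\sum_K\big(\sum_{N\gtrsim K}(c_K/c_N)\,d_N\big)^2$, whose kernel $\mathcal K(K,N)=(c_K/c_N)\1_{N\gtrsim K}$ satisfies both Schur conditions: by monotonicity $\om_K\le\om_N$ and by $s>0$ one has $c_K/c_N\lesssim(K/N)^s$ for $K\le N$, and the dyadic tail $\sum_{j\ge0}2^{-js}$ converges. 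This gives $\|w\|_{H_\om^s}\lesssim\|u\|_{H_\om^s}\|v\|_{L^\I}$; the finitely many lowest-frequency interactions ($K=0$ or $N=0$) contribute only bounded constants and are absorbed. Combining the three regions proves \eqref{eq2.2}.

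For \eqref{eq2.2ana}, write $f(x)=\sum_{n\ge1}a_nx^n$ (using $f(0)=0$), a series of infinite radius by Hypothesis \ref{hyp2}. Iterating \eqref{eq2.2}, with $\|u^{n-1}\|_{L^\I}\le\|u\|_{L^\I}^{n-1}$, gives $\|u^n\|_{H_\om^s}\le A_n\|u\|_{L^\I}^{n-1}\|u\|_{H_\om^s}$ where $A_1=1$ and $A_n=C(1+A_{n-1})$ with $C\ge1$ the constant in \eqref{eq2.2}; since $A_{n-1}\ge1$ one has $A_n\le 2CA_{n-1}$, hence $A_n\le D^n$ with $D:=2C$. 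Summing then yields $\|f(u)\|_{H_\om^s}\le\big(\sum_{n\ge1}|a_n|D^n\|u\|_{L^\I}^{n-1}\big)\|u\|_{H_\om^s}$, so that $G(r):=\sum_{n\ge1}|a_n|D^nr^{n-1}$ has the required properties: as $\limsup_n(|a_n|D^n)^{1/n}=D\limsup_n|a_n|^{1/n}=0$ it is entire, and having non-negative coefficients it is non-negative and increasing on $\R_+$. The passage of the triangle inequality to the infinite sum is justified because the very same bound shows $\sum_n a_nu^n$ converges absolutely in $H_\om^s$ whenever $u\in L^\I\cap H_\om^s$, which is implicit in the finiteness of the right-hand side.
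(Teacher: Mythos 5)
Your proof is correct and follows essentially the same route as the paper: a Littlewood--Paley decomposition of the product in which the only delicate piece (high input frequency of one factor against a much lower output frequency) is summed using $s>0$ together with the monotonicity and doubling of $\{\om_N\}$ — your Schur test is the paper's Young's-inequality step in different clothing — followed by iteration of \eqref{eq2.2} and summation of the Taylor series to get \eqref{eq2.2ana} with the same $G$. The only cosmetic difference is that you perform a symmetric three-region paraproduct split, whereas the paper splits only $u$ into $P_{\ll N}u$ plus $\sum_{M\gtrsim N}P_Mu$ and treats your high-low and high-high regions in one stroke.
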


\begin{proof}
  The  proof  of \eqref{eq2.2} is identical with that of Lemma A.8 in \cite{Tao}.
  For reader's convenience, we provide the proof.

  The triangle inequality gives
  \EQQS{
    \|uv\|_{H_\om^s}
    \lesssim \bigg(\sum_{N\lesssim1}\om_N^2 \|P_N(uv)\|_{L^2}^2\bigg)^{1/2}
    +\bigg(\sum_{N\gg1}\om_N^2 N^{2s}\|P_N(uv)\|_{L^2}^2\bigg)^{1/2}
    =:A+B.
  }
  It is clear that $A\lesssim \|u\|_{H_\om^s}\|v\|_{L^\I}$ since $\om_{2^j}\le \de^j\om_1$.
  We further split
  \EQQS{
    \|P_N(uv)\|_{L^2}
    \lesssim \|P_N((P_{\ll N}u)v)\|_{L^2}
    +\sum_{M\gtrsim N}\|P_N((P_{M}u)v)\|_{L^2}.
  }
  For the first term, by impossible frequency interactions, we see that
  \EQQS{
    \|P_N((P_{\ll N}u)v)\|_{L^2}
    \le \|(P_{\ll N}u)P_{\sim N}v\|_{L^2}
    \lesssim \|u\|_{L^\I}\sum_{M\sim N}\|P_{M}v\|_{L^2}.
  }
  So, the contribution of this term to $B$ is bounded by $C\|u\|_{L^\I}\|v\|_{H_\om^s}$.
  For the second term, we simply bound
  \EQQS{
    \sum_{M\gtrsim N}\|P_N((P_{M}u)v)\|_{L^2}
    \lesssim \|v\|_{L^\I}\sum_{M\gtrsim N}\|P_{M}u\|_{L^2}
  }
  and so we see from $2^s\om_N N^s\le \om_{2N}(2N)^s$ and the Young inequality that
  \EQQS{
    \bigg(\sum_{N\gg1}\bigg(\sum_{M\gtrsim N}\om_N N^s
      \|P_M u\|_{L^2}\bigg)^2\bigg)^{1/2}
    &\lesssim \bigg(\sum_{k\gg1}\bigg(\sum_{j\gtrsim1}2^{-sj}\om_{2^{k+j}}2^{s(j+k)}\|P_{2^{k+j}} u\|_{L^2}\bigg)^2\bigg)^{1/2}\\
    &\lesssim\|u\|_{H_\om^s},
  }
  which implies that $B\lesssim \|u\|_{H_\om^s}\|v\|_{L^\I}+\|u\|_{L^\I}\|v\|_{H_\om^s}$.

  Now by the Minkowski inequality and \eqref{eq2.2}
      \EQQS{
   \|f(u)\|_{H_\om^s}
   &\le \sum_{k\ge 1} \frac{|f^{(k)}(0)|}{k!} \|u^k \|_{H_\om^s} 
    \le    \sum_{k\ge 1} \frac{|f^{(k)}(0)|}{k!}  C^{k-1} \|u\|_{L^\I}^{k-1} \|u\|_{H_\om^s}\\
    &\le  G (\|u\|_{L^\I}) \|u\|_{H_\om^s},
  }
  with $ G(x):=  \sum_{k\ge 1} \frac{|f^{(k)}(0)|}{k!}  C^{k-1} x^{k-1} $.
\end{proof}
\begin{lem}
  Assume that $s_1+s_2\ge 0, s_1\wedge s_2\ge s_3, s_3<s_1+s_2-1/2$.
  Then
  \EQS{\label{eq2.3}
    \|uv\|_{H^{s_3}}\lesssim \|u\|_{H^{s_1}}\|v\|_{H^{s_2}}.
  }
  In particular, for $f,g\in H^s(\T) $ with $s>1/2 $ and any fixed real entire function $ F $, there exists a real entire function $ G=G[f] $ that is increasing  non negative on $ \R_+ $ such that
   \EQS{\label{eq2.3ana}
    \|f(u)-f(v)\|_{H^{s-1}}\le G(\|v\|_{H^{s}}+ \|v\|_{H^s})\|u-v\|_{H^{s-1}}.
  }
\end{lem}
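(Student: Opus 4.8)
The plan is to establish the bilinear estimate \eqref{eq2.3} by a Littlewood--Paley paraproduct decomposition, and then to deduce the difference estimate \eqref{eq2.3ana} by writing $f(u)-f(v)$ as a product, applying \eqref{eq2.3}, and controlling the resulting factor by the composition bound \eqref{eq2.2ana}. The estimate \eqref{eq2.3} is the classical Sobolev product law in one dimension, so the content is really in the summations; \eqref{eq2.3ana} is then a short algebraic consequence.

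For \eqref{eq2.3} I would write $uv=\sum_{M,N}(P_M u)(P_N v)$ and split into the high--low region $N\ll M$, the symmetric low--high region $M\ll N$, and the high--high region $M\sim N$. In the high--low region the output frequency is $\sim M$, so after weighting by $M^{s_3}$ I would use $s_3\le s_1$ to move the derivative onto $u$, bound $\|P_N v\|_{L^\infty}\lesssim N^{1/2}\|P_N v\|_{L^2}$ by Bernstein, and sum over $N\ll M$; the gap hypothesis $s_3<s_1+s_2-\tfrac12$ is exactly what makes the resulting power $M^{s_3-s_1+1/2-s_2}$ negative and, in the borderline case $s_2=\tfrac12$, absorbs the logarithm (since then the gap forces $s_3<s_1$). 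The low--high region is symmetric via $s_3\le s_2$. The high--high region is the main obstacle: here $M\sim N$ but the output frequency $K$ may be as low as $0$, and the naive bound loses too much at low output. The fix is the bilinear Bernstein inequality $\|P_K((P_M u)(P_N v))\|_{L^2}\lesssim \min(K,M)^{1/2}\|P_M u\|_{L^2}\|P_N v\|_{L^2}$, the factor $K^{1/2}$ coming from $\|P_K g\|_{L^2}\lesssim K^{1/2}\|g\|_{L^1}$ on $\T$. Weighting by $K^{s_3}$ and summing over $M\gtrsim K$, I would conclude by a Schur test when $s_1+s_2>0$ (the row and column sums are $\lesssim 1$ thanks to $s_3<s_1+s_2-\tfrac12$ and $s_1+s_2>0$), and by a direct tail Cauchy--Schwarz when $s_1+s_2=0$, in which case the hypotheses force $s_3<-\tfrac12$ and hence $\sum_K K^{2s_3+1}<\infty$.

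For \eqref{eq2.3ana} I would start from the pointwise identity $f(u)-f(v)=(u-v)\,g$, where $g:=\int_0^1 f'\bigl(v+\theta(u-v)\bigr)\,d\theta$, which is legitimate since $u,v\in H^s\hookrightarrow L^\infty$ for $s>\tfrac12$. Applying \eqref{eq2.3} with $(s_1,s_2,s_3)=(s-1,\,s,\,s-1)$, whose three hypotheses reduce precisely to $s>\tfrac12$, gives $\|f(u)-f(v)\|_{H^{s-1}}\lesssim\|u-v\|_{H^{s-1}}\,\|g\|_{H^s}$. It then remains to bound $\|g\|_{H^s}$ uniformly in $\theta$. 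Writing $f'(x)=f'(0)+h(x)$ with $h:=f'-f'(0)$ entire and $h(0)=0$, I would combine Minkowski's integral inequality, the composition bound \eqref{eq2.2ana} applied to $h$, and the embedding $\|w\|_{L^\infty}\lesssim\|w\|_{H^s}$; since $w_\theta:=v+\theta(u-v)$ satisfies $\|w_\theta\|_{H^s}\le\|u\|_{H^s}+\|v\|_{H^s}$ for every $\theta\in[0,1]$, this yields $\|g\|_{H^s}\le G(\|u\|_{H^s}+\|v\|_{H^s})$ with the implicit constants absorbed into $G$.

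The final point to verify is that $G$ may be taken real entire, nonnegative and increasing on $\R_+$: the term $f'(0)$ contributes the constant $|f'(0)|\,\|1\|_{H^s}$, while the function $G_0$ furnished by \eqref{eq2.2ana} for $h$ is a power series with nonnegative coefficients, so $R\mapsto |f'(0)|\,\|1\|_{H^s}+C\,R\,G_0(C_s R)$ is entire with nonnegative coefficients and therefore increasing on $\R_+$. Combining this with the product estimate gives \eqref{eq2.3ana}. The only genuinely delicate step is the high--high summation in \eqref{eq2.3}; the remainder is routine bookkeeping.
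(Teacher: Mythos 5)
Your proposal is correct, and both halves diverge from what the paper actually does. For the product estimate \eqref{eq2.3} the paper gives no proof at all: it simply cites Lemma 3.4 of \cite{GLM14}; your paraproduct argument (high--low/low--high via Bernstein in $L^\infty$, high--high via the bilinear $L^1\to L^2$ Bernstein bound plus a Schur test, with the endpoint $s_1+s_2=0$ handled separately through $2s_3+1<0$) is a legitimate self-contained substitute, and your accounting of where the gap hypothesis $s_3<s_1+s_2-\tfrac12$ enters is accurate. For \eqref{eq2.3ana} the paper checks the same admissibility of the triple $(s_1,s_2,s_3)=(s-1,s,s-1)$, but then proceeds purely algebraically: it expands $f(u)-f(v)=\sum_{k\ge1}\frac{f^{(k)}(0)}{k!}(u^k-v^k)$, factors $u^k-v^k=(u-v)\sum_{j=0}^{k-1}u^jv^{k-1-j}$, applies \eqref{eq2.3} and the $H^s$ algebra property $k-1$ times to each monomial to get the bound $C^{k-1}(\|u\|_{H^s}+\|v\|_{H^s})^{k-1}\|u-v\|_{H^{s-1}}$, and resums into $G(x)=\sum_{k\ge1}\frac{|f^{(k)}(0)|}{k!}C^{k-1}x^{k-1}$. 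You instead factor once, analytically, via $f(u)-f(v)=(u-v)\int_0^1 f'(v+\theta(u-v))\,d\theta$, apply \eqref{eq2.3} a single time, and control the remaining factor by Minkowski's integral inequality together with \eqref{eq2.2ana} applied to $f'-f'(0)$. The paper's route stays entirely within the $C^{k-1}$ power-series bookkeeping used throughout the paper and hands you $G$ explicitly with nonnegative coefficients; yours is more economical in its use of the bilinear estimate and reuses \eqref{eq2.2ana} as a black box, at the minor cost of the extra devices (Minkowski in $\theta$, splitting off the constant $f'(0)$, the convexity bound $\|v+\theta(u-v)\|_{H^s}\le\|u\|_{H^s}+\|v\|_{H^s}$), all of which you justify correctly, so your $G$ is likewise entire, nonnegative and increasing on $\R_+$.
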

\begin{proof} \eqref{eq2.3} can be found in [\cite{GLM14}, Lemma 3.4]. To
prove \eqref{eq2.3ana} it suffices to check that for $ s>1/2$, $(s-1)+s=2s-1>0 $ and $ s-1<s-1/2+(s-1) $. Therefore
  \EQQS{
   \|f(u)-f(v)\|_{H^{s-1}}  &\le  \sum_{k\ge 1} \frac{|f^{(k)}(0)|}{k!} \|u^k -v^k\|_{H^{s-1}} \\
    &\le    \sum_{k\ge 1} \frac{|f^{(k)}(0)|}{k!}  \sum_{j=0}^{k-1}
\|u^j v^{k-1-j} (u-v)\|_{H^{s-1}} \\
    &\le \sum_{k\ge 1} \frac{|f^{(k)}(0)|}{k!}   C^{k-1} (\|u\|_{H^s}+\|v\|_{H^s})^{k-1} \|u-v\|_{H^{s-1}}\\
   & \le G(\|u\|_{H^s}+\|v\|_{H^s}) \|u-v\|_{H^{s-1}}
      }
  with $ G(x):=  \sum_{k\ge 1} \frac{|f^{(k)}(0)|}{k!}  C^{k-1} x^{k-1} $.
\end{proof}

We will  frequently use the following lemma, which can be seen as a variant of the integration by parts.

\begin{lem}\label{lem_comm1}
  Let $N\in 2^{\N}\cup\{0\}$.
  Then,
  \EQQS{
    \bigg|\int_\T \Pi(u,v)wdx\bigg|
    \lesssim\|u\|_{L_x^2}\|v\|_{L_x^2}\|\p_x w\|_{L_x^\I},
  }
  where
  \EQS{\label{def_pi}
    \Pi(u,v):=v \p_x P_N^2u +u \p_x P_N^2v .
  }
\end{lem}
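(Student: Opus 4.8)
The plan is to pass to the Fourier side and exploit the antisymmetry hidden in the definition of $\Pi$. Setting $g(\xi):=\xi\,\phi_N(\xi)^2$, so that $\partial_x P_N^2$ is the Fourier multiplier by $ig$, a direct expansion on $\T$ gives, for some absolute constant $C$,
\[
\int_\T \Pi(u,v)\,w\,dx
= C\sum_{k_1+k_2+k_3=0} i\bigl(g(k_1)+g(k_2)\bigr)\,\hat v(k_1)\,\hat u(k_2)\,\hat w(k_3),
\]
the two summands of $\Pi$ producing the terms $g(k_2)$ and $g(k_1)$ after relabeling. The key observation is that $\phi_N$ is even, hence $g$ is odd, so that $g(k_1)+g(k_2)=g(k_1)-g(-k_2)$ vanishes on the antidiagonal $\{k_1+k_2=0\}$.

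I would then extract the factor $k_1+k_2=-k_3$ by the fundamental theorem of calculus,
\[
g(k_1)+g(k_2)=(k_1+k_2)\int_0^1 g'\bigl(\theta k_1-(1-\theta)k_2\bigr)\,d\theta=:-k_3\,a(k_1,k_2),
\]
and use $i k_3\,\hat w(k_3)=\widehat{\partial_x w}(k_3)$ to transfer the derivative onto $w$. This rewrites the form as $C\int_\T (\partial_x w)\,F\,dx$, where $F$ is the bilinear pseudo-product of $v$ and $u$ with symbol $a(k_1,k_2)$; consequently $\bigl|\int_\T\Pi(u,v)\,w\,dx\bigr|\le C\,\|\partial_x w\|_{L^\infty}\,\|F\|_{L^1}$, and it remains only to prove $\|F\|_{L^1}\lesssim\|u\|_{L^2}\|v\|_{L^2}$ uniformly in $N$.

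To estimate $F$ I would resolve $a$ into a superposition of translations. Writing $g'(\eta)=\int_\R\widehat{g'}(y)\,e^{iy\eta}\,dy$ and inserting this into $a(k_1,k_2)=\int_0^1 g'(\theta k_1-(1-\theta)k_2)\,d\theta$, the fact that the multiplier $e^{iy\theta k}$ is the translation by $\theta y$ yields the physical-space representation $F(x)=\int_0^1\!\int_\R\widehat{g'}(y)\,v(x+\theta y)\,u\bigl(x-(1-\theta)y\bigr)\,dy\,d\theta$. Cauchy--Schwarz in $x$, together with the translation invariance of the $L^2(\T)$ norm, then gives $\|F\|_{L^1}\le\|\widehat{g'}\|_{L^1(\R)}\,\|u\|_{L^2}\,\|v\|_{L^2}$.

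The main --- and essentially only remaining --- point, which I expect to be the crux, is the uniform bound $\|\widehat{g'}\|_{L^1(\R)}\lesssim1$. Here I would invoke scaling: since $\phi_N(\xi)=\phi(\xi/N)$ for $N\ge1$ (and $\phi_0(\xi)=\chi(2\xi)$ is a single fixed cutoff), one has $g'(\xi)=\phi(\xi/N)^2+2(\xi/N)\phi(\xi/N)\phi'(\xi/N)=H(\xi/N)$ with $H(\eta):=\phi(\eta)^2+2\eta\phi(\eta)\phi'(\eta)$ a fixed smooth compactly supported function. As dilations preserve the $L^1$ norm of the Fourier transform, $\|\widehat{g'}\|_{L^1(\R)}=\|\widehat H\|_{L^1(\R)}<\infty$ independently of $N$, which closes the estimate. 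I note that a purely physical integration by parts only controls the combination in which the derivative lands on each of $u$ and $v$ symmetrically, so the oddness of $g$ (equivalently, the evenness of $\phi_N$) is genuinely needed to isolate the form $\int_\T\Pi(u,v)\,w\,dx$ itself.
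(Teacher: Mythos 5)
Your proof is correct, and it reaches the estimate by a route that is genuinely different in execution from the paper's, though ultimately dual to it. The paper first integrates by parts to rewrite $\int_\T \Pi(u,v)w\,dx=-\int_\T u\,[\p_x P_N^2,w]v\,dx$, reducing the lemma to the commutator bound $\|[\p_x P_N^2,w]v\|_{L^2}\lesssim\|\p_x w\|_{L^\I}\|v\|_{L^2}$; this is then proved in physical space by writing the commutator kernel as $k_N(x-y)(w(y)-w(x))$ with $k_N=\F_{x}^{-1}(\xi\phi_N^2(\xi))$, passing to $\R$ by Poisson summation, applying the mean value theorem to $w$, and invoking the scaling-invariant bound $\|x\,\F_{x,\R}^{-1}(\xi\phi_N^2(\xi))(x)\|_{L^1(\R)}=\|x\,\F_{x,\R}^{-1}(\xi\phi^2(\xi))(x)\|_{L^1(\R)}<\I$ together with Schur's test. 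You never form the commutator: you factor the symbol $g(k_1)+g(k_2)$ through the fundamental theorem of calculus (using the oddness of $g$, i.e.\ the evenness of $\phi_N$), move the extracted factor $k_3$ onto $w$, and control the remaining pseudo-product by the translation-averaging (coefficient) representation, at the cost of $\|\widehat{g'}\|_{L^1(\R)}$. Since $|\widehat{g'}(y)|=|y\,\widehat{g}(y)|$ up to constants and a reflection, your key $L^1$ quantity is literally the same as the paper's, and your dilation argument for its $N$-uniformity matches theirs. What each approach buys: the paper's version is shorter, because the mean value theorem applied to $w$ absorbs in one stroke what you do in two steps (symbol factorization plus translation representation); your version makes the cancellation mechanism --- the vanishing of the symbol on $\{k_1+k_2=0\}$ and the precise role of the evenness of $\phi_N$ --- fully explicit, and it adapts more readily to trilinear forms whose symbols are not of commutator type. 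In a final write-up you should only add a word on the interchange of the frequency sums with the integrals in $y$ and $\theta$ (immediate for trigonometric polynomials, then by density) and note that the evenness of $\chi$, hence of $\phi_N$, is a standing convention that the paper also uses tacitly.
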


\begin{proof}
  It suffices to show that
  \EQS{\label{eq4.10}
    \|[\p_x P_N^2, w]v\|_{L^2}
    \lesssim \|\p_x w\|_{L^\I}\|v\|_{L^2}
  }
  since the integration by parts shows
  \EQQS{
    \int_\T \Pi(u,v)wdx
    =\int_\T u \p_x P_N^2 v wdx
      -\int_\T u \p_x P_N^2 (v w)dx
    =-\int_\T u [\p_x P_N^2, w]v dx.
  }
  The Poisson summation formula and the mean value theorem imply that
  \EQQS{
    |([\p_x P_N^2, w]v)(x)|
    &=\bigg|\int_\T \F_x^{-1}(\xi \phi_N^2(\xi))(x-y)(w(y)-w(x))v(y)dy\bigg|\\
    &=\bigg|\int_\T \sum_{k\in\Z} \F_{x,\R}^{-1}(\xi \phi_N^2(\xi))(x-y+2\pi k)(w(y)-w(x))v(y)dy\bigg|\\
    &\le\int_\R |\F_{x,\R}^{-1}(\xi \phi_N^2(\xi))(x-y)(w(y)-w(x))v(y)|dy\\
    &\le\|\p_x w\|_{L^\I}
      \int_\R |(x-y)\F_{x,\R}^{-1}(\xi \phi_N^2(\xi))(x-y)v(y)|dy,
  }
  where $\F_{x,\R}^{-1}$ is the inverse Fourier transform on $\R$.
  A direct calculation gives
  \EQQS{
  \|x\F_{x,\R}^{-1}(\xi \phi_N^2(\xi))(x)\|_{L^1(\R)}
   =\|x\F_{x,\R}^{-1}(\xi \phi^2(\xi))(x)\|_{L^1(\R)}<\I.
  }
  This and the Minkowski inequality show \eqref{eq4.10}, which completes the proof.
\end{proof}

\if0
\begin{lem}[Lemma 3.5 in \cite{GLM14}]
  Let $0<s<1$, $p,p_1,p_2\in(1,\I)$ with $1/p_1+1/p_2=1/p$ and $s_1,s_2\in[0,s]$ with $s=s_1+s_2$.
  Then
  \EQS{\label{eq2.4}
    \|D_x^s(fg)-fD_x^sg-gD_x^sf\|_{L^p}
    \lesssim \|D_x^{s_1}f\|_{L^{p_1}}\|D_x^{s_2}g\|_{L^{p_2}}.
  }
  Moreover, for $s_1=0$, the value $p_1=\I$ is allowed.
\end{lem}
\fi

\section{Linear and improved Strichartz estimates}

In this section, we establish improved Strichartz estimates which play an important role in our estimates. Since we treat the general operator $L_{\al+1}$, we first have to  restate the standard Strichartz estimate.

\begin{defn}
  For $\al\in[1,2]$, we define
  \EQQS{
    \be(\al):=\frac{1}{4(\al+1)}, \quad
    b(\al):=\be(\al)+\frac{1}{4}
     \Bigg(=\frac{\al+2}{4(\al+1)}\Bigg).
  }
\end{defn}


We make use of the Strichartz estimate in the Bourgain space, which is originally obtained in \cite{B93}.

\begin{lem}\label{lem_stri1}
  There exists $C>0$ such that for any $v\in X^{0,b(\al)}$,
  \EQQS{
    \|v\|_{L_{t,x}^4} \le C\|v\|_{X^{0,b(\al)}}.
  }
\end{lem}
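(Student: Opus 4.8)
The plan is to follow Bourgain's original derivation of the periodic $L^4$ estimate \cite{B93}, adapted to the general phase $p_{\al+1}$. First I would pass to the Fourier side: since $\|v\|_{L_{t,x}^4}^2=\||v|^2\|_{L_{t,x}^2}=\|\F(|v|^2)\|_{L_\ta^2\ell_\xi^2}$ and $\F(|v|^2)=\ti v\ast\ov{\ti v}$ with $\ov{\ti v}(\ta,\xi)=\ov{\ti v(-\ta,-\xi)}$, proving the estimate amounts to controlling a convolution of $\ti v$ with itself. Setting $F(\ta,\xi):=\LR{\ta-p_{\al+1}(\xi)}^{b(\al)}|\ti v(\ta,\xi)|$, so that $\|F\|_{L^2\ell^2}=\|v\|_{X^{0,b(\al)}}$, a Cauchy--Schwarz in the convolution variables $(\ta_1,\xi_1)$ peels off the Bourgain weights and reduces everything to the single uniform kernel bound $\sup_{\ta,\xi}K(\ta,\xi)\lesssim 1$, where
\[
K(\ta,\xi):=\sum_{\xi_1\in\Z}\int_\R \LR{\ta_1-p_{\al+1}(\xi_1)}^{-2b(\al)}\LR{(\ta_1-\ta)-p_{\al+1}(\xi_1-\xi)}^{-2b(\al)}\,d\ta_1 .
\]
Indeed, Cauchy--Schwarz bounds $\|\F(|v|^2)\|_{L^2\ell^2}^2$ by $(\sup_{\ta,\xi}K)\,\|F\|_{L^2\ell^2}^4$, which is exactly the claimed inequality.

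Next I would carry out the elementary $\ta_1$--integration. Since $2b(\al)=\frac{\al+2}{2(\al+1)}\in(0,1)$ while $4b(\al)=\frac{\al+2}{\al+1}>1$, the convolution $\LR{\cdot}^{-2b(\al)}\ast\LR{\cdot}^{-2b(\al)}$ decays like $\LR{\cdot}^{-(4b(\al)-1)}$, so that, using that $p_{\al+1}$ is odd and $4b(\al)-1=\frac{1}{\al+1}$,
\[
K(\ta,\xi)\lesssim \sum_{\xi_1\in\Z}\LR{\ta-g_\xi(\xi_1)}^{-\frac{1}{\al+1}},\qquad g_\xi(\xi_1):=p_{\al+1}(\xi_1)-p_{\al+1}(\xi_1-\xi) .
\]
Thus the whole lemma is reduced to showing that this discrete resonance sum is bounded uniformly in $\ta\in\R$ and $\xi\in\Z$.

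The main obstacle is precisely this counting step, and it is here that Hypothesis \ref{hyp1} enters (after treating the bounded set of frequencies $|\xi_1|,|\xi_1-\xi|\le\xi_0$ separately). I would study the resonance function $g_\xi$ through $g_\xi'(\xi_1)=p_{\al+1}'(\xi_1)-p_{\al+1}'(\xi_1-\xi)$ and $g_\xi''(\xi_1)=p_{\al+1}''(\xi_1)-p_{\al+1}''(\xi_1-\xi)$. Since $p_{\al+1}'$ is even and increasing on $[\xi_0,\infty)$, $g_\xi$ has a single critical point near $\xi_1=\xi/2$, with curvature $|g_\xi''(\xi/2)|=2|p_{\al+1}''(\xi/2)|\sim\LR{\xi}^{\al-1}$ there, while away from $\xi/2$ the mean value theorem and Hypothesis \ref{hyp1} give the lower bound $|g_\xi'(\xi_1)|\gtrsim|\xi|\,\LR{\max(|\xi_1|,|\xi_1-\xi|)}^{\al-1}$. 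The plan is then to decompose the sum into dyadic shells $|\ta-g_\xi(\xi_1)|\sim 2^j$ and to count the integers $\xi_1$ in each shell: in the monotone region the count is $\lesssim 1+2^j/|g_\xi'|$, whereas near $\xi_1=\xi/2$ a van der Corput / second--derivative lattice count gives $\lesssim 1+(2^j\LR{\xi}^{1-\al})^{1/2}$. Summing the shells against the weight $2^{-j/(\al+1)}$ and inserting the two derivative bounds, the exponents balance exactly — this is what pins the value $b(\al)=\frac{\al+2}{4(\al+1)}$. I expect the delicate point to be the contribution of the near--critical region, where the lattice points of $g_\xi$ cluster at the bottom: the curvature there degenerates to a constant as $\al\to1$, so the endpoint $\al=1$ (the Benjamin--Ono case) is the most borderline and demands the sharpest bookkeeping of this clustering.
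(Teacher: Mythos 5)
There is a genuine gap, and it sits at the very first step: the reduction to the uniform kernel bound $\sup_{\ta,\xi}K(\ta,\xi)\lesssim 1$ is a reduction to a \emph{false} statement. At the endpoint $b(\al)$ the kernel is in fact infinite for every $(\ta,\xi)$. The simplest way to see it is $\xi=0$: then $g_0\equiv 0$ by oddness of $p_{\al+1}$ and $K(\ta,0)=\sum_{\xi_1\in\Z}\LR{\ta}^{-1/(\al+1)}=\I$. More seriously, even for $\xi\neq 0$ the large-modulation tail diverges: for $|\xi_1|\to\I$ with $\xi$ fixed one has $|g_\xi(\xi_1)|\sim |\xi|\,|\xi_1|^{\al}$, so the summand behaves like $(|\xi|\,|\xi_1|^{\al})^{-1/(\al+1)}$ and $\sum_{\xi_1}|\xi_1|^{-\al/(\al+1)}=\I$ since $\al/(\al+1)<1$. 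Your claim that ``the exponents balance exactly'' fails precisely here: in the dyadic shell $|\ta-g_\xi(\xi_1)|\sim 2^j$ the monotone-region count is of order $(2^j/|\xi|)^{1/\al}$, and $\sum_j 2^{-j/(\al+1)}\,2^{j/\al}$ diverges because $1/\al-1/(\al+1)=1/(\al(\al+1))>0$. A direct computation shows the global Cauchy--Schwarz/kernel method can only reach $b>\frac{\al+1}{4\al}$, which is strictly larger than $b(\al)=\frac{\al+2}{4(\al+1)}$ (e.g.\ $3/8$ versus $1/3$ for $\al=2$); since the paper needs exactly $b(\al)=\be(\al)+1/4$ to reach $s(\al)$, this loss is not acceptable.

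The fix is to perform the dyadic decomposition \emph{before} Cauchy--Schwarz, i.e.\ on the functions rather than on the kernel, which is what the paper does. Writing $v=\sum_L Q_L v$ and estimating each product $\|(Q_{L_1}v)\,\ov{(Q_{L_2}v)}\|_{L^2_{t,x}}$ separately, Cauchy--Schwarz at fixed modulation sizes only requires bounding the measure of the set where \emph{both} modulations are $\lesssim L_1\vee L_2$; the $\ta_1$-measure gives $(L_1\wedge L_2)$ and the counting lemma (Lemma~\ref{lem_counting}, applied to $g(\xi_1)=\ta+p_{\al+1}(\xi_1)+p_{\al+1}(\xi-\xi_1)$ with $|g'|\gtrsim (L_1\vee L_2)^{\al/(\al+1)}$ away from $\xi_1\approx\xi/2$, the near-critical set being trivially of cardinality $\lesssim (L_1\vee L_2)^{1/(\al+1)}$) gives the factor $(L_1\vee L_2)^{4\be(\al)}$. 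This yields the bilinear Lemma~\ref{lem_stri1_1}, and the lemma then follows by summing over $(L_1,L_2)$: the Schur kernel $(L_1\wedge L_2)^{1/2}(L_1\vee L_2)^{2\be(\al)}L_1^{-b(\al)}L_2^{-b(\al)}=\bigl(\tfrac{L_1\wedge L_2}{L_1\vee L_2}\bigr)^{1/4-\be(\al)}$ decays off the diagonal, so the $\ell^2$ almost-orthogonality in modulation is retained. That orthogonality is exactly what your single global Cauchy--Schwarz discards, and it cannot be recovered afterwards by decomposing the divergent kernel into shells.
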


Lemma \ref{lem_stri1} immediately follows from the following lemma (see the Appendix in \cite{M07} for similar considerations).

\begin{lem}\label{lem_stri1_1}
  There exists $C=C(\xi_0)>0$ such that for any real-valued functions $u,v \in L^2(\R; l^2(\Z))$ and any $L_1,L_2\in 2^\N$,
  \EQQS{
    \|(\psi_{L_1}u)*_{\ta,\xi}(\psi_{L_2}v)\|_{L_\ta^2 l_\xi^2}
    \le C(L_1 \wedge L_2)^{1/2}(L_1 \vee L_2)^{2\be(\al)}
    \|\psi_{L_1}u\|_{L_\ta^2 l_\xi^2}
    \|\psi_{L_2}v\|_{L_\ta^2 l_\xi^2}.
  }
\end{lem}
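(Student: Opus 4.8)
The plan is to combine the resonance relation with a lattice-point count governed by the curvature of the symbol. Write $a:=\psi_{N_1}u$ and $b:=\psi_{N_2}v$, so that $a$ (resp.\ $b$) is supported in $\{|\ta-p_{\al+1}(\x)|\sim N_1\}$ (resp.\ $\{|\ta-p_{\al+1}(\x)|\sim N_2\}$), and set $m:=N_1\wedge N_2$ and $M:=N_1\vee N_2$. For a fixed output $(\ta,\x)$ the crucial point is the resonance identity: writing $\sigma_1=\ta_1-p_{\al+1}(\x_1)$ and $\sigma_2=(\ta-\ta_1)-p_{\al+1}(\x-\x_1)$ for the two modulations, one has $\sigma_1+\sigma_2=\ta-H_\x(\x_1)$ with $H_\x(\x_1):=p_{\al+1}(\x_1)+p_{\al+1}(\x-\x_1)$, which does not depend on $\ta_1$. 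Hence, for fixed $(\ta,\x,\x_1)$, the $\ta_1$-support of the integrand has measure $\lesssim m$ and is nonempty only when $|\ta-H_\x(\x_1)|\lesssim M$.

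The genuinely resonant interactions are the high-high-to-low ones --- note that $H_0\equiv 0$ since $p_{\al+1}$ is odd --- so I would split the output frequency into the two regions $|\x|\le M^{1/(\al+1)}$ and $|\x|>M^{1/(\al+1)}$. On the high-output region, Cauchy--Schwarz in $(\ta_1,\x_1)$ over the support yields
\[
|(a*_{\ta,\x}b)(\ta,\x)|^2\lesssim m\,\mathcal N(\ta,\x)\sum_{\x_1}\int|a(\ta_1,\x_1)|^2|b(\ta-\ta_1,\x-\x_1)|^2\,d\ta_1,
\]
where $\mathcal N(\ta,\x):=\#\{\x_1\in\Z:|H_\x(\x_1)-\ta|\lesssim M\}$. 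Summing in $(\ta,\x)$ and changing variables $(\ta_1,\x_1)\mapsto(\ta-\ta_1,\x-\x_1)$ collapses the last factor to $\|a\|^2\|b\|^2$, so everything reduces to bounding $\mathcal N$ on $|\x|>M^{1/(\al+1)}$.

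The heart of the matter is the counting estimate $\mathcal N(\ta,\x)\lesssim 1+(M/|\x|^{\al-1})^{1/2}$ for $\x\neq0$, which I would extract from Hypothesis \ref{hyp1}. Since $H_\x'(\x_1)=p_{\al+1}'(\x_1)-p_{\al+1}'(\x-\x_1)$ vanishes only at the symmetric point $\x_1=\x/2$ (apart from $O(\xi_0)$ exceptional low frequencies), while $H_\x''(\x_1)=p_{\al+1}''(\x_1)+p_{\al+1}''(\x-\x_1)\sim|\x|^{\al-1}$ near $\x_1=\x/2$, the function $H_\x$ is convex there with curvature $\sim|\x|^{\al-1}$. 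Cutting each monotone branch into a central interval of length $r=(M/|\x|^{\al-1})^{1/2}$, on which the sublevel set $\{|H_\x-\ta|\lesssim M\}$ has length $\lesssim r$ by convexity, and its tail, on which $|H_\x'|\gtrsim|\x|^{\al-1}r$ so that at most $1+M/(|\x|^{\al-1}r)=1+r$ integers survive, gives the claim. For $|\x|>M^{1/(\al+1)}$ this is $\lesssim M^{1/(\al+1)}$, and the high-output contribution is therefore $\lesssim m^{1/2}M^{1/(2(\al+1))}\|a\|\|b\|=m^{1/2}M^{2\be(\al)}\|a\|\|b\|$.

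On the low-output region $|\x|\le M^{1/(\al+1)}$ the count degenerates, so there I would retain the $\ell^2_{\x_1}$ structure rather than apply Cauchy--Schwarz in $\x_1$: Young's inequality in $\ta$, with the lower-modulation factor placed in $L^1_\ta$ (at a cost of $m^{1/2}$), gives $\|(a*_{\ta,\x}b)(\cdot,\x)\|_{L^2_\ta}\lesssim m^{1/2}(A*B)(\x)$, where $A(\x_1):=\|a(\cdot,\x_1)\|_{L^2_\ta}$, $B(\x_2):=\|b(\cdot,\x_2)\|_{L^2_\ta}$ and $*$ is convolution on $\Z$. Since $\|A*B\|_{\ell^\infty}\le\|A\|_{\ell^2}\|B\|_{\ell^2}$ and there are only $\lesssim M^{1/(\al+1)}$ such output frequencies, this contribution is again $\lesssim m^{1/2}M^{2\be(\al)}\|a\|\|b\|$. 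Summing the two regions finishes the proof. I expect the main obstacle to be the counting estimate: deriving it uniformly in $(\ta,\x)$ from the one-sided asymptotics $p_{\al+1}'(\x)\sim\x^\al$ and $p_{\al+1}''(\x)\sim\x^{\al-1}$ (valid only for $\x\ge\xi_0$, with $p_{\al+1}$ merely $C^1$ at the origin) requires separating the neighborhood of $\x_1=\x/2$ from the high-high tails and absorbing the $O(\xi_0)$ low-frequency exceptions.
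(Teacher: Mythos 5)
Your argument is correct and rests on the same core mechanism as the paper's proof: Cauchy--Schwarz in $(\ta_1,\xi_1)$ reduces the bilinear estimate to counting the integers $\xi_1$ in a sublevel set of the resonance function $H_\xi(\xi_1)=p_{\al+1}(\xi_1)+p_{\al+1}(\xi-\xi_1)$, with the output frequency split at the same threshold $|\xi|\sim (N_1\vee N_2)^{1/(\al+1)}$. The two places where you diverge are worth recording. On the low-output region the paper first reduces to positive frequencies (legitimate since $u,v$ are real-valued), so that $0\le\xi_1\le\xi$ and the count is trivially $\le \xi+1\lesssim (N_1\vee N_2)^{1/(\al+1)}$; your Young-inequality argument with $\|A*B\|_{\ell^\infty}\le\|A\|_{\ell^2}\|B\|_{\ell^2}$ reaches the same bound without that reduction, which is slightly more robust. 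On the high-output region the paper never states a curvature bound: it splits $[0,\xi]$ at $|\xi-2\xi_1|^{\al+1}\sim L$ and applies the first-derivative counting lemma (Lemma \ref{lem_counting}) with $|H_\xi'(\xi_1)|=\bigl|\int_{\xi_1}^{\xi-\xi_1}p_{\al+1}''\bigr|\gtrsim(\xi-2\xi_1)^{\al}\ge L^{\al/(\al+1)}$ outside the central interval; your sublevel-set estimate $\mathcal N\lesssim 1+(M/|\xi|^{\al-1})^{1/2}$ is proved by exactly that splitting (with the sharper central radius $r=(M/|\xi|^{\al-1})^{1/2}$), so the two are interchangeable and both give $M^{1/(\al+1)}=M^{4\be(\al)}$ on $|\xi|>M^{1/(\al+1)}$. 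The one step you should not leave implicit is the range of $\xi_1$: without the positive-frequency reduction the count runs over all of $\Z$, and on the opposite-sign tails $\xi_1<0$ or $\xi_1>\xi$ the critical point $\xi/2$ and the curvature heuristic are irrelevant; there one checks directly that $|H_\xi'(\xi_1)|\gtrsim|\xi|^{\al}\ge M^{\al/(\al+1)}$ (superadditivity of $x\mapsto x^{\al}$ for $\al\ge1$), which again yields $\lesssim M^{1/(\al+1)}$ points --- or, more simply, one borrows the paper's reduction to $\supp_\xi\subset\N$ at the outset. You flag this issue and the $O(\xi_0)$ exceptional low frequencies correctly, and both are absorbed into $C(\xi_0)$ exactly as in the paper.
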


For the proof of Lemma \ref{lem_stri1_1}, we use the following:

\begin{lem}\label{lem_counting}
  Let $I$ and $J$ be two intervals on the real line and $g\in C^1(J;\R)$.
  Then
  \EQQS{
    \# \{x\in J\cap\Z;g(x)\in I\}\le\frac{|I|}{\inf_{x\in J}|g'(x)|}+1.
  }
\end{lem}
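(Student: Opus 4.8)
The plan is to reduce everything to the mean value theorem together with the single arithmetic observation that the integer points being counted are spaced at least one unit apart. First I would dispose of the trivial cases: if $|I|=\infty$, or if $\inf_{x\in J}|g'(x)|=0$ (with the convention $|I|/0=+\infty$), the right-hand side is $+\infty$ and there is nothing to prove. So I may assume $m:=\inf_{x\in J}|g'(x)|>0$ and $|I|<\infty$.

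The key preliminary step is to upgrade the hypothesis $|g'|\ge m>0$ into strict monotonicity of $g$. Since $g\in C^1(J;\R)$, its derivative $g'$ is continuous on the interval $J$; because $g'$ never vanishes (it stays $\ge m$ in absolute value) and $J$ is connected, the intermediate value theorem forces $g'$ to keep a constant sign on $J$. Hence $g$ is strictly monotone on $J$. This monotonicity is exactly what will permit the telescoping below, and in my view it is the only point of the argument that requires any care at all; everything else is bookkeeping.

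Next I would enumerate the points to be counted. Write $\{x\in J\cap\Z:g(x)\in I\}=\{x_1<x_2<\cdots<x_n\}$ (if this set were infinite the desired bound would force $m=0$, already excluded, so $n<\infty$). Because these are distinct integers, consecutive ones satisfy $x_{j+1}-x_j\ge 1$, and since $J$ is an interval containing $x_j$ and $x_{j+1}$, the whole segment $[x_j,x_{j+1}]$ lies in $J$. Applying the mean value theorem there produces $\xi_j\in(x_j,x_{j+1})$ with $g(x_{j+1})-g(x_j)=g'(\xi_j)(x_{j+1}-x_j)$, whence $|g(x_{j+1})-g(x_j)|\ge m(x_{j+1}-x_j)\ge m$.

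Finally I would collect these estimates using monotonicity. Because $g$ is monotone, all increments $g(x_{j+1})-g(x_j)$ carry the same sign, so $\sum_{j=1}^{n-1}|g(x_{j+1})-g(x_j)|=|g(x_n)-g(x_1)|$. As $g(x_1)$ and $g(x_n)$ both lie in $I$, this telescoped quantity is at most $|I|$. Combining with the per-step lower bound gives $(n-1)m\le|I|$, that is $n\le |I|/m+1$, which is precisely the assertion.
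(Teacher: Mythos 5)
Your proof is correct. The paper does not actually prove this lemma --- it simply refers to Lemma 2 in \cite{ST01} --- and your argument is the standard one found there: the only non-bookkeeping point is exactly the one you isolate, namely that $|g'|\ge m>0$ on the connected set $J$ forces $g'$ to have constant sign, so that the increments $g(x_{j+1})-g(x_j)$ telescope to $|g(x_n)-g(x_1)|\le |I|$ rather than merely dominating it via the triangle inequality; combined with the mean value theorem and the unit spacing of integers this gives $(n-1)m\le |I|$. Your handling of the degenerate cases and of finiteness (apply the bound to arbitrarily large finite subsets) is also sound.
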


\begin{proof}
  See Lemma 2 in \cite{ST01}.
\end{proof}

\begin{proof}[Proof of Lemma \ref{lem_stri1_1}]
  Following \cite{B93}, we may assume that $\supp_\xi \psi_{L_1} u, \supp_\xi \psi_{L_2} v \subset \N$ since $u$ and $v$ are real-valued.
  Following the argument of Lemma 3.1 in \cite{Schi20},
  the Cauchy-Schwarz inequality gives
  \EQQS{
  &\|(\psi_{L_1}u)*_{\ta,\xi}(\psi_{L_2}v)\|_{L_\ta^2 l_\xi^2}^2\\
  &=\sum_{\xi\ge0}\int_\ta \bigg|\sum_{\xi\ge \xi_1}\int_{\ta_1}\psi_{L_1}(\ta_1,\xi_1)u(\ta_1,\xi_1)\psi_{L_2}(\ta-\ta_1,\xi-\xi_1) v(\ta-\ta_1,\xi-\xi_1)d\ta_1\bigg|^2 d\ta\\
  &\lesssim\sup_{(\ta,\xi)\in\R\times\N}A(\ta,\xi)
    \|\psi_{L_1} u\|_{L_\ta^2 l_\xi^2}^2
    \|\psi_{L_2} v\|_{L_\ta^2 l_\xi^2}^2,
  }
  where
  \EQQS{
    A(\ta,\xi)
    &\lesssim mes\{(\ta_1,\xi_1)\in \R\times\N;\xi-\xi_1\ge 0, \LR{\ta_1 - p_{\al+1}(\xi_1)}\sim L_1\\
    &\quad\ \textrm{and}\ \LR{\ta-\ta_1 - p_{\al+1}(\xi-\xi_1)}\sim L_2\}\\
    &\lesssim (L_1\wedge L_2)\#B(\ta,\xi)
  }
  with
  \EQQS{
    B(\ta,\xi)=\{\xi_1\ge 0;\xi-\xi_1\ge 0\ \textrm{and}\ \LR{\ta - p_{\al+1}(\xi_1) - p_{\al+1}(\xi-\xi_1)}\lesssim L_1\vee L_2\}.
  }
  For simplicity, we put $L:=L_1\vee L_2$.
  When $\xi\le L^{1/(\al+1)}+2\xi_0+2$, it is clear that
  \EQQS{
    \# B(\ta,\xi)
    \le L^{1/(\al+1)}+2\xi_0+2\le C(\xi_0) L^{1/(\al+1)}
  }
  since $0\le\xi_1\le \xi$ and $L\ge 1$.
  On the other hand, when $\xi\ge L^{1/(\al+1)}+2\xi_0+2$,
  putting $g(\xi_1):=\ta - p_{\al+1}(\xi_1) - p_{\al+1}(\xi-\xi_1)$,
  we obtain
  \EQQS{
    \# B(\ta,\xi)
    &\lesssim \#\{\xi_1\ge 0; \xi\ge 2\xi_1\ \textrm{and}\ |g(\xi_1)|\lesssim L\}\\
    &\le [\xi_0]+1+\#\{\xi_1\ge 0; \xi\ge 2\xi_1,\xi_1\ge \xi_0\ \textrm{and}\ |g(\xi_1)|\lesssim L\}\\
    &\lesssim \#\{\xi_1\in[\xi_0,\xi/2]; |\xi-2\xi_1|^{\al+1}\ge L\ \textrm{and}\ |g(\xi_1)|\lesssim L\}+[\xi_0]+L^{1/(\al+1)},
  }
  where $[a]$ is the integral part of $a$.
  Here, we used the symmetry in the first inequality.
  It is worth noticing that the set in the right hand side is not empty since $(\xi-L^{1/(\al+1)})/2\ge \xi_0$ by the assumption.
  Lemma \ref{lem_counting} implies that
  \EQQS{
    \#\{\xi_1\in[\xi_0,\xi/2]; |\xi-2\xi_1|^{\al+1}\ge L\ \textrm{and}\ |g(\xi_1)|\lesssim L\}\lesssim L^{1/(\al+1)}.
  }
  Indeed, we have
  \EQQS{
    |g'(\xi_1)|
    =\bigg|\int_{\xi_1}^{\xi-\xi_1}p_{\al+1}''(\theta)d\theta\bigg|
    \sim (\xi-\xi_1)^{\al}-\xi_1^{\al}\ge (\xi-2\xi_1)^\al
    \ge L^{\al/(\al+1)}
  }
  since for $\theta\ge\xi_1$, $p_{\al+1}''$ does not change sign since $|p_{\al+1}''(\theta)|\sim|\theta|^{\al-1}$ and $p_{\al+1}''$ is continuous
outside $0$.
  This completes the proof.
\end{proof}
 Lemma \ref{lem_stri1}  enables to establish the following Strichartz estimate:
\begin{lem}\label{lem_stri2}
  Let $T>0$.
  Then there exists $C>0$ such that for any $u\in L^2(\T)$,
  \EQQS{
    \|U_\al(t)u\|_{L_{T,x}^4}
    \le CT^{1/2-b(\al)}\|u\|_{L_x^2}.
  }
\end{lem}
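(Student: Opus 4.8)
The plan is to deduce this time-localized Strichartz estimate from the Bourgain-space estimate of Lemma \ref{lem_stri1} by inserting a rescaled smooth time cutoff. I would set $v(t,x):=\chi(t/T)\,U_\al(t)u(x)$, where $\chi$ is the fixed cutoff from Section \ref{notation}. Since $\chi\equiv 1$ on $[-1,1]$, we have $\chi(t/T)=1$ for $t\in[0,T]$, so that $v=U_\al(\cdot)u$ on $]0,T[\times\T$; hence $\|U_\al u\|_{L^4_{T,x}}\le\|v\|_{L^4_{t,x}}$, and Lemma \ref{lem_stri1} reduces the whole matter to bounding $\|v\|_{X^{0,b(\al)}}$ by $CT^{1/2-b(\al)}\|u\|_{L^2}$.

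The second step is a direct computation of the Bourgain norm of $v$. Since the space-time Fourier transform of $U_\al(t)u$ is carried by the curve $\ta=p_{\al+1}(\xi)$ (it equals $\ha u(\xi)$ times a Dirac mass there), multiplication by $\chi(t/T)$ convolves in $\ta$ and gives $\tilde v(\ta,\xi)=T\,\ha\chi\bigl(T(\ta-p_{\al+1}(\xi))\bigr)\ha u(\xi)$, where $\ha\chi$ denotes the one-dimensional Fourier transform of $\chi$. Plugging this into the definition of $\|\cdot\|_{X^{0,b(\al)}}$ and performing the changes of variables $\sigma=\ta-p_{\al+1}(\xi)$ and then $\rho=T\sigma$, one arrives (up to a harmless constant) at
\[
\|v\|_{X^{0,b(\al)}}^2 = T\Bigl(\int_\R \LR{\rho/T}^{2b(\al)}|\ha\chi(\rho)|^2\,d\rho\Bigr)\|u\|_{L^2}^2 .
\]

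Finally, for $0<T\le 1$ one has $\LR{\rho/T}\le T^{-1}\LR{\rho}$, so the $\rho$-integral is bounded by $T^{-2b(\al)}\int_\R\LR{\rho}^{2b(\al)}|\ha\chi(\rho)|^2\,d\rho$, which is a finite constant because $\chi$ is smooth and compactly supported (hence $\ha\chi$ is Schwartz) and $b(\al)$ is fixed. This yields $\|v\|_{X^{0,b(\al)}}\lesssim T^{1/2-b(\al)}\|u\|_{L^2}$ and, combined with the first step, the claimed estimate. There is no serious obstacle; the only point requiring attention is the restriction $0<T\le 1$, which is exactly what the weight inequality $\LR{\rho/T}\le T^{-1}\LR{\rho}$ needs and which is the regime relevant to the construction of solutions. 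Indeed, testing on a single Fourier mode shows that $\|U_\al u\|_{L^4_{T,x}}$ genuinely grows like $T^{1/4}\|u\|_{L^2}$ as $T\to\I$, so since $b(\al)>1/4$ the exponent $1/2-b(\al)$ can only furnish an upper bound for small $T$.
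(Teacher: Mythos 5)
Your proof is correct. The paper's own ``proof'' is only a citation of Lemma 2.1 in \cite{MR09}, and your argument --- inserting the rescaled cutoff $\chi(t/T)$, computing $\tilde v(\ta,\xi)=T\ha{\chi}(T(\ta-p_{\al+1}(\xi)))\ha{u}(\xi)$, bounding the resulting weight by $\LR{\rho/T}\le T^{-1}\LR{\rho}$ for $0<T\le1$, and then invoking Lemma \ref{lem_stri1} --- is precisely the standard proof of that cited estimate. Your observation that the constant can only be uniform for $T\lesssim 1$ (the single-mode example showing growth $T^{1/4}$ versus $1/2-b(\al)<1/4$) is accurate and harmless, since the lemma is only applied in the paper on intervals of length $|I_{j,N}|\lesssim N^{-1}\le 1$.
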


\begin{proof}
  See Lemma 2.1 in \cite{MR09}.
\end{proof}

We are now ready to prove our improved Strichartz estimates for solutions
to \eqref{eq1}.
We point out that it is crucial to state estimates in $l^4(\N)$--form since we are not allowed to lose any derivatives in order to reach $s=s(\al)$. For that purpose, how to choose $c_{j,N}$ plays an important role in the proof below. This type of argument can be found in [Lemma 2.4, \cite{NW07}] for instance.

\begin{prop}\label{prop_stri}
  Let $\de\ge 1$, and suppose that the dyadic sequence $\{\om_N\}$ of positive numbers satisfies $\om_N\le \om_{2N}\le \de\om_N$ for $N\ge1$.
  Let $s>1/2$, $\al\in[1,2]$ and $0<T<1$.
  Let $u\in C([0,T]; H_\om^{s}(\T))$ satisfy \eqref{eq1}--\eqref{initial}
with $u_0\in H_\om^{s}(\T)$ on $[0,T]$.
  Then,
  \EQS{\label{eq_stri1}
    \Bigg(\sum_{N} \om_N^4\|D_x^{s-\be(\al)}P_N u\|_{L_{T,x}^4}^4\Bigg)^{1/4}
    \lesssim T^{1/8}  G(\|u\|_{L^\I_{T,x}}) \|u\|_{L_T^\I H_\om^s}
  }
  and
  \EQS{\label{eq_stri2}
  \Bigg(\sum_N\|D_x^{1/3}P_N u\|_{L_{T}^3 L_x^\infty}^3\Bigg)^{1/3}
  \lesssim T^{5/24}G(\|u\|_{L^\I_{T,x}}) \|u\|_{L_T^\I H_x^{7/12+\be(\al)}},
  }
  where $ G=G[f] $ is an entire function that is increasing  and non negative on $ \R_+ $.
\end{prop}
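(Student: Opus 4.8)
The plan is to prove both estimates by one scheme: localize $u$ in time on frequency-dependent intervals, use the linear Strichartz estimate (Lemma \ref{lem_stri2}) on each piece through Duhamel's formula, and then reassemble the $\ell^4_N$ (resp. $\ell^3_N$) sum. The first thing I would do is record what the final reassembly needs. Because the left-hand sides are $\ell^4_N L^4_{t,x}$ (resp. $\ell^3_N L^3_t L^\infty_x$) norms, once each Littlewood--Paley piece is estimated by a quantity carrying an $L^4_T$ (resp. $L^3_T$) time-norm, one may commute the sum over $N$ with the time integral by Fubini, apply the elementary embedding $\ell^2_N\hookrightarrow\ell^4_N$ (resp. $\ell^2_N\hookrightarrow\ell^3_N$) at fixed $t$, and close with $\int_0^T\|u(t)\|_{H^s_\om}^4\,dt\le T\|u\|_{L^\infty_T H^s_\om}^4$. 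Producing a genuine $L^4_T L^2_x$ bound, rather than an $L^\infty_T L^2_x$ one (which would only yield a Besov-type norm $\ell^2_N L^\infty_T L^2_x$ that is \emph{not} controlled by $H^s_\om$), is the whole point of the construction below and is exactly what the choice of the coefficients $c_{j,N}$ serves.

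For a dyadic $N$ with $N\gtrsim T^{-1}$ I would partition $[0,T]$ into $\sim TN$ consecutive intervals $I_j$ of length $\tau_N\sim N^{-1}$ and, on each one, write Duhamel from a base point $c_{j,N}\in I_j$,
\[
  P_N u(t)=U_\al(t-c_{j,N})P_N u(c_{j,N})-\int_{c_{j,N}}^{t} U_\al(t-t')\,P_N\p_x f(u)(t')\,dt',\qquad t\in I_j.
\]
Here $c_{j,N}$ is chosen, by the mean value theorem applied to $t\mapsto\|P_N u(t)\|_{L^2}^2$, so that $\|P_N u(c_{j,N})\|_{L^2}^2\le\tau_N^{-1}\int_{I_j}\|P_N u\|_{L^2}^2\,dt$. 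Applying Lemma \ref{lem_stri2} on $I_j$ to the homogeneous term, and Minkowski's inequality together with Lemma \ref{lem_stri2} to the Duhamel term, then summing the fourth powers over $j$ and using Cauchy--Schwarz (resp. Hölder) in time to pass from $\sum_j\|P_N u(c_{j,N})\|_{L^2}^4$ and $\sum_j\big(\int_{I_j}\|P_N\p_x f(u)\|_{L^2}\big)^4$ to multiples of $\tau_N^{-1}\|P_N u\|_{L^4_T L^2_x}^4$ and $\tau_N^{3}\|P_N\p_x f(u)\|_{L^4_T L^2_x}^4$, the powers of $\tau_N$ and $N$ combine. With $\tau_N\sim N^{-1}$ and $b(\al)=\be(\al)+\tfrac14$ the exponents are tuned so that every power of $N$ beyond the one already carried by $H^s_\om$ cancels; this exact cancellation is what forces the scale $\tau_N\sim N^{-1}$ and makes the estimate lossless in derivatives, hence sharp for reaching $s=s(\al)$. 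For the inhomogeneous contribution one finishes with \eqref{eq2.2ana}, bounding $\|f(u)(t)\|_{H^s_\om}\le G(\|u(t)\|_{L^\infty})\|u(t)\|_{H^s_\om}\le G(\|u\|_{L^\infty_{T,x}})\|u\|_{L^\infty_T H^s_\om}$, which is where the entire function $G$ appears. Estimate \eqref{eq_stri2} follows by the same argument after first using Bernstein's inequality $\|P_N v\|_{L^\infty_x}\lesssim N^{1/4}\|P_N v\|_{L^4_x}$ and Hölder in time $\|\,\cdot\,\|_{L^3_{I_j}}\lesssim\tau_N^{1/12}\|\,\cdot\,\|_{L^4_{I_j}}$ to reduce the $L^3_T L^\infty_x$ norm to the $L^4_{t,x}$-Strichartz norm; the same bookkeeping then produces the budget $7/12+\be(\al)$ and the factor $T^{1/3}$.

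The genuinely low frequencies $N\lesssim T^{-1}$ do not admit an interval of length $N^{-1}$ inside $[0,T]$ and must be handled separately by an elementary short-time bound (Hölder in time with Bernstein in space, or the finitely many lowest modes directly in $L^\infty_x$). I expect this to be the delicate part: on a single interval of length $T$ the Strichartz bound is lossy in $T$, so recovering the stated power of $T$ requires arguing carefully around the crossover $N\sim T^{-1}$. Apart from this endpoint bookkeeping, the principal difficulty is structural rather than computational, namely the simultaneous need to (i) generate an $L^4_T$ (not $L^\infty_T$) time-norm so that the $\ell^2_N\hookrightarrow\ell^4_N$ reduction is available — achieved by the choice of $c_{j,N}$ — and (ii) choose $\tau_N\sim N^{-1}$ so that no derivative is lost, which is pinned down by the opposite constraints arising from the homogeneous and the inhomogeneous parts of Duhamel's formula.
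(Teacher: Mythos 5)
Your proposal follows essentially the same route as the paper's proof: the same partition of $[0,T]$ into intervals of length $\sim N^{-1}$, the same choice of base points $c_{j,N}$ (the paper takes the minimum of $\|D_x^s P_N u(t)\|_{L^2_x}^4$ on $I_{j,N}$, which serves exactly your mean-value purpose), Duhamel plus Lemma \ref{lem_stri2} on each piece, \eqref{eq2.2ana} for the inhomogeneous term, the $\ell^2_N\hookrightarrow\ell^4_N$ reassembly, and the Bernstein/H\"older reduction of \eqref{eq_stri2} to the $L^4_{t,x}$ case. The low-frequency regime $1\ll N\lesssim T^{-1}$ that you flag as delicate is not treated separately in the paper either; running the single-interval estimate there only costs a factor $(TN)^{-4\be(\al)}\le T^{-4\be(\al)}$ in the homogeneous term, i.e., a slightly smaller but still positive power of $T$, which is all that is needed downstream.
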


\begin{proof}
  It suffices to consider the case $N\gg1$.
  We divide the interval in small intervals of length $\sim T N^{-1}$.
  In other words, we define $\{I_{j,N}\}_{j\in J_N}$ so that $\bigcup_{j\in J_N}I_{j,N}=[0,T]$, $|I_{j,N}|\sim T N^{-1}$ and $\# J_N\lesssim N$.
  By the hypothesis, we see that $\|D_x^s P_N u(t)\|_{L_x^2}^4\in C([0,T])$.
  For $j\in J_N$, we choose $c_{j,N}\in I_{j,N}$ at which $\|D_x^s P_N u(t)\|_{L_x^2}^4$ attains its minimum on $I_{j,N}$.
  We see from \eqref{eq1}--\eqref{initial} that
  \EQQS{
    P_N u(t)
    =e^{-(t-c_{j,N})L_{\al+1}}P_N u(c_{j,N})+\int_{c_{j,N}}^t e^{-(t-t')L_{\al+1}}P_N \p_x (f(u))(t')dt'
  }
  for $t\in I_{j,N}$.
  Lemma \ref{lem_stri2} and the Bernstein inequality show that
  \EQQS{
    &\Bigg(\sum_{N\gg1}\sum_{j}\om_N^4\|D_x^{s-\be(\al)}e^{-(t-c_{j,N})L_{\al+1}}P_N u(c_{j,N})\|_{L^4(I_{j,N};L_x^4)}^4\Bigg)^{1/4}\\
    &\lesssim\Bigg(\sum_{N\gg1}\sum_{j}\om_N^4
     |I_{j,N}|^{2-4 b(\alpha)} N^{-4\beta(\alpha)}
      \|D_x^s P_N u(c_{j,N})\|_{L_x^2}^4\Bigg)^{1/4}\\
    &\lesssim T^{-\be(\al)} \Bigg(\sum_{N\gg1}\sum_{j}\om_N^4|I_{j,N}|
      \|D_x^s P_N u(c_{j,N})\|_{L_x^2}^4\Bigg)^{1/4}\\
    &\lesssim T^{-\be(\al)}
    \Bigg(\sum_{N\gg1}\int_0^T\om_N^4
      \|D_x^s P_N u(t)\|_{L_x^2}^4dt\Bigg)^{1/4}
    \lesssim T^{1/4-\be(\al)}\|u\|_{L_T^\I H_\om^s}.
  }
  In the last inequality, we used the fact $l^2(\N)\hookrightarrow l^4(\N)$.
  Next, we estimate the contribution of the Duhamel term. To simplify the
expressions we set
   $\tilde{f}=f-f(0) $.
  Lemma \ref{lem_stri2}, the H\"older inequality in time and \eqref{eq2.2ana} imply that
  \EQQS{
    &\Bigg(\sum_{N\gg1}\sum_{j}\om_N^4\bigg\|\int_{c_{j,N}}^t e^{-(t-t')L_{\al+1}}D_x^{s-\be(\al)}P_N \p_x (\tilde{f}(u))(t')dt'\bigg\|_{L^4(I_{j,N};L_x^4)}^4\Bigg)^{1/4}\\
    &\lesssim\Bigg(\sum_{N\gg1}\sum_{j}
     \om_N^4|I_{j,N}|^{1-4\be(\al)}N^{4-4\be(\al)}
     \bigg(\int_{I_{j,N}}
     \|D_x^s P_N  \tilde{f}(t')\|_{L_x^2}dt'\bigg)^4\Bigg)^{1/4}\\
    &\lesssim T^{1-\be(\al)}
     \Bigg(\sum_{N\gg1}\sum_{j}\om_N^4\int_{I_{j,N}}
     \|D_x^s P_N   \tilde{f}(t')\|_{L_x^2}^4dt'\Bigg)^{1/4}\\
    &\lesssim T^{1-\be(\al)} \Bigg(\int_0^T \sum_{N\gg1}\om_N^4     \|D_x^s P_N   \tilde{f}(t')\|_{L_x^2}^4dt'\Bigg)^{1/4}\\
    &\lesssim T^{1/4+(1-\be(\al))}\| \tilde{f}(u)\|_{L_T^\I H_\om^s}
    \lesssim T^{1/4+(1-\be(\al))}
     G(\|u\|_{L^\I_{T,x}})\|u\|_{L_T^\I H_\om^s}.
  }
This concludes the proof of  \eqref{eq_stri1} by using  that $ \beta(\alpha) \le 1/8 $ for $ 1\le \alpha\le 2 $.
  To prove \eqref{eq_stri2}, we notice that
  the Bernstein inequality in space and the H\"older inequality in time give
  \EQQS{
    \|D_x^{1/3}P_N u\|_{L^3(I_{j,N}; L_x^\infty)}
    \lesssim |I_{j,N}|^{1/12}\|D_x^{7/12}P_N u\|_{L^4(I_{j,N}; L_x^4)}
    \lesssim T^{1/12} \|D_x^{1/2}P_N u\|_{L^4(I_{j,N}; L_x^4)},
  }
  where $|I_{j,N}|\sim T N^{-1}$ and $\# J\sim N$.
  This implies that
  \EQQS{
    \Bigg(\sum_{N\gg1}\|D_x^{1/3}P_N u\|_{L_{T}^3 L_x^\I}^3\Bigg)^{1/3}
    \lesssim T^{1/12} \Bigg(\sum_{N\gg1} \sum_j \|D_x^{1/2}P_N u\|_{L^4(I_{j,N}; L_x^4)}^3\Bigg)^{1/3}.
  }
  We can then  obtain \eqref{eq_stri2} by the same way as \eqref{eq_stri1} with
$\om_N=1$.
\end{proof}

For the estimate for the difference, we shall use the following estimates.

\begin{cor}\label{cor_stri2}
  Assume that $s>1/2$.
  Let $\al\in[1,2]$ and $0<T<1$.
  Let $u,v \in C([0,T]; H^{s}(\T))$ satisfy \eqref{eq1}--\eqref{initial} with $u_0,v_0\in H^s(\T)$ on $[0,T]$, respectively.
  Then,
  \EQS{\label{eq_stri3}
  \begin{aligned}
    \Bigg(\sum_{N} [(1\vee N)^{s-1-\be(\al)}\|P_N w\|_{L_{T,x}^4}]^4\Bigg)^{1/4}
    \lesssim T^{1/8} G(K) \|w\|_{L_T^\I H_x^{s-1}}
  \end{aligned}
  }
  and
  \EQS{\label{eq_stri4}
  \Bigg(\sum_{N} [(1\vee N)^{-5/12}\|P_N w\|_{L_{T}^3 L_x^4}]^3\Bigg)^{1/3}
  \lesssim T^{5/24}G(K)\|w\|_{L_T^\I H_x^{-5/12+\be(\al)}},
  }
  where $w=u-v$, $K=\|u\|_{L_T^\I H_x^{s}}+\|v\|_{L_T^\I H_x^{s}}$ and $G=G[f]$ is an entire function that is increasing
and non negative on $\R_+$.
\end{cor}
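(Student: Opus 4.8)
The plan is to transpose the proof of Proposition \ref{prop_stri} to the difference $w=u-v$. Subtracting the two copies of \eqref{eq1}, one sees that $w$ solves the \emph{linear} equation $\p_t w+L_{\al+1}w+\p_x(f(u)-f(v))=0$ whose source term is already in divergence form, so no symmetrization is needed and the whole argument is driven by the Duhamel formula. Compared with Proposition \ref{prop_stri} there are only three changes: I would work at the lowered regularity ($s-1$ for \eqref{eq_stri3} and $\si:=-5/12+\be(\al)$ for \eqref{eq_stri4}), take $\om_N\equiv1$, and replace the nonlinear factor $\tilde{f}(u)$ by $f(u)-f(v)$, controlled by the Lipschitz-type bound \eqref{eq2.3ana} rather than by \eqref{eq2.2ana}. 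Since $u,v\in C([0,T];H^s(\T))$, so does $w$, which guarantees the continuity in time of $\|D_x^{s-1}P_Nw(t)\|_{L^2}$ and of $\|P_Nw(t)\|_{L^2}$ and hence the existence of the minimizing points $c_{j,N}$ used below.

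For \eqref{eq_stri3} I would keep the same partition $\{I_{j,N}\}$ of $[0,T]$ into intervals of length $|I_{j,N}|\sim N^{-1}$ and, on each of them, split the Duhamel representation of $P_Nw$ into its free evolution and its forced part. The free part is handled exactly as in \eqref{eq_stri1}: Lemma \ref{lem_stri2}, the identity $|I_{j,N}|^{2-4b(\al)}N^{-4\be(\al)}\sim|I_{j,N}|$, the choice of $c_{j,N}$ at a minimum of $\|D_x^{s-1}P_Nw\|_{L^2}^4$, and $l^2(\N)\hookrightarrow l^4(\N)$ together yield the bound $T^{1/4}\|w\|_{L_T^\I H^{s-1}}$. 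For the forced part the point is that $D_x^{s-1-\be(\al)}P_N\p_x$ has exactly the same homogeneity as the operator $D_x^{s-\be(\al)}P_N\p_x$ of \eqref{eq_stri1} with $s$ lowered by one; the bookkeeping that produces the coefficient $|I_{j,N}|^{-3/4}$ there then goes through unchanged and leaves me with $T^{1/4}\|f(u)-f(v)\|_{L_T^\I H^{s-1}}$, which \eqref{eq2.3ana} and the monotonicity of $G$ bound by $T^{1/4}G(K)\|w\|_{L_T^\I H^{s-1}}$.

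For \eqref{eq_stri4} I would first use H\"older in time on each $I_{j,N}$ to pass from $L_T^3L_x^4$ to $L_{T,x}^4$, writing $(1\vee N)^{-5/12}\|P_Nw\|_{L^3(I_{j,N};L_x^4)}\lec(1\vee N)^{-1/2}\|P_Nw\|_{L^4(I_{j,N};L_x^4)}$; here, unlike in \eqref{eq_stri2}, no spatial Bernstein step is needed because the target is $L_x^4$. Then I would rerun the Duhamel argument of \eqref{eq_stri3} with the weight $(1\vee N)^{-1/2}$, with $l^3$ in place of $l^4$ in the $j$- and $N$-summations (using $l^2\hookrightarrow l^3$), and with $c_{j,N}$ minimizing $\|P_Nw\|_{L^2}^3$. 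Tracking the powers of $N$ then produces $T^{1/3}\|w\|_{L_T^\I H^{\si}}$ from the free part and $T^{1/3}\|f(u)-f(v)\|_{L_T^\I H^{\si}}$ from the forced part.

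The one genuinely new step, and the place I expect the only real difficulty, is the nonlinear estimate for the forced part of \eqref{eq_stri4}, since it must be performed at the \emph{negative} regularity $\si=-5/12+\be(\al)$ rather than at $H^{s-1}$, so that \eqref{eq2.3ana} does not apply directly (indeed $H^{s-1}$ need not even embed into $H^\si$ when $s$ is close to $1/2$). The key observation is that for $\al\in[1,2]$ one has $\si\in(-1/2,0)$, so that \eqref{eq2.3} applies with $(s_1,s_2,s_3)=(s,\si,\si)$ (the hypotheses $s+\si>0$, $\si\le s$ and $\si<s+\si-1/2\Leftrightarrow s>1/2$ all hold) and gives $\|hg\|_{H^\si}\lec\|h\|_{H^s}\|g\|_{H^\si}$. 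Expanding $f(u)-f(v)=\sum_{k\ge1}\frac{f^{(k)}(0)}{k!}\sum_{j=0}^{k-1}u^jv^{k-1-j}(u-v)$, estimating each $u^jv^{k-1-j}$ in the algebra $H^s$ and $u-v=w$ in $H^\si$, and summing the entire series exactly as in the proof of \eqref{eq2.3ana}, I would obtain $\|f(u)-f(v)\|_{H^\si}\le G(K)\|w\|_{H^\si}$, which closes \eqref{eq_stri4}. Everything else is the homogeneity bookkeeping of Proposition \ref{prop_stri}, now carried out at the lowered regularities and with $l^3$ in place of $l^4$.
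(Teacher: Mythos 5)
Your proposal is correct and follows essentially the same route as the paper, whose proof of this corollary is the one-line remark that one repeats the argument of Proposition \ref{prop_stri} with \eqref{eq2.3ana} in place of \eqref{eq2.2ana}; your bookkeeping of the weights, the $l^3$ summations and the minimizing points $c_{j,N}$ faithfully reproduces that argument at the lowered regularities. Your extra observation — that for \eqref{eq_stri4} the source term must be estimated in $H^{-5/12+\be(\al)}$, which does not follow verbatim from \eqref{eq2.3ana} when $s$ is close to $1/2$ but does follow from \eqref{eq2.3} with $(s_1,s_2,s_3)=(s,\si,\si)$ — is a correct filling-in of a detail the paper glosses over.
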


\begin{proof}
  The proof is the same as that of Proposition \ref{prop_stri}, but with using \eqref{eq2.3ana} instead of \eqref{eq2.2}.
\end{proof}

\section{A priori estimate}
\subsection{Preliminary technical estimates}

Let us denote by $\1_T$ the characteristic function of the interval $]0,T[$.
As pointed out in \cite{MV15}, $\1_T$ does not commute with $Q_L$.
To avoid this difficulty, following \cite{MV15}, we further decompose $\1_T$ as
\EQS{
  \1_T=\1_{T,R}^{\textrm{low}}+\1_{T,R}^{\textrm{high}},
  \quad \textrm{with} \quad
  \F_t(\1_{T,R}^{\textrm{low}})(\ta)=\chi(\ta/R)\F_t(\1_T)(\ta),
}
for some $R>0$ to be fixed later.

\begin{rem}
  In the proofs of Propositions \ref{prop_apri} and \ref{difdif}, since  we integrate on an interval of time $ ]0,t[ $ with $ 0<t<T $, we will  put the cutoff function $\1_t$ on two functions of the integral.
  Actually, it is enough to put $\1_t$ on one function if one merely seeks the commutativity of $ Q_L $ with  the low frequency part of  $\1_t$  such as in \eqref{eq4.6}.
  The advantage of putting $\1_t$ on two functions is that
  one can get a positive power of $T$ in the right hand side in \eqref{P} and \eqref{eq_difdif}, which enables us to bypass the scaling argument in the proof of the well-posedness.  This is particulary comfortable here since \eqref{eq1} is in general not scaling invariant.
\end{rem}

\begin{lem}[Lemma 3.5 in \cite{MPV19}]
  Let $1\le p\le \I$.
  Let $L$ be a nonhomogeneous dyadic number.
  Then the operator $Q_{\le L}$ is bounded in $L_t^{p} L_x^2 $ uniformoly in $L$.
  In other words,
  \EQS{\label{eq4.3}
    \|Q_{\le L}u\|_{L_t^{p} L_x^2}\lesssim \|u\|_{L_t^{p} L_x^2},
  }
  for all $u\in L_t^{p} L_x^2$ and the implicit constant appearing in \eqref{eq4.3} does not depend on $L$.
\end{lem}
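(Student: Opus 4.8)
The plan is to compute explicitly the Fourier multiplier of $Q_{\le L}$ and to recognise it as a convolution operator in the time variable whose kernel has an $L^1_t$--norm independent of $L$. First I would telescope the dyadic sum. Since $\phi(\x)=\chi(\x)-\chi(2\x)$ and $\phi_0(\x)=\chi(2\x)$, the symbols $\phi_K$ form a partition of unity, so for $L=2^m\ge 1$ the multiplier of $Q_{\le L}=\sum_{K\le L}Q_K$ is the telescoping sum
\[
\sum_{K\le L}\phi_K(\ta-p_{\al+1}(\x))
=\chi\Big(\frac{\ta-p_{\al+1}(\x)}{L}\Big),
\]
and the case $L=0$ is identical up to rescaling $\chi$. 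Thus, for each fixed spatial frequency $\x$, the operator $Q_{\le L}$ acts as a Fourier multiplier in $\ta$ with symbol $\chi((\ta-p_{\al+1}(\x))/L)$.

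Next I would pass to the physical time variable. The operator $Q_{\le L}$ is convolution in $t$ against the kernel
\[
K_L(t,\x):=\F_t^{-1}\Big[\chi\big((\cdot-p_{\al+1}(\x))/L\big)\Big](t)
=e^{itp_{\al+1}(\x)}\,L\,\check{\chi}(Lt),
\]
where $\check{\chi}=\F_t^{-1}\chi$. The crucial observation is that the modulation factor $e^{itp_{\al+1}(\x)}$ has modulus one, so $|K_L(t,\x)|=L|\check{\chi}(Lt)|$ is independent of $\x$, and by the change of variable $t\mapsto Lt$ its $L^1_t$--norm equals $\|\check{\chi}\|_{L^1(\R)}$, which is finite (since $\chi\in C_0^\I$ forces $\check{\chi}$ to be Schwartz) and independent of $L$.

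Then I would conclude by Minkowski's integral inequality. Writing $\widehat{Q_{\le L}u}(t,\x)=\int_\R K_L(t-t',\x)\widehat{u}(t',\x)\,dt'$ and using Plancherel in $x$,
\[
\|Q_{\le L}u(t)\|_{L_x^2}
\le \int_\R L|\check{\chi}(L(t-t'))|\,\|u(t')\|_{L_x^2}\,dt'
\le \|\check{\chi}\|_{L^1}\,\|u\|_{L_t^\I L_x^2},
\]
uniformly in $t$, which is exactly \eqref{eq4.3}.

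I expect the only genuinely delicate point to be the very first structural step: because $Q_{\le L}$ is defined through the full space-time Fourier transform, it looks like a space-time multiplier, and such operators are not in general bounded on the mixed norm $L_t^\I L_x^2$ (Plancherel in time being unavailable). The whole argument hinges on disentangling the $\x$--dependent phase $p_{\al+1}(\x)$ from the time localisation $\chi(\cdot/L)$, so that the operator becomes a genuine convolution in $t$ whose kernel has $\x$--independent modulus; equivalently, one may conjugate by the unitary group $U_\al(t)$ to reduce to the flat symbol $\chi(\ta/L)$ and then use the unitarity of $U_\al(t)$ on $L_x^2$. Once this point is made, the uniform bound follows immediately from the scaling invariance of $\|\check{\chi}(L\cdot)\|_{L^1}$.
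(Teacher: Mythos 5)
Your proof is correct and is essentially the standard argument: the paper itself gives no proof but cites [MPV19, Lemma 3.5], whose proof likewise rests on telescoping the dyadic symbols to $\chi((\ta-p_{\al+1}(\xi))/L)$ and recognising $Q_{\le L}$ (after factoring out the unimodular phase $e^{itp_{\al+1}(\xi)}$, i.e.\ conjugating by the free evolution) as a time convolution with a kernel whose $L^1_t$-norm is $\|\check{\chi}\|_{L^1}$, independent of $L$. Your identification of the delicate point --- that a genuine space-time multiplier need not be bounded on $L_t^\I L_x^2$, so one must disentangle the $\xi$-dependent phase first --- is exactly the right one.
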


\begin{lem}[Lemma 3.6 in \cite{MPV19}]
  For any $R>0$ and $T>0$, it holds
  \EQS{\label{eq4.1}
    \|\1_{T,R}^{\textrm{high}}\|_{L^1}\lesssim T\wedge R^{-1},
  }
  and
  \EQS{\label{eq4.2}
    \|\1_{T,R}^{\textrm{high}}\|_{L^\I}
    +\|\1_{T,R}^{\textrm{low}}\|_{L^\I}\lesssim 1.
  }
\end{lem}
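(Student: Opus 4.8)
The plan is to pass to the physical side and exploit the convolution structure of the decomposition. Writing $K_R:=\F_t^{-1}(\chi(\cdot/R))$, the defining relation $\F_t(\1_{T,R}^{\textrm{low}})=\chi(\cdot/R)\F_t(\1_T)$ becomes $\1_{T,R}^{\textrm{low}}=\1_T * K_R$. A change of variables gives $K_R(t)=R\,\check\chi(Rt)$ with $\check\chi:=\F_t^{-1}\chi$, so that $K_R$ is an $L^1$-normalized dilation of a fixed Schwartz function; in particular $\|K_R\|_{L^1}=\|\check\chi\|_{L^1}=:c$ is independent of $R$, and $\int_\R K_R=\chi(0)=1$.

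The two $L^\I$ bounds in \eqref{eq4.2} are then immediate. Since $\1_{T,R}^{\textrm{low}}=\1_T * K_R$, Young's inequality yields $\|\1_{T,R}^{\textrm{low}}\|_{L^\I}\le \|\1_T\|_{L^\I}\|K_R\|_{L^1}=c$, and then $\|\1_{T,R}^{\textrm{high}}\|_{L^\I}\le \|\1_T\|_{L^\I}+\|\1_{T,R}^{\textrm{low}}\|_{L^\I}\le 1+c$, both $\lesssim 1$.

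For \eqref{eq4.1} the point is to obtain the bounds by $T$ and by $R^{-1}$ simultaneously. Using $\int_\R K_R=1$, I would write
\[
\1_{T,R}^{\textrm{high}}(t)=\1_T(t)-(\1_T * K_R)(t)=\int_\R\big(\1_T(t)-\1_T(t-s)\big)K_R(s)\,ds,
\]
and estimate, by Fubini,
\[
\|\1_{T,R}^{\textrm{high}}\|_{L^1}\le \int_\R|K_R(s)|\,\|\1_T-\tau_s\1_T\|_{L^1}\,ds,
\]
where $\tau_s$ denotes translation by $s$. A direct computation of the symmetric difference of $]0,T[$ and $]s,s+T[$ gives $\|\1_T-\tau_s\1_T\|_{L^1}=2\min(|s|,T)$. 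Feeding this in, the two elementary choices $\min(|s|,T)\le T$ and $\min(|s|,T)\le |s|$ give both bounds at once: the first produces $\|\1_{T,R}^{\textrm{high}}\|_{L^1}\le 2T\|K_R\|_{L^1}=2cT\lesssim T$, while the second, together with the first-moment scaling $\int_\R|s|\,|K_R(s)|\,ds=R^{-1}\int_\R|u|\,|\check\chi(u)|\,du=R^{-1}\|u\check\chi\|_{L^1}$ (finite since $\check\chi$ is Schwartz), produces $\|\1_{T,R}^{\textrm{high}}\|_{L^1}\lesssim R^{-1}$. Combining gives $\lesssim T\wedge R^{-1}$.

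The main obstacle, and the only place where cancellation is genuinely needed, is the $R^{-1}$ bound: a naive triangle inequality recovers only the $T$ bound. The mean-value/translation trick above, which uses $\int_\R K_R=1$ to rewrite $\1_{T,R}^{\textrm{high}}$ as an average of increments of $\1_T$, is exactly what converts the decay of the first moment of $\check\chi$ under dilation into the gain $R^{-1}$.
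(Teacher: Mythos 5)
Your proof is correct, and since the paper itself gives no argument here (it simply cites Lemma 3.6 of \cite{MPV19}), your write-up supplies exactly the standard proof behind that citation: the identification $\1_{T,R}^{\textrm{low}}=\1_T * K_R$ with $K_R$ an $L^1$-normalized dilation of mass one, Young's inequality for \eqref{eq4.2} and the trivial $T$-bound, and the increment identity $\1_{T,R}^{\textrm{high}}(t)=\int_\R(\1_T(t)-\1_T(t-s))K_R(s)\,ds$ combined with $\|\1_T-\tau_s\1_T\|_{L^1}=2\min(|s|,T)$ and the first moment of $\check\chi$ to get the $R^{-1}$ gain. All steps check out, including the computation of the symmetric difference and the use of $\int_\R K_R=\chi(0)=1$.
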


\begin{lem}[Lemma 3.7 in \cite{MPV19}]
  Assume that $T>0$, $R>0$, and $L\gg R$.
  Then, it holds
  \EQS{\label{eq4.6}
    \|Q_L(\1_{T,R}^{\textrm{low}}u)\|_{L_{t,x}^2}
    \lesssim \|Q_{\sim L} u\|_{L_{t,x}^2},
  }
  for all $u\in L^2(\R_t\times\T_x)$.
\end{lem}

It is well-known that the resonance function ($\Om_{k+1}$ defined as below) with respect to higher order nonlinear terms such as $\p_x(u^{k+1})$ can be resonant (i.e., very small). In what follows, we clarify non-resonant contributions in which we can recover the derivative loss by  using a priori estimates in Bourgain's spaces of solution to \eqref{eq1} proved in Lemma \ref{lem1}.

\begin{defn}
  Let $j\in\N$.
  We define $\Om_j(\xi_1,\dots,\xi_{j+1}):\Z^{j+1}\to\R$ as
  \EQQS{
    \Om_j(\xi_1,\dots,\xi_{j+1})
    :=\sum_{n=1}^{j+1}p_{\al+1}(\xi_{n})
  }
  for $(\xi_1,\dots,\xi_{j+1})\in\Z^{j+1}$, where $p_{\al+1}$ satisfies Hypothesis 1.
\end{defn}

\begin{lem}\label{lem_res1}
  Let $k\ge 1 $ and $(\xi_1,\dots,\xi_{k+2})\in\Z^{k+2}$ satisfy $\sum_{j=1}^{k+2}\xi_j=0$.
  Assume that $|\xi_1|\sim |\xi_2|\gtrsim |\xi_3| $ if $ k= 1 $ or $|\xi_1|\sim |\xi_2|\gtrsim |\xi_3|\gg  k  \max_{j\ge4 }|\xi_j|$ if $ k\ge 2 $.
  Then,
  \EQQS{
    |\Om_{k+1}(\xi_1,\dots,\xi_{k+2})|\gtrsim |\xi_3||\xi_1|^{\al}
  }
  for $|\xi_1|\gg (\max_{\xi\in[0,\xi_0]}|p_{\al+1}'(\xi)|)^{1/\al}$.
\end{lem}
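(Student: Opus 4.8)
The plan is to reduce the whole statement to a lower bound for the pure three–wave resonance $p(\xi_1)+p(\xi_2)+p(\xi_3)$ and to read that bound off from the convexity information $p''_{\al+1}(\xi)\sim\xi^{\al-1}>0$ (for $\xi\ge\xi_0$, using $\al\ge1$) contained in Hypothesis~\ref{hyp1}. Write $p=p_{\al+1}$. Since $p$ is odd, $\Om_{k+1}$ is odd under the global sign flip $\xi_j\mapsto-\xi_j$, so I may assume $\xi_1>0$.

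First treat $k=1$, where $\xi_1+\xi_2+\xi_3=0$. Using oddness to eliminate one frequency and the fundamental theorem of calculus I record the two representations
\[
\Om_2=\int_0^{\xi_3}\big[p'(u)-p'(u+\xi_1)\big]\,du=-\int_0^{\xi_1}\int_0^{\xi_2}p''(s+t)\,dt\,ds .
\]
If the two dominant frequencies have the same sign, i.e. $\xi_2>0$ (so $\xi_3<0$ and $|\xi_3|=\xi_1+\xi_2\sim\xi_1$), I use the double integral: on $[\xi_1/2,\xi_1]\times[\xi_2/2,\xi_2]$ one has $s+t\ge\xi_1/2\gg\xi_0$, hence $p''(s+t)\sim(s+t)^{\al-1}\gtrsim\xi_1^{\al-1}$, and integrating over this set of area $\sim\xi_1\xi_2$ yields $|\Om_2|\gtrsim\xi_1\xi_2\,\xi_1^{\al-1}\sim|\xi_3|\xi_1^\al$.

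The crux is the opposite–sign case $\xi_2<0$ (the genuine high–high–low, near–resonant interaction), for which I use the single integral; the point is to keep the inner integration on the positive half–axis, where $p''$ does not change sign. Indeed $u+\xi_1$ runs between $\xi_1$ and $-\xi_2=|\xi_2|$, both $\sim\xi_1$ by $|\xi_1|\sim|\xi_2|$. If $\xi_3\ge0$, then $u\ge0$ and $p'(u+\xi_1)-p'(u)=\int_u^{u+\xi_1}p''\gtrsim\xi_1^\al$ for every such $u$ (restrict to the upper half, where $v\gtrsim\xi_1$), giving $|\Om_2|\gtrsim\xi_3\xi_1^\al$ directly. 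If $\xi_3<0$, substituting $u=-w$ turns the integrand into $I(w)=p'(\xi_1-w)-p'(w)=\int_w^{\xi_1-w}p''(v)\,dv$, whose integration interval $[w,\xi_1-w]$ stays in $(0,\xi_1)$; $I$ is nonnegative and decreasing, and the pointwise estimate $I(|\xi_3|/2)=\int_{|\xi_3|/2}^{\xi_1-|\xi_3|/2}p''\gtrsim\xi_1^\al$ holds (the interval has length $|\xi_2|\sim\xi_1$ and its upper half sits at $v\gtrsim\xi_1$). Since $I$ decreases, $|\Om_2|=\int_0^{|\xi_3|}I\ge\tfrac{|\xi_3|}{2}\,I(|\xi_3|/2)\gtrsim|\xi_3|\xi_1^\al$. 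This monotonicity/good–half step is what replaces the (false) pointwise lower bound near $w=|\xi_3|$, and the hypotheses $|\xi_1|\sim|\xi_2|$ and $|\xi_3|\lesssim|\xi_1|$ are exactly what guarantee an interval of length $\sim\xi_1$ sitting at heights $v\gtrsim\xi_1$. I regard this opposite–sign configuration as the only genuinely delicate point.

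Finally, for $k\ge2$ I split $\Om_{k+1}=\big[p(\xi_1)+p(\xi_2)+p(\xi_3)\big]+\sum_{j=4}^{k+2}p(\xi_j)$ and treat the tail perturbatively. Setting $\sigma=\sum_{j\ge4}\xi_j$, the three leading frequencies satisfy $\xi_1+\xi_2+(\xi_3+\sigma)=0$ with $|\sigma|\le k\max_{j\ge4}|\xi_j|\ll|\xi_3|$, so the $k=1$ analysis applied to $(\xi_1,\xi_2,\xi_3+\sigma)$ produces a bracket $\gtrsim|\xi_3+\sigma|\xi_1^\al\sim|\xi_3|\xi_1^\al$ (note $|\xi_1|\sim|\xi_2|\gtrsim|\xi_3+\sigma|$ still holds). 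The discrepancy $|p(\xi_3)-p(\xi_3+\sigma)|\lesssim|\sigma||\xi_3|^\al\ll|\xi_3|^{\al+1}$ and the tail $\sum_{j\ge4}|p(\xi_j)|\lesssim k\,(\max_{j\ge4}|\xi_j|)^{\al+1}\ll|\xi_3|^{\al+1}$ are both negligible compared with $|\xi_3|\xi_1^\al\ge|\xi_3|^{\al+1}$, thanks to the gap $|\xi_3|\gg k\max_{j\ge4}|\xi_j|$ and to $\max_{j\ge4}|\xi_j|\lesssim\xi_1$. Collecting the estimates gives $|\Om_{k+1}|\gtrsim|\xi_3|\xi_1^\al$; apart from the near–resonant subcase above, everything is bookkeeping with Hypothesis~\ref{hyp1}.
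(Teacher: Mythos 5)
Your overall strategy coincides with the paper's: reduce to the three dominant frequencies, treat the tail $\sum_{j\ge4}p_{\al+1}(\xi_j)$ perturbatively using $|\xi_3|\gg k\max_{j\ge4}|\xi_j|$, and extract the lower bound from the mean value theorem together with the sign-definiteness of $p_{\al+1}''$ on $[\xi_0,\infty)$. The case split by the sign of $\xi_2$ matches the paper's split into $|\xi_2|\gg|\xi_3|$ versus $|\xi_2|\sim|\xi_3|$, and the tail estimates are the same bookkeeping. However, two points in your execution are genuinely incomplete. First, you never address the region where the integration variable lies in $[0,\xi_0]$, where Hypothesis~\ref{hyp1} gives no sign information on $p_{\al+1}''$ (it is only assumed $C^2$ away from $0$, with $p_{\al+1}''\sim\xi^{\al-1}$ for $\xi\ge\xi_0$). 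In your same-sign case the lower bound over the sub-rectangle $[\xi_1/2,\xi_1]\times[\xi_2/2,\xi_2]$ only yields a bound on $|\Om_2|$ if the contribution of $\{s+t<\xi_0\}$ cannot cancel it, and the monotonicity of $I(w)=p_{\al+1}'(\xi_1-w)-p_{\al+1}'(w)$ fails on $[0,\xi_0]$ for the same reason. This is precisely what the hypothesis $|\xi_1|\gg(\max_{[0,\xi_0]}|p_{\al+1}'|)^{1/\al}$ is there for: one must isolate the piece near the origin and bound it by $\xi_0\max_{[0,\xi_0]}|p_{\al+1}'|\ll|\xi_3||\xi_1|^\al$, as the paper does by starting its $\theta$-integral at $\xi_0$ and estimating the leftover boundary terms separately. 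Your write-up omits this step entirely.

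Second, in the subcase $\xi_3<0$ your chain $\int_0^{|\xi_3|}I\ge\tfrac{|\xi_3|}{2}I(|\xi_3|/2)$ rests on the claim that $I$ is nonnegative on $[0,|\xi_3|]$. Since $I(w)=\int_w^{\xi_1-w}p_{\al+1}''$ vanishes at $w=\xi_1/2$ and becomes negative beyond it, this fails whenever $|\xi_3|>\xi_1/2$ — a configuration permitted by the hypothesis $|\xi_3|\lesssim|\xi_1|$ (it occurs exactly when $\xi_2$ and $\xi_3$ are both opposite in sign to $\xi_1$ and all three frequencies are comparable). There $\int_{|\xi_3|/2}^{|\xi_3|}I$ can be negative and of the same order $|\xi_1|^{\al+1}$ as the main term, so monotonicity alone does not close the estimate. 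The conclusion is still true and recoverable — e.g.\ via the exact antisymmetry $I(\xi_1/2+s)=-I(\xi_1/2-s)$, which gives $\int_0^{|\xi_3|}I=\int_0^{|\xi_2|}I$ with $|\xi_2|=\xi_1-|\xi_3|\le\xi_1/2$, after which your argument applies — or by folding this configuration into the all-comparable case; the paper avoids the issue altogether by normalizing (after a global sign flip) to $\xi_2,\xi_3>0$ in its Case 2. As written, though, this corner case is a gap.
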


\begin{proof}
  First we consider the case $k\ge 2$.
  We separate different cases:

  \noindent
  \textbf{Case 1:} $|\xi_2|\gg|\xi_3|$.
  By hypothesis, it follows that $|\xi_2|\gg \xi_0$.
  The mean value theorem implies that there exists $\eta\in\R$ such that $|\eta|\sim |\xi_2|$ and that
  \EQQS{
    |p_{\al+1}(\xi_2+\cdots+\xi_{k+2})-p_{\al+1}(\xi_2)|
    =|\xi_3+\cdots+\xi_{k+2}||p_{\al+1}'(\eta)|
    \sim |\xi_3||\xi_1|^{\al}.
  }
  Here, we used $|\xi_1|\sim|\xi_2|$ and $|\xi_3|\gg  k  \max_{j\ge4 }|\xi_j|$.
  Now, if $|\xi_j|\le \xi_0$ for $j\ge4$, then
  \EQQS{
    |p_{\al+1}(\xi_j)|
    \le |\xi_j|\max_{\xi\in[0,\xi_0]}|p_{\al+1}'(\xi)|
    \ll \frac{|\xi_3||\xi_1|^\al}{k}
  }
  since $p_{\al+1}(0)=0$.
  On the other hand, if $|\xi_j|\ge\xi_0$ for $j\ge 4$, then
  \EQQS{
    |p_{\al+1}(\xi_j)|
    &\le |p_{\al+1}(\xi_j)-p_{\al+1}(\xi_0)|+|p_{\al+1}(\xi_0)|\\
    &\lesssim |\xi_0|\max_{\xi\in[0,\xi_0]}|p_{\al+1}'(\xi)|
      +|\xi_j|^{\al+1}\ll \frac{|\xi_3||\xi_1|^\al}{k}.
  }
  Similarly, we can get $|p_{\al+1}(\xi_3)|\ll |\xi_3||\xi_1|^\al$.
  Gathering these estimates leads to $|\Om_{k+1}|\gtrsim |\xi_3||\xi_1|^\al$.

  \noindent
  \textbf{Case 2:} $|\xi_2|\sim |\xi_3|$.
  Then we have $|\xi_3|\gg\xi_0$.
  By impossible interactions, $\xi_1,\xi_2$ and $\xi_3$ do not have the same sign.
  By the symmetry and $|\xi_1|\sim|\xi_2|\sim|\xi_3|$, it suffices to consider the case $\xi_2,\xi_3>0$.
  We notice that
  \EQQS{
    -\Om_{k+1}
    &=\int_{\xi_0}^{\xi_2}(p_{\al+1}'(\theta+\xi_3+\cdots+\xi_{k+2})-p_{\al+1}'(\theta))d\theta\\
    &\quad+p_{\al+1}(\xi_0+\xi_3+\cdots+\xi_{k+2})
      -p_{\al+1}(\xi_3)-p_{\al+1}(\xi_0)
      -\sum_{j=4}^{k+2}p_{\al+1}(\xi_j)
  }
  with
  \EQQS{
    |p_{\al+1}(\xi_0+\xi_3+\cdots+\xi_{k+2})
      -p_{\al+1}(\xi_3)|
    \lesssim (|\xi_0|+k\max_{j\ge4}|\xi_j|)|\xi_3|^\al
    \ll |\xi_3||\xi_1|^\al
  }
  and
  \EQQS{
    p_{\al+1}'(\theta+\xi_3+\cdots+\xi_{k+2})-p_{\al+1}'(\theta)
    =\int_0^{\xi_3+\cdots+\xi_{k+2}}p_{\al+1}''(\theta+\mu)d\mu.
  }
  For $\theta\ge\xi_0$, $p_{\al+1}''$ does not change sign since $|p_{\al+1}''(\theta)|\sim|\theta|^{\al-1}$ and $p_{\al+1}''$ is continuous outside $0$.
  Therefore, for $\theta\in[\xi_0,\xi_2]$, we get
  \EQQS{
    \int_0^{\xi_3+\cdots+\xi_{k+2}}p_{\al+1}''(\theta+\mu)d\mu
    \sim \int_0^{\xi_3+\cdots+\xi_{k+2}}(\theta+\mu)^{\al-1}d\mu
    \sim \xi_3^\al
  }
  since $\xi_3\gg k \max_{j\ge 4}|\xi_4|$.
  Gathering these estimates, we obtain $|\Om_{k+1}|\gtrsim |\xi_3||\xi_1|^\al$.

  For the case $k=1$, we can argue exactly as above.
\end{proof}

\begin{lem}\label{lem_res2}
  Let $ k\ge 2 $ and $(\xi_1,\dots,\xi_{k+2})\in\Z^{k+2}$ satisfy $\sum_{j=1}^{k+2}\xi_j=0$.
  Assume that  $|\xi_1|\sim |\xi_2|\gg  |\xi_3|\gtrsim |\xi_4|$  if $ k=2 $ or $|\xi_1|\sim |\xi_2|\gg   |\xi_3|\gtrsim  |\xi_4| $ with $  |\xi_3+\xi_4|\gg k  \displaystyle \max_{j\ge5} |\xi_j|$ if $ k\ge 3$.
  Then,
  \EQQS{
    |\Om_{k+1}(\xi_1,\dots,\xi_{k+2})|
    \gtrsim |\xi_3+\xi_4||\xi_1|^\al
  }
  for $|\xi_1|\gg (\max_{\xi\in[0,\xi_0]}|p_{\al+1}'(\xi)|)^{1/\al}$.
\end{lem}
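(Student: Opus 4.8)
The plan is to isolate the two genuinely high-frequency terms $p_{\al+1}(\xi_1)+p_{\al+1}(\xi_2)$, to show that they alone produce the asserted lower bound, and then to check that every remaining term is negligible by comparison. Throughout I would write $c_0:=\max_{\xi\in[0,\xi_0]}|p_{\al+1}'(\xi)|$; the whole point of the threshold $|\xi_1|\gg c_0^{1/\al}$ is that it lets one absorb $c_0$ into $|\xi_1|^\al$.

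First I would extract the main term. Put $S:=\xi_3+\cdots+\xi_{k+2}$, so that the constraint $\sum_j\xi_j=0$ reads $\xi_1=-\xi_2-S$. Since $p_{\al+1}$ is odd,
\[
p_{\al+1}(\xi_1)+p_{\al+1}(\xi_2)=p_{\al+1}(\xi_2)-p_{\al+1}(\xi_2+S).
\]
As $|S|\lesssim|\xi_3|\ll|\xi_2|\sim|\xi_1|$, the mean value theorem writes this difference as $-S\,p_{\al+1}'(\eta)$ with $|\eta|\sim|\xi_1|\ge\xi_0$, hence of size $|S|\,|\xi_1|^\al$. It then remains to observe that $|S|\sim|\xi_3+\xi_4|$: for $k=2$ this is an identity, and for $k\ge3$ the hypothesis $|\xi_3+\xi_4|\gg k\max_{j\ge5}|\xi_j|$ forces $\big|\sum_{j\ge5}\xi_j\big|\le(k-2)\max_{j\ge5}|\xi_j|\ll|\xi_3+\xi_4|$. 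This yields $|p_{\al+1}(\xi_1)+p_{\al+1}(\xi_2)|\sim|\xi_3+\xi_4|\,|\xi_1|^\al$.

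Next I would bound the errors, using throughout the elementary pointwise estimate $|p_{\al+1}(\xi_j)|\lesssim|\xi_j|(c_0+|\xi_j|^\al)$, valid for every integer by integrating $p_{\al+1}'$ from $0$. For the pair $\xi_3,\xi_4$: if they share a sign then $|\xi_3+\xi_4|=|\xi_3|+|\xi_4|\gtrsim|\xi_3|$ and the triangle inequality already gives $|p_{\al+1}(\xi_3)+p_{\al+1}(\xi_4)|\lesssim|\xi_3+\xi_4|(c_0+|\xi_3|^\al)$; if they have opposite signs, oddness and the mean value theorem give $p_{\al+1}(\xi_3)+p_{\al+1}(\xi_4)=(\xi_3+\xi_4)p_{\al+1}'(\eta')$ with $|\eta'|\lesssim|\xi_3|$, so the same bound holds. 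Since $|\xi_3|\ll|\xi_1|$ we have $c_0+|\xi_3|^\al\ll|\xi_1|^\al$, so this contribution is $\ll|\xi_3+\xi_4|\,|\xi_1|^\al$. For the tail (present only when $k\ge3$) the same pointwise bound together with $\sum_{j\ge5}|\xi_j|\le(k-2)\max_{j\ge5}|\xi_j|\ll|\xi_3+\xi_4|$ gives $\sum_{j\ge5}|p_{\al+1}(\xi_j)|\lesssim\big(\sum_{j\ge5}|\xi_j|\big)(c_0+|\xi_1|^\al)\ll|\xi_3+\xi_4|\,|\xi_1|^\al$. Combining these through $|\Om_{k+1}|\ge|p_{\al+1}(\xi_1)+p_{\al+1}(\xi_2)|-|p_{\al+1}(\xi_3)+p_{\al+1}(\xi_4)|-\sum_{j\ge5}|p_{\al+1}(\xi_j)|$ then lets the main term dominate and finishes the proof.

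The main obstacle is the term $p_{\al+1}(\xi_3)+p_{\al+1}(\xi_4)$. When $\xi_3$ and $\xi_4$ nearly cancel, the target $|\xi_3+\xi_4|\,|\xi_1|^\al$ degenerates, so the crude bound $|\xi_3|^{\al+1}$ is far too lossy; one must genuinely extract the factor $\xi_3+\xi_4$, which is precisely what the mean value theorem achieves in the opposite-sign case, and this is exactly why the conclusion is phrased in terms of $|\xi_3+\xi_4|$ rather than $|\xi_3|$. The role played here by $\xi_3+\xi_4$ is the exact analogue of the role of $\xi_3$ in Case 1 of Lemma \ref{lem_res1}, and the counting of the factor $k$ in the tail mirrors that argument and is routine once the cancellation issue is handled.
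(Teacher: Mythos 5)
Your proof is correct and follows essentially the same route as the paper's: the main term is extracted from $p_{\al+1}(\xi_1)+p_{\al+1}(\xi_2)$ by the mean value theorem, and the dangerous pair $p_{\al+1}(\xi_3)+p_{\al+1}(\xi_4)$ is handled by a second mean value theorem application (using oddness) precisely when $\xi_3$ and $\xi_4$ have opposite signs, which is the paper's Case 2 with $\xi_3\xi_4<0$. Your sign-based dichotomy merely repackages the paper's splitting into $|\xi_3|\gg|\xi_4|$ versus $|\xi_3|\sim|\xi_4|$ (the non-cancelling configurations being reduced there to Case 1 of Lemma \ref{lem_res1}), so the two arguments are the same in substance.
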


\begin{proof}
  First we consider the case $k\ge 3$.
  We separate different cases:

  \noindent
  \textbf{Case 1:} $|\xi_3|\gg|\xi_4|$.
  Then we have $|\xi_3|\gg k\max_{j\ge 5}|\xi_j|$ and $|\xi_3|\sim|\xi_3+\xi_4|$.
  Therefore, we can argue exatly as in Case 1 of Lemma \ref{lem_res1} and
obtain $|\Om_{k+1}|\gtrsim |\xi_3||\xi_1|^\al\sim |\xi_3+\xi_4||\xi_1|^\al$.

  \noindent
  \textbf{Case 2:} $|\xi_3|\sim|\xi_4|$.
  When $\xi_3\xi_4\ge0$, it holds that $|\xi_3+\xi_4|=|\xi_3|+|\xi_4|$.
  Then we have $|\xi_3|,|\xi_4|\gg k\max_{j\ge5}|\xi_j|$.
  So, we can still argue as in Case 1 of Lemma \ref{lem_res1} and obtain $|\Om_{k+1}|\gtrsim |\xi_3||\xi_1|^\al\sim |\xi_3+\xi_4||\xi_1|^\al$.
  When $\xi_3\xi_4<0$, by the mean value theorem, there exist $\eta_1,\eta_2\in\R$ such that
  $|\eta_1|\sim |\xi_1|$, $|\xi_4|\lesssim|\eta_2|\lesssim|\xi_3|$ and
  \EQQS{
    -\Om_{k+1}
    =(\xi_3+\xi_4+\cdots+\xi_{k+2})p_{\al+1}'(\eta_1)
     -(\xi_3+\xi_4)p_{\al+1}'(\eta_2)-\sum_{j=5}^{k+2}p_{\al+1}(\xi_j)
  }
  since $p_{\al+1}(\xi)=-p_{\al+1}(-\xi)$ for any $\xi\in\R$.
  Note that $|p_{\al+1}'(\eta_2)|\ll |\xi_1|^\al$.
  Indeed, $|p_{\al+1}'(\eta_2)|\le \max_{\xi\in[0,\xi_0]}|p_{\al+1}'(\xi)|\ll|\xi_1|^\al$ when $|\eta_2|\le\xi_0$, and $|p_{\al+1}'(\eta_2)|\sim|\eta_2|^\al\lesssim|\xi_3|^\al\ll|\xi_1|^\al$ when $|\eta_2|\ge \xi_0$.
  As in Case 1 of Lemma \ref{lem_res1}, we also have $k\max_{j\ge 5}|p_{\al+1}(\xi_j)|\ll |\xi_3+\xi_4||\xi_1|^\al$.
  From these estimates, we obtain $|\Om_{k+1}|\gtrsim |\xi_3+\xi_4||\xi_1|^\al$.

  For the case $k=2$, we can argue exactly as above.
\end{proof}

\begin{lem}\label{lem_envelope}
  Let $\de> 1$, and suppose that the dyadic sequence $\{\om_N\}$ of positive numbers satisfies $\om_N\le \om_{2N}\le \de\om_N$ for $N\ge1$ and $\om_N\to\I$ as $N\to\I$.
  Then, for any $1<\de'<\de$, there exists a dyadic sequence $\{\ti{\om}_N\}$ such that
  $\ti{\om}_N\le \om_N$, $\ti{\om}_N\le \ti{\om}_{2N}\le \de'\ti{\om}_N$ for $N\ge 1$ and $\ti{\om}_N\to\I$ as $N\to\I$.
\end{lem}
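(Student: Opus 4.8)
The plan is to reduce the statement to a question about a single real sequence and then construct the regularized envelope by an explicit minimization. Writing $a_k := \om_{2^k}$ for $k\in\N$, the hypotheses become $a_k\le a_{k+1}\le \de a_k$ for all $k$ together with $a_k\to\I$; in particular $(a_k)$ is nondecreasing. I would then define the candidate sequence by
\[
  \ti a_k := \min_{0\le j\le k} a_j (\de')^{k-j}, \qquad k\in\N,
\]
and set $\ti\om_{2^k}:=\ti a_k$ together with $\ti\om_0:=\om_0$ (the index $N=0$ is unconstrained by the doubling relations, which only involve $N\ge1$). Equivalently $\ti a_0=a_0$ and $\ti a_{k+1}=\min(\de'\ti a_k, a_{k+1})$, a recursive form that makes the verifications transparent.

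Next I would check the three required structural properties. Taking $j=k$ in the minimum gives $\ti a_k\le a_k$, i.e.\ $\ti\om_N\le\om_N$. From the recursion $\ti a_{k+1}=\min(\de'\ti a_k, a_{k+1})$ the upper doubling bound $\ti a_{k+1}\le\de'\ti a_k$ is immediate, while monotonicity $\ti a_{k+1}\ge\ti a_k$ follows because $\de'\ti a_k\ge\ti a_k$ (as $\de'>1$) and $a_{k+1}\ge a_k\ge\ti a_k$. This yields $\ti\om_N\le\ti\om_{2N}\le\de'\ti\om_N$ for $N\ge1$. These steps use only $\de'>1$; the hypothesis $\de'<\de$ is what makes the statement meaningful, since for $\de'\ge\de$ one could simply take $\ti\om=\om$, and indeed in that range the construction returns $\ti a_k=a_k$ (because then $a_j(\de')^{k-j}\ge a_j\de^{k-j}\ge a_k$).

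The only real point is the divergence $\ti a_k\to\I$, which is where I expect the work to lie. The argument I would carry out is: fix $T>0$; since $(a_k)$ is nondecreasing and tends to $\I$, choose $K$ with $a_k\ge T$ for all $k\ge K$. For indices $j$ with $K\le j\le k$ every term satisfies $a_j(\de')^{k-j}\ge a_j\ge T$. For the finitely many indices $0\le j<K$ one has $a_j(\de')^{k-j}\ge a_0(\de')^{k-K+1}$, which tends to $\I$ as $k\to\I$ precisely because $\de'>1$, so there is $k_0$ beyond which this lower bound exceeds $T$. Combining the two ranges gives $\ti a_k\ge T$ for all $k\ge\max(K,k_0)$, proving $\ti a_k\to\I$. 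Finally I would note that each $\ti a_k$ is positive, so $\{\ti\om_N\}$ is a genuine dyadic sequence of positive numbers, completing the verification.
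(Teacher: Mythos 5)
Your proof is correct, and it follows the same basic template as the paper's (a recursive construction of $\ti\om$ from $\om$ by capping the dyadic growth ratio at $\de'$), but the two constructions and, more importantly, the two divergence arguments differ. The paper sets $\ti\om_{2N}:=(\de_N\wedge\de')\,\ti\om_N$ with $\de_N=\om_{2N}/\om_N$, i.e.\ it scales down the \emph{ratio}, whereas you take $\ti a_{k+1}=\min(a_{k+1},\de'\ti a_k)$, i.e.\ you cap against the original sequence itself; these produce genuinely different sequences (yours is the largest $\de'$-slowly-increasing minorant of $\{a_k\}$ and in general dominates the paper's). The payoff of your version is the closed form $\ti a_k=\min_{0\le j\le k}a_j(\de')^{k-j}$, from which divergence follows by a single uniform estimate: split the minimizing index $j$ at the threshold $K$ where $a_j\ge T$, and use $\de'>1$ to push the finitely many low-$j$ terms above $T$ as well. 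The paper instead has no closed form and must argue divergence by a case distinction on whether $\#\{N:\de_N>\de'\}$ is finite (in which case $\ti\om_N$ is eventually a constant multiple of $\om_N$) or infinite (in which case $\ti\om_{2^n}\ge(\de')^{j_n}\om_1\to\I$). Your route is slightly cleaner on this point; both, as you correctly observe, use only $\de'>1$ and never the hypothesis $\de'<\de$, which is present only to make the statement nontrivial. All the remaining verifications (monotonicity, the upper doubling bound, $\ti\om_N\le\om_N$, positivity, and the unconstrained index $N=0$) are carried out correctly.
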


\begin{proof}
  Let $1<\de'<\de$.
  Set $\de_N:=\om_{2N}/\om_N$ for $N\ge1$.
  Define $\{\ti{\om}_{N}\}$ so that $\ti{\om}_0:=\om_0$, $\ti{\om}_1:=\om_1$ and $\ti{\om}_{2N}:=(\de_{N}\wedge \de')\ti{\om}_N$ for $N\ge 1$.
  Since $\de_N,\de'\ge 1$, it is clear that $\ti{\om}_N\le \ti{\om}_{2N}\le \de'\ti{\om}_N$.
  We can show $\ti{\om}_N\le \om_N$ by the induction on $N$.
  Indeed, $\ti{\om}_0=\om_0$, $\ti{\om}_1= \om_1$ and
  \EQQS{
    \ti{\om}_{2N}=(\de_N\wedge \de') \ti{\om}_N
    \le \de_N  \ti{\om}_N\le \de_N \om_N =\om_{2N}
  }
  for $N\ge 1$.
  Now, we show $\ti{\om}_N\to\I$ as $N\to\I$.

  \noindent
  \textbf{Case 1:} $\#\{N;\de_N>\de'\}<\I$.
  Put $N_0:=1\vee\max\{N;\de_N>\de'\}$
  and $C:=\ti{\om}_{2N_0}/\om_{2N_0}$.
  Then, we can deduce inductively that $\ti{\om}_N=C\om_N$ for $N\ge 2N_0$, which shows $\ti{\om}_N\to\I$ as $N\to\I$ by the hypothesis.

  \noindent
  \textbf{Case 2:} $\#\{N;\de_N>\de'\}=\I$.
  Define an increasing sequence $\{j_n\}_{n\ge 2}\subset\N$ so that $j_n:=\#\{m\in\N;1\le m\le n-1, \de_{2^m} >\de' \}$ for $n\ge 2$.
  Then, we have $j_n\to\I$ as $n\to\I$.
  Observe that
  \EQQS{
    \ti{\om}_{2^n}=\prod_{m=0}^{n-1}(\de_{2^m}\wedge \de')\ti{\om}_1
    \ge (\de')^{j_n}\om_1\to\I
  }
  as $n\to\I$ since $\de'>1$, which completes the proof.
\end{proof}

\begin{rem}\label{rem_envelope}
  For the given dyadic sequence $\{\om_N\}$ of positive numbers, Lemma \ref{lem_envelope} enables us to assume $\de\le 2$, by defining a new dyadic sequence.
  We use this fact in the proof of Proposition \ref{prop_apri}.
  Let $N, M$ be dyadic numbers satisfying $lN\gtrsim M\gtrsim 1$ for some
$l\ge 2$.
  Using $\om_{2N}\le \de\om_N$, it holds
  \EQQS{
    \frac{\om_M}{\om_N}\lesssim \de^{\log_2l}\lesssim l
  }
  which is uniformly in $\de$.
\end{rem}

\subsection{Estimates on solutions to \eqref{eq1}}

\begin{lem}\label{lem1}
Let $1\le \de\le 2$, and suppose that the dyadic sequence $\{\om_N\}$ of positive numbers satisfies $\om_N\le \om_{2N}\le \de\om_N$ for $N\ge1$.
Let $ 0<T<1$, $ s>1/2 $  and $ u\in L^\infty_T H^s_\om $  be a solution to \eqref{eq1} associated with an initial datum
 $ u_0\in H^s_\om(\T) $.
Then $ u\in Z^s_{\om,T}$ and it holds
\begin{equation}\label{estXregular}
\|u\|_{Z^s_{\om,T}}  \lesssim \|u\|_{L^\infty_T H^s_\om} +G(\|u\|_{L^\infty_{T,x}}) \|u\|_{L^\infty_T H^{s}_\om}\;.
\end{equation}
 Moreover, for any couple $(u, v) \in   (L^\infty_T H^s)^2 $ of solutions
to \eqref{eq1} associated with a couple of initial data
 $ (u_0,v_0)\in (H^s(\T))^2 $  it holds
\begin{equation}\label{estdiffXregular}
\|u-v\|_{Z^{s-1}_{T}}  \lesssim \|u-v\|_{L^\infty_T H_x^{s-1}} +  G(\|u\|_{L^\infty_T H_x^{s}}+\|v\|_{L^\infty_T H_x^{s}})  \|u-v\|_{L^\infty_T H_x^{s-1}} \; .
\end{equation}
\end{lem}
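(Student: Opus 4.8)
The plan is to exploit the defining structure of the Bourgain space. Since the Fourier symbol of $\p_t+L_{\al+1}$ is $i(\ta-p_{\al+1}(\xi))$ and $P_N$ commutes with $\p_t+L_{\al+1}$, one has, writing $\LR{a}^2=1+a^2$, the exact identity
\[
\|v\|_{X_\om^{s-1,1}}^2=\|v\|_{X_\om^{s-1,0}}^2+\|(\p_t+L_{\al+1})v\|_{X_\om^{s-1,0}}^2,\qquad X_\om^{s-1,0}=L_t^2H_\om^{s-1}.
\]
Thus, to control the Bourgain part of $\|u\|_{Z_{\om,T}^s}$ it suffices to produce one extension of $u|_{]0,T[}$ whose $L_t^2H_\om^{s-1}$ norm and whose image under $\p_t+L_{\al+1}$ in $L_t^2H_\om^{s-1}$ are both bounded. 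I would take the extension $\rho_T(u)$ from \eqref{def_ext}; since $0<T<1$ one checks $\mu_T(t)=t$ and $\chi(t)=1$ on $[0,T]$, so $\rho_T(u)=u$ on $]0,T[$ and hence $\|u\|_{X_{\om,T}^{s-1,1}}\le\|\rho_T(u)\|_{X_\om^{s-1,1}}$ by definition of the restriction norm.

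The key computation is that $\rho_T(u)=U_\al(t)g(t)$ with $g(t)=\chi(t)U_\al(-\mu_T(t))u(\mu_T(t))$, so that $(\p_t+L_{\al+1})\rho_T(u)=U_\al(t)\p_t g(t)$. Expanding $\p_t g$ by the product and chain rules produces a $\chi'$ term and, through the factor $\mu_T'(t)$, the term $\p_\si[U_\al(-\si)u(\si)]|_{\si=\mu_T(t)}$. Here I would invoke that $u$ solves \eqref{eq1} together with the regularity of Remark \ref{rem2} to replace $\p_\si[U_\al(-\si)u(\si)]=U_\al(-\si)(\p_\si+L_{\al+1})u(\si)=-U_\al(-\si)\p_x f(u)(\si)$. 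Because $\mu_T$ is continuous, $g$ is continuous and piecewise $C^1$, so its distributional derivative coincides with the pointwise one and no boundary Dirac masses appear at $t=0,T,2T$. Using that $U_\al(\cdot)$ is an isometry on every $H_\om^{s-1}$, the $\chi'$ term is bounded in $L_t^2H_\om^{s-1}$ by $\|\chi'\|_{L^2}\|u\|_{L_T^\I H_\om^{s-1}}\lesssim\|u\|_{L_T^\I H_\om^s}$, while the nonlinear term, since $\mu_T'$ is supported in $]0,2T[$ with $|\mu_T'|\le1$, is bounded by $T^{1/2}\|\p_x f(u)\|_{L_T^\I H_\om^{s-1}}\lesssim T^{1/2}\|f(u)\|_{L_T^\I H_\om^s}$. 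Estimate \eqref{eq2.2ana} then gives $\|f(u)\|_{L_T^\I H_\om^s}\lesssim G(\|u\|_{L_{T,x}^\I})\|u\|_{L_T^\I H_\om^s}$. Bounding $\|\rho_T(u)\|_{L_t^2H_\om^{s-1}}\lesssim\|u\|_{L_T^\I H_\om^s}$ in the same way and combining yields \eqref{estXregular}.

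For \eqref{estdiffXregular} I would repeat the argument verbatim for $w=u-v$, which solves $(\p_t+L_{\al+1})w=-\p_x(f(u)-f(v))$, now working one derivative lower (in $X^{s-2,1}$, $L_t^2H^{s-2}$) because the difference equation carries one fewer symmetry. The only change is that the nonlinear contribution $\|\p_x(f(u)-f(v))\|_{L_T^\I H^{s-2}}\lesssim\|f(u)-f(v)\|_{L_T^\I H^{s-1}}$ is now controlled by \eqref{eq2.3ana}, producing the factor $G(\|u\|_{L_T^\I H^s}+\|v\|_{L_T^\I H^s})\|w\|_{L_T^\I H^{s-1}}$; there is no frequency envelope here, so the plain versions of the spaces suffice.

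The step I expect to require the most care is the exact computation of $(\p_t+L_{\al+1})\rho_T(u)$: one must verify that the distributional time derivative of the reflected-and-cut profile produces no singular boundary contributions—this is precisely why $\mu_T$, rather than a sharp cutoff $\1_T$, is used—and that the appeal to the equation is legitimate at the available regularity, namely $u\in C([0,T];H^{s-\al-1})$ with $\p_x f(u)\in L_T^\I H^{s-1}$. Everything else reduces to the isometry property of $U_\al$ and to the algebra estimates \eqref{eq2.2ana} and \eqref{eq2.3ana} already established.
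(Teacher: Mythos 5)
Your argument is correct and yields exactly the stated bounds, but it packages the linear part differently from the paper. The paper's proof is shorter on the surface: it invokes the extension Lemma \ref{extensionlem} to reduce matters to the $X^{s-1,1}_{\om,T}$ norm, notes (Remark \ref{rem2}) that $u$ satisfies the Duhamel formula, and then applies the standard homogeneous and inhomogeneous linear estimates in Bourgain spaces to get $\|u\|_{X^{s-1,1}_{\om,T}}\lesssim \|u_0\|_{H^{s-1}_\om}+\|\p_x(f(u))\|_{X^{s-1,0}_{\om,T}}\lesssim \|u\|_{L^\infty_T H^{s-1}_\om}+\|f(u)-f(0)\|_{L^2_TH^s_\om}$, concluding with \eqref{eq2.2ana} (and \eqref{eq2.3ana} for the difference). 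You instead work with the differential form of the equation: the exact identity $\|v\|_{X^{s-1,1}_\om}^2=\|v\|_{L^2_tH^{s-1}_\om}^2+\|(\p_t+L_{\al+1})v\|_{L^2_tH^{s-1}_\om}^2$ plus the explicit computation $(\p_t+L_{\al+1})\rho_T(u)=U_\al(t)\p_t\bigl(\chi(t)U_\al(-\mu_T(t))u(\mu_T(t))\bigr)$. This is essentially a self-contained re-proof of the extension lemma and of the linear estimates in one stroke; what it buys is transparency (no black-box citation, and the role of the piecewise affine $\mu_T$ in avoiding boundary Dirac masses is made explicit), at the cost of having to justify the chain rule at the regularity $h\in W^{1,\infty}([0,T];H^{s-1}_\om)$, which you correctly flag and which is indeed available since $\p_xf(u)\in L^\infty_TH^{s-1}_\om$ by \eqref{eq2.2ana}. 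Both routes reduce the lemma to the same algebra estimates \eqref{eq2.2ana} and \eqref{eq2.3ana}, so the final bounds coincide; only the cosmetic point that $\p_xf(u)=\p_x(f(u)-f(0))$ should be inserted so that \eqref{eq2.2ana} applies literally.
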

\begin{proof}
According to the extension Lemma \ref{extensionlem}, it is clear that we only have to estimate the $ X^{s-1,1}_{\om,T} $-norm of $ u $ to prove \eqref{estXregular}.
  As noticed in Remark \ref{rem2}, $ u $ satisfies the Duhamel formula of
\eqref{eq1} and $\|u_0\|_{H^\theta_\om} \le \|u\|_{L^\infty_T H^\theta_\om} $ for any $ \theta\le s $. Hence, standard linear estimates in Bourgain's spaces lead to
  \begin{eqnarray*}
  \|u\|_{X^{s-1,1}_{\om,T}} & \lesssim &  \|u_0\|_{H^{s-1}_\om}+\| \partial_x(f(u)) \|_{X^{s-1,0}_{\om,T}} \\
  & \lesssim &  \|u_0\|_{H^{s-1}_\om}+\|f(u)-f(0) \|_{L^2_T H^s_\om} \\
 &  \lesssim   &  \|u\|_{L^\infty_T H^{s-1}_\om}+ G(\|u\|_{L^\infty_{T,x}}) \| u \|_{L^\infty_T H^s_\om}  \; ,
 \end{eqnarray*}
by using \eqref{eq2.2ana}.\\
In the same way, using this times   \eqref{eq2.3ana}, we get
  \begin{eqnarray*}
  \|u-v\|_{X^{s-2,1}_T}
  & \lesssim &  \|u_0-v_0\|_{H_x^{s-1}}+\|f(u)-f(v)\|_{L^2_T H_x^{s-1}} \\
 &  \lesssim   &  \|u-v\|_{L^\infty_T H_x^{s-1}}+
 G(\|u\|_{L^\infty_T H_x^{s}}+\|v\|_{L^\infty_T H_x^{s}})  \|u-v\|_{L^\infty_T H_x^{s-1}},
 \end{eqnarray*}
 which completes the proof.
\end{proof}

The following proposition is one of main estimates in the present paper.

\begin{prop}[A priori estimate]\label{prop_apri}
  Let $\de\ge 1$, and suppose that the dyadic sequence $\{\om_N\}$ of positive numbers satisfies $\om_N\le \om_{2N}\le \de\om_N$ for $N\ge1$.
  Let $0<T<1$, $\al\in[1,2]$ and $2\ge s\ge s(\al):=1/2+2\be(\al)$.
  Let $u_0\in H^s(\T)$ and let $u\in L^\infty(0,T;H^s(\T)) $ be a solution to \eqref{eq1}--\eqref{initial} on $[0,T]$.  Then there exists an entire function $ G=G[f] $ that is increasing and non negative on $ \R_+ $  such that
  \EQS{\label{P}
    \|u\|_{L_T^\I H_\om^s}^2
    \le \|u_0\|_{H_\om^s}^2 + T^{1/4} G(\|u\|_{Z_{T}^{s(\al)}})
    \|u\|_{Z_{\om,T}^{s}}
    \|u\|_{L_T^\I H_\om^s}.
  }
\end{prop}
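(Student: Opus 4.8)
The plan is to run a frequency-weighted energy estimate. Since $u$ solves \eqref{eq1}, I localize at frequency $N$: because $P_N u$ is frequency localized, $\p_t P_N u\in L^\infty_T L^2$, so $t\mapsto\|P_N u(t)\|_{L^2}^2$ is Lipschitz, and because the symbol $-ip_{\al+1}$ of $L_{\al+1}$ is purely imaginary with $p_{\al+1}$ real and odd, $L_{\al+1}$ is skew-adjoint and drops out of the balance, leaving
\[\|P_N u(t)\|_{L^2}^2=\|P_N u_0\|_{L^2}^2-2\int_0^t\!\!\int_\T P_N u\,\p_x P_N(f(u))\,dx\,dt'.\]
Multiplying by $\om_N^2(1\vee N)^{2s}$, summing over $N$ and taking the supremum in $t$ reduces \eqref{P} to controlling $\mathcal N:=\sum_N\om_N^2(1\vee N)^{2s}\int_0^t\!\int_\T P_N u\,\p_x P_N(f(u))\,dx\,dt'$. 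By Hypothesis \ref{hyp2} I expand $f(u)-f(0)=\sum_{k\ge1}c_k u^k$ with $c_k=f^{(k)}(0)/k!$; the $k=0,1$ contributions vanish (for $k=1$, $\int P_N u\,\p_x P_N u=0$), so $\mathcal N=\sum_{k\ge1}c_{k+1}\mathcal N_k$, where $\mathcal N_k$ is the same form with $f(u)$ replaced by $u^{k+1}$. Everything reduces to a multilinear bound on $\mathcal N_k$ with constants summable against $c_{k+1}$.

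Next I analyze $\mathcal N_k$ in Fourier. Writing the $k+2$ frequencies $\xi_1,\dots,\xi_{k+2}$ with $\sum_j\xi_j=0$ and ordering $|\xi_{(1)}|\ge|\xi_{(2)}|\ge\cdots$, the two $P_N$'s force $|\xi_{(1)}|\gtrsim N$ while $\sum_j\xi_j=0$ forces $|\xi_{(1)}|\sim_k|\xi_{(2)}|$. Since $\{\om_N\}$ is slowly varying I place $\om_N\sim\om_{\xi_{(1)}}\sim\om_{\xi_{(2)}}$ on the two highest modes only (Remark \ref{rem_envelope}) and measure the remaining factors in the unweighted norm $\|u\|_{Z^{s(\al)}_T}$ (recall $H^{s(\al)}\hookrightarrow L^\infty$ since $s(\al)>1/2$). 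There are $2s+1$ derivatives to absorb, namely the weight $(1\vee N)^{2s}$ and the explicit $\p_x$, all located at the top scale; the regime $|\xi_{(1)}|\gg N$ is favourable (the high modes carry far more regularity than $N^{s}$ asks for), so the decisive regime is $|\xi_{(1)}|\sim N$. To move the derivative off a high mode I use the symmetrization of Lemma \ref{lem_comm1}: in the two-high-mode configurations it rewrites the form as $\int\Pi(u,v)w\,dx$ and transfers $\p_x$ onto the low factor $w$, at the harmless cost $\|\p_x w\|_{L^\I}$.

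The heart of the matter is then a dichotomy on the resonance $\Om_{k+1}$. When the interaction is non-resonant, Lemmas \ref{lem_res1}--\ref{lem_res2} give $|\Om_{k+1}|\gtrsim|\xi_3|\,|\xi_1|^\al$ (resp. $|\xi_3+\xi_4|\,|\xi_1|^\al$), so at least one factor has large modulation and I trade the lost derivative for its $X^{s-1,1}_\om$-norm, producing the factor $\|u\|_{Z^s_{\om,T}}$; here the time-cutoff splitting $\1_t=\1^{\textrm{low}}_{t,R}+\1^{\textrm{high}}_{t,R}$ together with \eqref{eq4.1}, \eqref{eq4.2}, \eqref{eq4.6} (and a suitable choice of $R$) keeps $Q_L$ essentially commuting with the low part and extracts a positive power of $T$. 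When the interaction is resonant the modulation is too small for this; the genuinely resonant configuration is four modes of size $\sim N$ (two nearly-cancelling pairs), handled with the improved Strichartz estimate \eqref{eq_stri1}. Placing these four factors in $L^4_{t,x}$ and the rest in $L^\I_{t,x}$, two of them can be measured at the top regularity $s$ (producing $\|u\|_{L^\infty_T H^s_\om}^2$) and two at $s(\al)$ (feeding $G$); the shared derivative budget is met because $2(s-\be(\al))+2(s(\al)-\be(\al))=2s+1$, which pins down exactly the threshold $s\ge s(\al)=1/2+2\be(\al)$ (equivalently $4(s-\be(\al))\ge 2s+1$). The estimate \eqref{eq_stri1} also supplies the $T^{1/4}$, and since $0<T<1$ every intermediate power of $T$ is dominated by $T^{1/4}$.

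Finally I sum over $k$. For fixed $k$ the number of frequency configurations grows only polynomially in $k$, each of the lower factors contributes one copy of $\|u\|_{Z^{s(\al)}_T}$ (or $\|u\|_{L^\I_{T,x}}$) as in \eqref{eq2.2ana}, and the two top factors assemble into $\|u\|_{Z^s_{\om,T}}\|u\|_{L^\infty_T H^s_\om}$ (using $\|u\|_{L^\infty_T H^s_\om}\le\|u\|_{Z^s_{\om,T}}$ to reconcile the purely Strichartz terms, which are quadratic in $\|u\|_{L^\infty_T H^s_\om}$). Hence the series $\sum_{k}|c_{k+1}|\,C^{k}x^{k}$, up to polynomial-in-$k$ factors, converges to an entire, increasing, nonnegative function $G=G[f]$ precisely because $f$ has infinite radius of convergence, and collecting everything yields \eqref{P}. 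I expect the main obstacle to be exactly the resonant four-wave term: it fixes the threshold $s(\al)$, it forces the $l^4$-valued form of Proposition \ref{prop_stri} so that no derivative is lost in the dyadic summation, and it must be combined with the symmetrization of Lemma \ref{lem_comm1} under uniform-in-$k$ bookkeeping, so that the resulting combinatorial constants still sum to an entire function.
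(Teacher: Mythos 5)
Your proposal is correct and follows essentially the same route as the paper: the frequency-localized energy identity, the expansion of $f$ with $k$-uniform multilinear bounds summed against the Taylor coefficients, the symmetrization via Lemma \ref{lem_comm1}, the resonant/non-resonant dichotomy treated respectively by the $l^4$-valued improved Strichartz estimate \eqref{eq_stri1} (with exactly the derivative count $4(s-\be(\al))\ge 2s+1$ fixing $s(\al)$) and by Lemmas \ref{lem_res1}--\ref{lem_res2} combined with the $\1_{t,R}^{\textrm{low}}/\1_{t,R}^{\textrm{high}}$ and modulation decompositions. The only detail you gloss over is the intermediate subcase $N_2\gg N_3$ with $|\xi_3+\xi_4|\lesssim k|\xi_5|$, which the paper handles by sharing the derivative among three factors via the $L^3_TL^\infty_x$ estimate \eqref{eq_stri2}, but this fits within the framework you describe.
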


\begin{proof} First we notice that according to Lemma \ref{lem1} it holds
$ u\in Z_{\om,T}^{s}$.
  By using \eqref{eq1}, we have
  \EQQS{
    \frac{d}{dt}\|P_N u(t,\cdot)\|_{L_x^2}^2
    =-2\int_\T P_N\p_x(f(u))P_N udx.
  }
 Fixing $ t\in ]0,T[ $,  integration in time between $0$ and $t$, multiplication by $ \om_N^2 (1\vee N)^{2s} $ and summation over $N$ yield
  \EQS{\label{PP}
    \begin{aligned}
      \|u(t)\|_{H_\om^s}^2
      &=\sum_{N}\om_N^2 (1\vee N)^{2s}
       \bigg\{\|P_N u_0\|_{L_x^2}^2
       -2\int_0^t\int_\T P_N\p_x(f(u))  P_N u dxdt'\bigg\}\\
      &\le \|u_0\|_{H_\om^s}^2
       +2\sum_{N} \om_N^2 (1\vee N)^{2s}
       \bigg|\int_0^t\int_\T P_N\p_x (f(u)-f(0)) P_N u dxdt'\bigg|\\
      &\le \|u_0\|_{H_\om^s}^2
        +2\sum_{N\ge 1}  \om_N^2 N^{2s} \bigg|\int_0^t\int_\T (f(u)-f(0)) P_N^2\p_x u dxdt'\bigg|
    \end{aligned}
  }
 since $P_0\p_x u =0$ and $\p_x (f(u))=\p_x (f(u)-f(0))$.
 Now we rewrite $f(u)-f(0) $ as $ \sum_{k\ge 1} \frac{f^{(k)}(0)}{k!} u^k $
   and we notice that for any fixed $ N\in 2^{\N}  $,
    \EQS{\label{eq4.4F}
      \int_0^t\int_\T (f(u)-f(0)) P_N^2\p_x u\,  dxdt'\
   =  \sum_{k\ge 1} \frac{f^{(k)}(0)}{k!}\int_0^t\int_\T u^k P_N^2\p_x u \, dxdt' \; .
   }
   Indeed
   \EQQS{
    \begin{aligned}
       \sum_{k\ge 1} \frac{|f^{(k)}(0)|}{k!}\int_0^t\int_\T |u^k P_N^2\p_x u| \, dxdt'
       & \lesssim N  \sum_{k\ge 1} \frac{|f^{(k)}(0)|}{k!}\int_0^t \|u^k\|_{L^2_x} \|u\|_{L^2_x} dt' \\
       & \lesssim N  \sum_{k\ge 1} \frac{|f^{(k)}(0)|}{k!}\int_0^t \|u\|_{L^\I_x}^{k-1} \|u\|_{L^2_x}^2 dt' \\
     &  \lesssim N T G(\|u\|_{L_{T,x}^\I}) \|u\|_{L_T^\I L_x^2}^2<\infty ,
    \end{aligned}
  }
  that proves \eqref{eq4.4F} by Fubini-Lebesgue's theorem. \eqref{eq4.4F}
 together with Fubini-Tonelli's theorem then ensure that
  \EQS{\label{eq4.4FF}
    \begin{aligned}
      &\sum_{N\ge 1}\om_N^2 N^{2s}
       \bigg|\int_0^t\int_\T (f(u)-f(0)) P_N^2\p_x u dxdt'\bigg|\\
      &=\sum_{N\ge 1}\om_N^2 N^{2s}
       \bigg|\sum_{k\ge 1}\frac{f^{(k)}(0)}{k!}\int_0^t\int_\T u^k P_N^2\p_x u dxdt'\bigg|\\
      &\le \sum_{N\ge 1}\sum_{k\ge 1}\om_N^2 N^{2s}
       \frac{|f^{(k)}(0)|}{k!}
       \bigg|\int_0^t\int_\T u^k P_N^2\p_x u dxdt'\bigg|
      =\sum_{k\ge 1}  \frac{|f^{(k)}(0)|}{k!} I_{k}^t,
    \end{aligned}
  }
  where
  \EQQS{
    I^t_{k} := \sum_{N\ge 1} \om_N^2 N^{2s} \bigg|\int_0^t\int_\T u^k P_N^2\p_x u \, dxdt'\bigg|.
  }
  By integration by parts it is easy to check that $ I_1^t =0 $.
  We set
  \EQQS{
    C_0:=\|u\|_{Z_T^{s(\al)}}.
  }
Let us now  prove that for any $ k\ge 1$ it holds
  \EQS{\label{eq4.4FFF}
  I_{k+1}^t \le C^k T^{1/4} G(C_0) C_0^k
    (\|u\|_{X_{\om,T}^{s-1,1}}+\|u\|_{L_T^\I H_\om^s})
    \|u\|_{L_T^\I H_\om^s}.
   }
  that  clearly  leads \eqref{P}  by taking  \eqref{PP} and  \eqref{eq4.4FF} into account
  since  $\sum_{k\ge 1}  \frac{|f^{(k+1)}(0)|}{(k+1)!} C^k C_0^k <\infty$.

 In the sequel we  fix $ k\ge 1$.
 For simplicity, for any positive numbers $a$ and $b$, the notation $a\lesssim_k b$ means there exists a positive constant $C>0$ independent of $k$ such that
 \EQS{\label{eq_lesssim}
   a\le C^k b.
 }
  Remark that $a\le k^m b$ for $m\in\N$ can be expressed by $a\lesssim_k b$ too since an elementary calculation shows $k^m\le m! e^k$ for $m\in\N$.
  Here, $e$ is Napier's constant.
  The contribution of the sum over  $N\lesssim 1 $ in  $ I^t_{k+1} $  is easily estimated by
  \EQS{\label{eq4.4}
    \begin{aligned}
     &  \sum_{N\lesssim 1}\om_N^2 N^{2s}
      \bigg|\int_0^t\int_\T  u^{k+1}  P_N^2\p_x u dxdt'\bigg|\\
     &\le  T\sum_{N\lesssim 1}\|u\|_{L_{T,x}^\I}^k \|u\|_{L_T^\I L_x^2}\|P_N^2 u\|_{L_T^\I L_x^2}
     \lesssim_k    T C_0^k \|u\|_{L_T^\I H^s_\om}^2.
    \end{aligned}
  }
It thus remains to bound the contribution of the sum over $N \gg 1  $ in $ I^t_{k+1} $.
  Putting
  \EQQS{
    A(\xi_1,\dots,\xi_{k+2})
    &:=\sum_{j=1}^{k+2}\phi_N^2(\xi_j)\xi_j,\\
    A_1(\xi_1,\xi_2)
    &:=\phi_N^2(\xi_1)\xi_1+\phi_N^2(\xi_2)\xi_2,\\
    A_2(\xi_4,\dots,\xi_{k+2})
    &:=\sum_{j=4}^{k+2}\phi_N^2(\xi_j)\xi_j,
  }
  we see from the symmetry that
  \EQS{\label{eq_4.8}
  \begin{aligned}
    \int_\T u^{k+1}P_N^2\p_x u dx
    &=\frac{i}{k+2}\sum_{\xi_1+\cdots+\xi_{k+2}=0}A(\xi_1,\dots,\xi_{k+2})\prod_{j=1}^{k+2}\ha{u}(\xi_j)\\
    &=\frac{i}{k+2}\sum_{N_1,\dots,N_{k+2}}\sum_{\xi_1+\cdots+\xi_{k+2}=0}A(\xi_1,\dots,\xi_{k+2})\prod_{j=1}^{k+2}\phi_{N_j}(\xi_j)\ha{u}(\xi_j).
  \end{aligned}
  }
  By symmetry  we can assume  that $ N_1\ge  N_2 \ge N_3 $ if $ k=1$,  $N_1\ge N_2 \ge N_3\ge N_4  $ if $ k=2 $ and $N_1\ge N_2\ge N_3\ge N_4\ge N_5=\max_{j\ge5}N_j$ if  $ k\ge 3$. We notice that
   the cost of this choice is a constant factor  less than $(k+2)^4 $.
   It is also worth noticing that the frequency projection operator $ P_N
$ ensures that the contribution of any $ N_1\le  N/4 $ does cancel. We thus can assume that $ N_1 \ge N/4 $ and that $ N_2\gtrsim N_1/k $ with $ N_2\ge 1$.

  First, we consider the contribution of $A_2$. Note that we must have $k\ge 2 $ since otherwise $ A_2=0 $.
  Also  it suffices to consider the contribution of $(\phi_N(\xi_4))^2\xi_4$ since the contributions of
   $(\phi_N(\xi_j))^2\xi_j$  for $ j\ge 5 $ are clearly  simplest.
  Note that $N_4 \sim  N $ in this case.
  By the Bernstein inequality, we have
  \EQS{\label{eq4.9}
    \sum_{K}\|P_{K}u\|_{L_{T,x}^\I}
    \lesssim\sum_{K}(1\vee K^{1/2-s(\al)})\|u\|_{L_{T}^\I H_{x}^{s(\al)}}
    \lesssim\|u\|_{L_{T}^\I H_{x}^{s(\al)}}\lesssim C_0.
  }
  This together with H\"older's and Young's inequalities and \eqref{eq_stri1} gives
  \EQQS{
    &\sum_{N\gg 1}\sum_{N_1,\dots,N_{k+2}} \om_N^2 N^{2s}\bigg|\int_0^t\int_\T (\p_x P_N^2 P_{N_4}u) \prod_{j=1,j\neq4}^{k+2}P_{N_j}u  dxdt'\bigg|\\
    &\lesssim_k\|u\|_{L_{T}^\I H_{x}^{s(\al)}}^{k-2}
      \sum_{N_1\ge N_2\ge N_3\ge  N_4\gg 1}
      \om_{N_4}^2 N_4^{2s+1}\prod_{j=1}^4\|P_{N_j}u\|_{L_{T,x}^4}\\
    &\lesssim_k C_0^{k-2}
      \sum_{N_1\ge N_2\ge N_3\ge  N_4\gg 1 }
      \prod_{j=1}^2\bigg(\frac{N_4}{N_j}\bigg)^{s-\be(\al)}
       \om_{N_j}\|D_x^{s-\be(\al)} P_{N_j}u\|_{L_{T,x}^4}\\
    &\quad\quad\times\prod_{j=3}^4
      \bigg(\frac{N_4}{N_j}\bigg)^{1/2+\be(\al)}
      \|D_x^{1/2+\be(\al)} P_{N_j}u\|_{L_{T,x}^4}\\
    &\lesssim_k C_0^{k-2}
      \bigg(\sum_{N_4}\|D_x^{1/2+\be(\al)}P_{N_4} u\|_{L_{T,x}^4}^4\bigg)^{1/4}\\
    &\quad\quad\times\bigg(\sum_{N_4}\bigg(\sum_{N_3 \gtrsim N_4}
     \bigg(\frac{N_4}{N_3}\bigg)^{1/2+\be(\al)}\|D_x^{1/2+\be(\al)}P_{N_3} u\|_{L_{T,x}^4}\bigg)^4\bigg)^{1/4}\\
    &\quad\quad\times  \bigg(\sum_{N_4}\bigg(\sum_{K\gtrsim N_4}
         \bigg(\frac{N_4}{K}\bigg)^{2(s-\be(\al))}\om_{K}^2
         \|D_x^{s-\be(\al)}P_{K} u\|_{L_{T,x}^4}^2\bigg)^2\bigg)^{1/2}\\
    &\lesssim_k T^{1/2} C_0^{k} G(C_0) \|u\|_{L_{T}^\I H_\om^s}^2.
  }
  Next, we consider the contribution $A_1$. We notice that the frequency projector in $ A_1 $ ensures that either $ N_1\sim N$ or $ N_2\sim N $ and
thus in any case $ N \gtrsim N_3$.  Moreover we can also assume that $ N_3\ge 1 $ since otherwise the contribution of $ A_1$ cancelled by integration by parts.

 We divide the contribution $A_1$ into three cases:  1. $N_2\lesssim    N_3\lesssim k N_4$, 2. $ N_3\gg k N_4$ or $ k=1$  and 3. $N_2\gg    N_3 $.
  Set
  \EQQS{
    J_t
    :=\sum_{N\gg 1}\sum_{N_1,\dots,N_{k+2}}\om_N^2 N^{2s}
      \bigg|\int_0^t\int_\T\Pi(P_{N_1}u,P_{N_2}u)\prod_{j=3}^{k+2}P_{N_j}udxdt'\bigg|,
  }
  where $\Pi(f,g)$ is defined by \eqref{def_pi}.
  Note that $ N\gg 1 $ ensures that  $N_1\gg1$.\\
  \noindent
  \textbf{Case 1: $N_2\lesssim  N_3\lesssim k N_4$.}
  Since $N \lesssim N_1\lesssim k  N_2 \lesssim  k N_3 \lesssim k^2 N_4  $, H\"older's, Bernstein's  and Young's inequalities and \eqref{eq_stri1}
show that
  \EQQS{
    J_t
    &\lesssim k^{2(2s+1)} \sum_{N_1,\dots,N_{k+2}\atop N_1\lesssim k^2 N_4, N_1\ge N_4, N_2\ge N_4,N_3\ge N_4} \om_{N_1}^2 N_4^{2s+1}
    \prod_{j=1}^4\|P_{N_j}u\|_{L_{T,x}^4}
    \prod_{j=5}^{k+2}\|P_{N_j}u\|_{L_{T,x}^\I}\\
    &\lesssim_k C_0^{k-2}
    \sum_{N_1\ge N_4,N_2\ge N_4,N_3\ge N_4}
    \frac{\om_{N_1}}{\om_{N_2}} \om_{N_1} \om_{N_2}  \Bigl(\frac{N_4}{N_1}\Bigr)^{s-\be(\al)} \Bigl(\frac{N_4}{N_2}\Bigr)^{s-\be(\al)} \Bigl(\frac{N_4}{N_3}\Bigr)^{\be(\al)+1/2}\\
    &\quad \times \prod_{j=1}^2
      \|D_x^{s-\be(\al)} P_{N_j}u\|_{L_{T,x}^4}
     \prod_{n=3}^4
     \|D_x^{\be(\al)+1/2} P_{N_n}u\|_{L_{T,x}^4}\\
    &\lesssim_k C_0^{k-2} \Bigl( \sum_{K}\om_K^4\|D_x^{s-\be(\al)}P_K u\|_{L_{T,x}^4}^4\Bigr)^{1/2}
   \Bigl( \sum_{K}\|D_x^{\be(\al)+1/2}P_K u\|_{L_{T,x}^4}^4\Bigr)^{1/2}\\
    &\lesssim_k  T^{1/2} C_0^{k} G(C_0) \|u\|_{L_T^\I H_\om^{s}}^2,
  }
where we used that $ s\le 2  $ so that $ k^{2(2s+1)} \le k^{10} $ and that since $ N_1\lesssim k N_2 $ we have  $  \frac{\om_{N_1}}{\om_{N_2}} \lesssim k $ since $\de\le 2$.\\
  \noindent
  \textbf{Case 2: $ N_3\gg k  N_4$ or $ k=1$.} By impossible frequency interections, we must have $ N\sim N_1\sim N_2$.
  We take the extensions $\check{u}=\rho_T(u)$ of $u$ defined in \eqref{def_ext}.
  For simplicity, with a slight abuse of notation, we define the following functional:
  \EQS{\label{def_J1}
    J_\I^{(2)}(u_1,\cdots,u_{k+2})
    :=\sum_{N\gg 1}\sum_{N_1,\dots,N_{k+2}}\om_N^2 N^{2s}
      \bigg|\int_\R\int_\T\Pi(u_1,u_2)
      \prod_{j=3}^{k+2}u_jdxdt'\bigg|.
  }
  Setting $R=N_1^{1/3}N_3^{4/3}$, we split $J_t$ as
  \EQQS{
    J_t
    &\le J_\I^{(2)}(P_{N_1}\1_{t,R}^{\textrm{high}}\check{u},
      P_{N_2}\1_t\check{u},P_{N_3}\check{u},\cdots, P_{N_{k+2}}\check{u})\\
    &\quad+J_\I^{(2)}(P_{N_1}\1_{t,R}^{\textrm{low}}\check{u},
      P_{N_2}\1_{t,R}^{\textrm{high}}\check{u},
      P_{N_3}\check{u},\cdots,
      P_{N_{k+2}}\check{u})\\
    &\quad+J_\I^{(2)}(P_{N_1}\1_{t,R}^{\textrm{low}}\check{u},
      P_{N_2}\1_{t,R}^{\textrm{low}}\check{u},
      P_{N_3}\check{u},\cdots,
      P_{N_{k+2}}\check{u})
    =:J_{\I,1}^{(2)}+J_{\I,2}^{(2)}+J_{\I,3}^{(2)}.
  }
  For $J_{\I,1}^{(2)}$, we see from  \eqref{eq4.1} that $\|\1_{t,R}^{{\rm
high}}\|_{L^1}\lesssim T^{1/4}N_1^{-1/4}N_3^{-1}$, which gives
  \EQQS{
    J_{\I,1}^{(2)}
    &\lesssim\sum_{N_1,\dots,N_{k+2}} \om_{N_1}^2
      N_1^{2s}N_3\|\1_{t,R}^{\textrm{high}}\|_{L_t^1}
      \|P_{N_1}\check{u}\|_{L_t^\I L_x^2}\|P_{N_2}\check{u}\|_{L_t^\I L_x^2}
      \prod_{j=3}^{k+2}\|P_{N_j}\check{u}\|_{L_{t,x}^\I}\\
    &\lesssim_k T^{1/4}\|\check{u}\|_{L_t^\I H_x^{s(\al)}}^k\|\check{u}\|_{L_t^\I H_\om^{s}}^2\sum_{N_1} N_1^{-1/4}
    \lesssim_k T^{1/4}C_0^k\|u\|_{L_T^\I H_\om^{s}}^2
  }
  since $N\sim N_1 \sim N_2$.
  In the last inequality, we used \eqref{eq2.1single}.
  By \eqref{eq4.2}, $J_{\I,2}^{(2)}$ can be estimated by the same bound as above.
  For $J_{\I,3}^{(2)}$, we see from Lemma \ref{lem_res1} that $|\Om_{k+1}|\gtrsim N_3N_1^\al
   \gg R$.
  Then, defining $L:=N_3 N_1^\al$, we decompose $J_{\I,3}^{(2)}$ as
  \EQQS{
    J_{\I,3}^{(2)}
    &\le J_\I^{(2)}(P_{N_1}Q_{\gtrsim L}(\1_{t,R}^{\textrm{low}}\check{u}),
      P_{N_2}\1_{t,R}^{\textrm{low}}\check{u},
      P_{N_3}\check{u},\cdots,
      P_{N_{k+2}}\check{u})\\
    &\quad +J_\I^{(2)}(P_{N_1}Q_{\ll L}(\1_{t,R}^{\textrm{low}}\check{u}),
      P_{N_2}Q_{\gtrsim L}(\1_{t,R}^{\textrm{low}}\check{u}),
      P_{N_3}\check{u},\cdots,
      P_{N_{k+2}}\check{u})\\
    &\quad +J_\I^{(2)}(P_{N_1}Q_{\ll L}(\1_{t,R}^{\textrm{low}}\check{u}),
      P_{N_2}Q_{\ll L}(\1_{t,R}^{\textrm{low}}\check{u}),
      P_{N_3}Q_{\gtrsim L}\check{u},\cdots,
      P_{N_{k+2}}\check{u})\\
    &\quad+\cdots
      +J_\I^{(2)}(P_{N_1}Q_{\ll L}(\1_{t,R}^{\textrm{low}}\check{u}),
        P_{N_2}Q_{\ll L}(\1_{t,R}^{\textrm{low}}\check{u}),
        P_{N_3}Q_{\ll L}\check{u},\cdots,
        P_{N_{k+2}}Q_{\gtrsim L}\check{u})\\
    &=:J_{\I,3,1}^{(2)}+\cdots+J_{\I,3,k+2}^{(2)}.
  }
  We also see from \eqref{eq4.1} that
  \EQS{\label{eq4.7}
    \begin{aligned}
      \|P_{N_2}\1_{t,R}^{\textrm{low}}\check{u}\|_{L_{t,x}^2}
      &\le\|P_{N_2}\1_{t}\check{u}\|_{L_{t,x}^2}
        +\|P_{N_2}\1_{t,R}^{\textrm{high}}\check{u}\|_{L_{t,x}^2}\\
      &\lesssim \|P_{N_2}\1_{t}\check{u}\|_{L_{t,x}^2}
        +T^{1/4}R^{-1/4}\|P_{N_2}\check{u}\|_{L_{t}^\I L_x^2}.
    \end{aligned}
  }
  For $J_{\I,3,1}^{(2)}$, Lemmas \ref{lem_comm1} and \ref{extensionlem}, the H\"older inequality, \eqref{eq4.6} and \eqref{eq4.7} imply that
  \EQQS{
    J_{\I,3,1}^{(2)}
    &\lesssim \sum_{N_1,\dots,N_{k+2}}\om_{N_1}^2 N_1^{2s}N_3
      \|P_{N_1}Q_{\gtrsim L} (\1_{t,R}^{\textrm{low}}\check{u})\|_{L_{t,x}^2}
      \|P_{N_2}\1_{t,R}^{\textrm{low}}\check{u}\|_{L_{t,x}^2}
      \prod_{j=3}^{k+2}\|P_{N_j}\check{u}\|_{L_{t,x}^\I}\\
    &\lesssim_k \|\check{u}\|_{L_t^\I H_x^{s(\al)}}^k
      \sum_{N_1\gtrsim1}\om_{N_1}^2 N_1^{2s-1}
      \|P_{N_1} \check{u}\|_{X^{0,1}}
        \|P_{N_1} \1_t \check{u}\|_{L_{t,x}^2}\\
    &\quad+T^{1/4}\|\check{u}\|_{L_t^\I H_x^{s(\al)}}^{k-1}
      \sum_{N_1\gtrsim N_3}\om_{N_1}^2 N_1^{2s-13/12}N_3^{-1/3}\|P_{N_1} \check{u}\|_{X^{0,1}}
        \|P_{N_1} \check{u}\|_{L_{t}^\I L_x^2}
        \|P_{N_3}\check{u}\|_{L_{t,x}^\I}\\
    &\lesssim T^{1/4}C_0^{k}
      \|\check{u}\|_{L_t^\I H_\om^s}\|\check{u}\|_{X_\om^{s-1,1}}
      \lesssim_k T^{1/4} C_0^{k}
        \|u\|_{L_T^\I H_\om^s}\|u\|_{Z_{\om,T}^{s}}.
  }
  Here, we used $N_1^{-\al}\le N_1^{-1}$ since $\al\in[1,2]$.
  We can evaluate the contribution $J_{\I,3,2}^{(2)}$ by the same way with \eqref{eq4.7}.
  Next, we consider the contribution $J_{\I,3,3}^{(2)}$.
  Lemmas \ref{lem_comm1} and \ref{extensionlem}, the H\"older inequality and \eqref{eq4.3} show
  \EQQS{
    J_{\I,3,3}^{(2)}
    &\lesssim\sum_{N_1,\dots,N_{k+2}}\om_{N_1}^2  N_1^{2s}N_3
      \|P_{N_1}Q_{\ll L}
      (\1_{t,R}^{\textrm{low}} \check{u})\|_{L_{t,x}^2}
      \|P_{N_2}Q_{\ll L}(\1_{t,R}^{\textrm{low}}\check{u})\|_{L_{t}^\I L_x^2}\\
    &\quad\times\|P_{N_3}Q_{\gtrsim L}
      \check{u}\|_{L_{t}^2 L_x^\I}
      \prod_{j=4}^{k+2}\|P_{N_j}\check{u}\|_{L_{t,x}^\I}\\
    &\lesssim_k T^{1/2} \|\check{u}\|_{L_{t}^\I H_x^{s(\al)}}^{k-1}
      \sum_{N_1\gtrsim N_3\ge 1} \om_{N_1}^2
      N_1^{2s-\al}\|P_{N_1}\check{u}\|_{L_{t}^\I L_x^2}^2
      \|D_x^{1/2}P_{N_3}\check{u}\|_{X^{0,1}}\\
    &\lesssim T^{1/2}C_0^{k-1}
      \sum_{N_1\gtrsim N_3\ge 1}
      N_1^{-\be(\al)}N_3^{-\be(\al)}
      \om_{N_1}^2 N_1^{2s}\|P_{N_1}\check{u}\|_{L_{t}^\I L_x^2}^2
      \|P_{N_3}\check{u}\|_{X^{s(\al)-1,1}}\\
    &\lesssim T^{1/2} C_0^{k-1}
      \|\check{u}\|_{X^{s(\al)-1,1}}\|\check{u}\|_{L_t^\I H_\om^s}^2
    \lesssim_k T^{1/2} C_0^{k}\|u\|_{L_T^\I H_\om^s}^2
  }
  since $s(\al)-1<0$.
  In a similar manner, we can evaluate the contribution $J_{\I,3,j}^{(2)}$ for $j=4,\dots,k+2$ by the same bound.

  \noindent
  \textbf{Case 3: $N_2\gg  N_3$.}
  In this case, roughly speaking, we compare  $|\xi_3+\xi_4| $ and $ k |\xi_5|$ where $ \xi_i $ is the $ i$-th largest frequency. If $|\xi_3+\xi_4| \gg k |\xi_5| $, we have a suitable non resonance relation (Lemma \ref{lem_res2}) whereas otherwise we can share the lost derivative between  three functions.
  We split $J_t$ as
  \EQQS{
    J_t
    &\le\sum_{N\gg 1}\sum_{N_1,\dots,N_{k+2}}\om_{N}^2 N^{2s}
      \bigg|\int_0^t\int_\T\Pi(P_{N_1}u,P_{N_2}u)P_{\lesssim k N_5 }\bigg(\prod_{j=3}^{k+2}P_{N_j}u\bigg)dxdt'\bigg|\\
    &\quad+\sum_{N\gg 1}\sum_{N_1,\dots,N_{k+2}}
      \sum_{ k N_5\ll M\lesssim N_3}\om_{N}^2 N^{2s}
      \bigg|\int_0^t\int_\T\Pi(P_{N_1}u,P_{N_2}u)P_{M}\bigg(\prod_{j=3}^{k+2}P_{N_j}u\bigg)dxdt'\bigg|\\
    &=:I_{t}^{(3)}+J_{t}^{(3)}.
  }
  Remark that if $k=2$, then the term $I_{t}^{(3)}$ does not appear, and it holds that $J_t\le J_{t}^{(3)}$ with the summation $\sum_{M\lesssim N_3}$ instead of $\sum_{k N_5\ll M\lesssim  N_3}$.
   Note also that the contribution in respectively $ I_{t}^{(3)}$ and $J_{t}^{(3)}$  of respectively $ N_5=0 $ and $ N_5=M=0$ does vanish by integration by parts. Therefore we can always assume that $ N_5\ge 1$ in $I_{t}^{(3)}$ and that
   $ M\ge 1 $ in $J_{t}^{(3)}$.
  For $I_{t}^{(3)}$, since either $N_1\sim N$ or $N_2\sim N$, we see from
the Young inequality and the assumption that $\de\le 2$ that
  \EQQS{
    &\sum_{N_1\ge N_2,kN_2\gtrsim N_1}
    (\om_{N_1}^2N_1^{2s}+\om_{N_2}^2N_2^{2s})\|P_{N_1}u\|_{L_x^2}
    \|P_{N_2}u\|_{L_x^2}\\
    &\lesssim k^s \sum_{kN_2\gtrsim N_1}
     \frac{\om_{N_1}}{\om_{N_2}}\om_{N_1}\om_{N_2}\bigg(\frac{N_1}{kN_2}\bigg)^s \|D_x^s P_{N_1}u \|_{L_x^2}
     \|D_x^s P_{N_2}u \|_{L_x^2}\\
    &\quad+\sum_{N_2\le N_1}
      \frac{\om_{N_2}}{\om_{N_1}}\om_{N_1}\om_{N_2}\bigg(\frac{N_2}{N_1}\bigg)^s \|D_x^s P_{N_1}u \|_{L_x^2}
      \|D_x^s P_{N_2}u \|_{L_x^2}
    \lesssim k^3 \|u\|_{H_\om^s}^2.
  }
   This, Lemma \ref{lem_comm1}, H\"older's and Young's inequalities and \eqref{eq_stri2} show
  \EQQS{
    I_{t}^{(3)}
    &\lesssim \sum_{N_1\ge N_2,kN_2\gtrsim N_1} \sum_{N_3,\dots,N_{k+2}} (\om_{N_1}^2N_1^{2s}+\om_{N_2}^2N_2^{2s})  k N_5\\
    &\quad\times\int_0^t \|P_{N_1}u\|_{L_x^2}
      \|P_{N_2}u\|_{L_x^2}
      \prod_{j=3}^{k+2}\|P_{N_j}u\|_{L_x^\I}dt'\\
    &\lesssim_k \|u\|_{L_T^\I H_x^{s(\al)}}^{k-3}
      \|u\|_{L_T^\I H_\om^{s}}^2\sum_{N_3\ge  N_4\ge N_5}
       \prod_{j=3}^{5}\bigg(\frac{N_5}{N_j}\bigg)^{1/3}
       \|D_x^{1/3} P_{N_j}u\|_{L_{T}^3 L_x^\I}\\
    &\lesssim_k C_0^{k-3}
      \|u\|_{L_T^\I H_\om^{s}}^2
      \sum_{K}\|D_x^{1/3}P_K u\|_{L_T^3 L_x^\I}^3
    \lesssim_k  T^{5/8} C_0^{k} G(C_0) \|u\|_{L_T^\I H_\om^{s}}^{2}
  }
  since $s(\al)=1/2+2\be(\al)\ge 7/12+\be(\al)$ for $\al\in[1,2]$.
  For $J_{t}^{(3)}$, we take the extensions $\check{u}=\rho_T(u)$ of $u$ defined in \eqref{def_ext}.
  Note that we have $ N_1\sim N_2 \sim N $.
  We further decompose $J_{t}^{(3)}$ as in Case 2.
  In the same sprit as \eqref{def_J1}, with a slight abuse of notation, we define the functional for the sake of notation:
  \EQQS{
    &J_\I^{(3)}(u_1,\cdots,u_{k+2})\\
    &:=\sum_{N\gg 1}\sum_{N_1,\dots,N_{k+2}}
      \sum_{k N_5\ll M\lesssim N_3}
      \om_N^2 N^{2s}
      \bigg|\int_\R\int_\T\Pi(u_1,u_2)
      P_{M}\bigg(\prod_{j=3}^{k+2}u_j\bigg)dxdt'\bigg|.
  }
  Putting $R=N_1^{1/3}M^{4/3}$, we obtain
  \EQQS{
    J_t^{(3)}
    &\le J_\I^{(3)}(P_{N_1}\1_{t,R}^{\textrm{high}}\check{u},
      P_{N_2}\1_t\check{u},P_{N_3}\check{u},\cdots, P_{N_{k+2}}\check{u})\\
    &\quad+J_\I^{(3)}(P_{N_1}\1_{t,R}^{\textrm{low}}\check{u},
      P_{N_2}\1_{t,R}^{\textrm{high}}\check{u},
      P_{N_3}\check{u},\cdots,
      P_{N_{k+2}}\check{u})\\
    &\quad+J_\I^{(3)}(P_{N_1}\1_{t,R}^{\textrm{low}}\check{u},
      P_{N_2}\1_{t,R}^{\textrm{low}}\check{u},
      P_{N_3}\check{u},\cdots,
      P_{N_{k+2}}\check{u})
    =:J_{\I,1}^{(3)}+J_{\I,2}^{(3)}+J_{\I,3}^{(3)}.
  }
  Obviously, we have for any $\e>0$ (for instance, $\e=\be(\al)$)
  \EQS{\label{eq4.5}
    \sum_{M\lesssim N _3}1\lesssim N_3^{\e},
  }
  where the implicit constant does not depend on $N_3$.
  By \eqref{eq4.5}, \eqref{eq2.1single} and the same argument as that of $J_{\I,1}^{(2)}$, we obtain
  \EQQS{
    J_{\I,1}^{(3)}+J_{\I,2}^{(3)}
    \lesssim_k T^{1/4}\|\check{u}\|_{L_t^\I H_x^{s(\al)}}^k
      \|\check{u}\|_{L_t^\I H_\om^s}^2
    \lesssim_k T^{1/4} C_0^{k} \|u\|_{L_T^\I H_\om^s}^2.
  }
  For $J_{\I,3}^{(3)}$, Lemma \ref{lem_res2} implies that $|\Om_{k+1}|\gtrsim MN_1^\al\sim |\xi_3+\xi_4|N_1^\al$.
  So, defining $L:=MN_1^\al$, we have
  \EQQS{
    J_{\I,3}^{(3)}
    &\le J_\I^{(3)}(P_{N_1}Q_{\gtrsim L}(\1_{t,R}^{\textrm{low}}\check{u}),
      P_{N_2}\1_{t,R}^{\textrm{low}}\check{u},
      P_{N_3}\check{u},\cdots,
      P_{N_{k+2}}\check{u})\\
    &\quad +J_\I^{(3)}(P_{N_1}Q_{\ll L}(\1_{t,R}^{\textrm{low}}\check{u}),
      P_{N_2}Q_{\gtrsim L}(\1_{t,R}^{\textrm{low}}\check{u}),
      P_{N_3}\check{u},\cdots,
      P_{N_{k+2}}\check{u})\\
    &\quad +J_\I^{(3)}(P_{N_1}Q_{\ll L}(\1_{t,R}^{\textrm{low}}\check{u}),
      P_{N_2}Q_{\ll L}(\1_{t,R}^{\textrm{low}}\check{u}),
      P_{N_3}Q_{\gtrsim L}\check{u},\cdots,
      P_{N_{k+2}}\check{u})\\
    &\quad+\cdots
      +J_\I^{(3)}(P_{N_1}Q_{\ll L}(\1_{t,R}^{\textrm{low}}\check{u}),
        P_{N_2}Q_{\ll L}(\1_{t,R}^{\textrm{low}}\check{u}),
        P_{N_3}Q_{\ll L}\check{u},\cdots,
        P_{N_{k+2}}Q_{\gtrsim L}\check{u})\\
    &=:J_{\I,3,1}^{(3)}+\cdots+J_{\I,3,k+2}^{(3)}.
  }
  It is worth noting that $R\ll L=MN_1^\al$ since $N_1\gg1$.
  For $J_{\I,3,1}^{(3)}$, we use the argument of $J_{\I,3,1}^{(2)}$.
  Lemmas \ref{lem_comm1} and \ref{extensionlem}, the H\"older inequality,
\eqref{eq4.6}, \eqref{eq4.7} and \eqref{eq4.5} show
  \EQQS{
    J_{\I,3,1}^{(3)}
    &\lesssim \sum_{N_1,\dots,N_{k+2}}\sum_{M\lesssim N_3}
      \om_{N_1}^2 N_1^{2s}M
      \|P_{N_1}Q_{\gtrsim L}(\1_{t,R}^{\textrm{low}}\check{u})\|_{L_{t,x}^2}
      \|P_{N_2}\1_{t,R}^{\textrm{low}}\check{u}\|_{L_{t,x}^2}
      \prod_{j=3}^{k+2}\|P_{N_j}\check{u}\|_{L_{t,x}^\I}\\
    &\lesssim_k \sum_{N_1,N_3\dots,N_{k+2}}\sum_{M\lesssim N_3}
      \om_{N_1}^2 N_1^{2s-\al}\|P_{N_1}\check{u}\|_{X^{0,1}}
      \|P_{\sim N_1}\1_{t}\check{u}\|_{L_{t,x}^2}
      \prod_{j=3}^{k+2}\|P_{N_j}\check{u}\|_{L_{t,x}^\I}\\
    &\quad+T^{1/4} C_0^k \sum_{N_1\ge N_3}\sum_{M\lesssim N_3}
      \om_{N_1}^2 N_1^{2s-\al-1/12}M^{-1/3}\|P_{N_1}\check{u}\|_{X^{0,1}}
      \|P_{\sim N_1}\check{u}\|_{L_{t}^\I L_x^2}\\
    &\lesssim_k T^{1/4} C_0^{k} \|\check{u}\|_{L_t^\I H_\om^{s}}
      \|\check{u}\|_{X_\om^{s-1,1}}
    \lesssim T^{1/4} C_0^{k} \|u\|_{L_T^\I H_\om^{s}}
      \|u\|_{Z_{\om,T}^s}.
  }
  We can estimate the contribution $J_{\I,3,2}^{(3)}$ by the same way.
  For the contribution $J_{\I,3,3}^{(3)}$,
  similarly to $J_{\I,3,3}^{(2)}$, we see from \eqref{eq4.3},  \eqref{eq4.5} and Lemma \ref{extensionlem} that
  \EQQS{
    J_{\I,3,3}^{(3)}
    &\lesssim_k T^{1/2}
      \|\check{u}\|_{L_{t}^\I H_x^{s(\al)}}^{k-1}
      \sum_{N_1\gtrsim N_3\ge 1}
      \om_{N_1}^2 N_1^{2s-\al}N_3^{\be(\al)}
      \|P_{N_1}\check{u}\|_{L_{t}^\I L_x^2}^2
      \|D_x^{1/2}P_{N_3}\check{u}\|_{X^{0,1}}\\
    &\lesssim T^{1/2} C_0^{k-1}
      \|\check{u}\|_{X^{s(\al)-1,1}}
      \|\check{u}\|_{L_t^\I H_\om^s}^2
    \lesssim_k T^{1/2} C_0^{k} \|u\|_{L_T^\I H_\om^s}^2
  }
  In a similar manner, we can estimate the contribution $J_{\I,3,j}^{(3)}$ for $j=4,\dots,k+2$ by the same bound.

  Finally, we consider the contribution of $\phi_N^2(\xi_3)\xi_3$.
  We may assume that $N_3\gg k N_4$. Otherwise the proof is the same as the contribution of $A_2$.
  When $N_3\gg k N_4$, we can obtain the desired estimate as in Case 2.
  This completes the proof.
\end{proof}

\section{Estimate for the difference}

We provide the estimate (at the regularity $s-1$) for the difference $w$ of two solutions $u,v$ of \eqref{eq1}. In this section, we do not use the frequency envelope, so we always argue on the standard Sobolev space $H^s(\T)$.

\begin{prop}\label{difdif}
  Let $0<T<1$, $\al\in[1,2]$ and $2\ge s\ge s(\al):=1/2+2\be(\al)$.
  Let $u$ and $v$ be two solutions of \eqref{eq1} belonging to $Z^s_T$ and associated with the inital data $u_0\in H^s(\T)$ and $v_0\in H^s(\T)$, respectively.   Then there exists an entire function $ G=G[f] $ that is
increasing  and non negative on $ \R_+ $  such that
  \EQS{\label{eq_difdif}
    \|w\|_{L_T^\I H_x^{s-1}}^2
    \le \|u_0-v_0\|_{H_x^{s-1}}^2 +T^{1/4}
     G(\|u\|_{Z^s_T}+\|v\|_{Z^s_T})
    \|w\|_{Z^{s-1}_T}
    \|w\|_{L_T^\I H_x^{s-1}},
  }
  where we set $w=u-v$.
\end{prop}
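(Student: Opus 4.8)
The plan is to mirror the energy estimate of Proposition~\ref{prop_apri}, but now applied to the equation satisfied by $w=u-v$ and measured at the lower regularity $s-1$. Since $w$ solves
\EQQS{
  \p_t w + L_{\al+1} w + \p_x\big(f(u)-f(v)\big)=0,
}
I would start from
\EQQS{
  \frac{d}{dt}\|P_N w(t)\|_{L_x^2}^2
  =-2\int_\T P_N\p_x\big(f(u)-f(v)\big)\,P_N w\,dx,
}
integrate in time, multiply by $\LR{N}^{2(s-1)}$ and sum over $N$ to obtain an analogue of \eqref{PP}. The essential algebraic input is the factorization
\EQQS{
  f(u)-f(v)=\sum_{k\ge1}\frac{f^{(k)}(0)}{k!}\big(u^k-v^k\big)
  =\sum_{k\ge1}\frac{f^{(k)}(0)}{k!}\,w\sum_{j=0}^{k-1}u^j v^{k-1-j},
}
which exhibits the difference $w$ as an explicit factor and reduces the problem, term by term in $k$, to multilinear expressions of exactly the same shape as in Proposition~\ref{prop_apri}, except that one of the $k+2$ factors is $w$ rather than a solution. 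The convergence of the series, as in \eqref{eq4.4F}--\eqref{eq4.4FF}, follows from Hypothesis~\ref{hyp2} together with \eqref{eq2.3ana}, so it again suffices to prove a single bound of the form $C^k T^{1/4} G(C_0) C_0^k\,\|w\|_{Z^{s-1}_T}\|w\|_{L_T^\I H_x^{s-1}}$ for the $k$-th multilinear term and sum.

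The second step is to run the same symmetrization and frequency-decomposition as in the a priori estimate. After writing the spatial integral via the Fourier multiplier $A(\xi_1,\dots,\xi_{k+2})=\sum_j\phi_N^2(\xi_j)\xi_j$ and splitting into the contributions of $A_1$, $A_2$ and $\phi_N^2(\xi_3)\xi_3$, I would again separate the resonant from the non-resonant regimes using Lemmas~\ref{lem_res1} and \ref{lem_res2}: in non-resonant configurations ($J_{\I,3}$-type terms) the modulation gain $|\Om_{k+1}|\gtrsim|\xi_3|N_1^\al$ (or $|\xi_3+\xi_4|N_1^\al$) lets me transfer a derivative onto the $X^{0,1}$-norm via $Q_{\gtrsim L}$ and \eqref{eq4.6}, while in the resonant ``four high frequencies'' regime I distribute the lost derivative over the $L^4_{t,x}$-factors. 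The crucial difference is the choice of which factor carries $w$: since $w$ is controlled only in $H^{s-1}$, I place $w$ (and its frequency pieces) in the \emph{lower}-regularity slots and use Corollary~\ref{cor_stri2}—which is exactly the $w$-version of the Strichartz bound \eqref{eq_stri1}—to estimate $\|P_N w\|_{L^4_{T,x}}$, whereas the genuine solution factors $u,v$ are estimated in $L^\I_{T,x}$ by Bernstein (as in \eqref{eq4.9}) and contribute the $G(C_0)C_0^k$ growth. The commutator Lemma~\ref{lem_comm1} and the extension Lemma~\ref{extensionlem} are used verbatim as before to handle the $\Pi$-terms and to pass between the time-restricted and global spaces.

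The main obstacle, and the reason the difference is estimated at level $s-1$ rather than $s$, is that the $w$-equation enjoys fewer symmetries: one cannot freely symmetrize in all $k+2$ variables because the $w$-factor is distinguished, so the clean ``distribute the lost derivative equally'' argument of Proposition~\ref{prop_apri} is not directly available. Concretely, I expect the worst case to be the resonant configuration with the high frequency sitting on the $w$-factor, where the derivative count must close using the deficit built into Corollary~\ref{cor_stri2} (the shifts by $s-1-\be(\al)$ and $-5/12$ in \eqref{eq_stri3}--\eqref{eq_stri4}); verifying that the resulting exponents still sum to something nonpositive, so that the high-frequency sums converge, is where the restriction $s\ge s(\al)$ re-enters and where the bookkeeping is most delicate. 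Once each $I_k$-type term is bounded by $C^k T^{1/4}G(\|u\|_{Z^s_T}+\|v\|_{Z^s_T})(\|u\|_{Z^s_T}+\|v\|_{Z^s_T})^k\|w\|_{Z^{s-1}_T}\|w\|_{L_T^\I H_x^{s-1}}$, summing the series against the entire function $f$ yields \eqref{eq_difdif}.
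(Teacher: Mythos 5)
Your overall strategy is the paper's: derive the $H^{s-1}$ energy identity for $w$, factor $f(u)-f(v)=\sum_k\frac{f^{(k)}(0)}{k!}\,w\sum_i u^iv^{k-1-i}$, sum the series as in the a priori estimate, and close each $k$-linear term by combining the resonance Lemmas \ref{lem_res1}--\ref{lem_res2} with the Bourgain-norm gain in non-resonant regimes and the difference Strichartz estimates of Corollary \ref{cor_stri2} in resonant ones. Two points, however, do not survive scrutiny as written. First, the decomposition you announce in your second paragraph --- symmetrizing to the multiplier $A(\xi_1,\dots,\xi_{k+2})=\sum_j\phi_N^2(\xi_j)\xi_j$ and splitting into $A_1$, $A_2$ and $\phi_N^2(\xi_3)\xi_3$ --- is not available here: the integrand $\zz^k\,w\,P_N^2\p_x w$ is symmetric only under exchange of the two $w$-factors, so the only multiplier you can produce is $\phi_N^2(\xi_1)\xi_1+\phi_N^2(\xi_2)\xi_2$, i.e.\ the commutator structure $\Pi(P_{N_1}w,P_{N_2}w)$; the derivative cannot be shifted onto the $u,v$ factors by symmetry. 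You concede exactly this in your last paragraph, but the plan is never revised to account for it.

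Second --- and this is the substantive gap --- the configuration that actually drives the proof is not ``the high frequency sitting on the $w$-factor'' in the resonant regime, but the paper's Case 3: $N_1\sim N_3\sim N\gg N_2\vee N_4$, i.e.\ the derivative lands on a high-frequency piece of $w$ whose only comparable-frequency partner is a solution factor $z_3\in\{u,v\}$, while the second $w$ sits at much lower frequency. There $\Pi(P_{N_1}w,P_{N_2}w)$ yields no cancellation (it is $\sim N$ times the product since $N_2\ll N_1$), the non-resonance must be exploited between $w$ and $z_3$ rather than between the two $w$'s, and when $N_2$ is not dominant one must further decompose according to the output frequency $M$ of the product of the remaining low-frequency factors in order to apply Lemma \ref{lem_res2}, falling back on the $L^3_TL^4_x$ estimates \eqref{eq_stri2} and \eqref{eq_stri4} when $M$ is comparable to the small input frequencies. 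This sub-analysis (Subcases 3.1--3.3 in the paper) is where most of the work and the constraint $s\ge s(\al)$ actually live for the difference estimate; your proposal neither identifies this configuration nor supplies a mechanism for it, so as it stands the argument would not close.
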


\begin{proof}
  According to Lemma \ref{lem1}, we notice that $u,v\in Z_T^s$.
  Observe that $w$ satisfies
  \EQS{\label{eq_w}
    \p_t w+L_{\al+1}w=-\p_x(f(u)-f(v))
  }
 Rewriting $ f(u)-f(v) $ as
 \EQQS{
   f(u)-f(v)=\sum_{k\ge 1} \frac{f^{(k)}(0)}{k!} (u^k-v^k)=\sum_{k\ge 1} \frac{f^{(k)}(0)}{k!}w \sum_{i=0}^{k-1} u^i  v^{k-1-i}
 }
 and arguing as in the proof of Proposition \ref{prop_apri}, we see from \eqref{eq_w} that for $t\in[0,T]$
  \EQQS{
    \|w(t)\|_{H_x^{s-1}}^2
    \le \|w_0\|_{H_x^{s-1}}^2
      +  2\sum_{k\ge 1}    \frac{|f^{(k)}(0)|}{(k-1)!}  \max_{i\in \{0,..,k-1\}} I_{k,i}^t ,
  }
  where $w_0=u_0-v_0$ and
  \EQQS{
    I_{k,i}^t:= \sum_{N\ge 1}  N^{2(s-1)} \bigg|\int_0^t\int_\T  u^i  v^{k-1-i} wP_N^2\p_x w\,  dxdt'\bigg| \; .
  }
  It is clear that $I^t_{1,0}=0$ by the integration by parts.
  Therefore we are reduced to estimate  the contribution of
  \EQS{\label{Ik}
  I_{k+1}^t=  \sum_{N\ge 1}  N^{2(s-1)} \bigg|\int_0^t\int_\T  \textbf{z}^k wP_N^2\p_x w\,  dxdt'\bigg| \;
  }
  where $\textbf{z}^k $ stands for $ u^i v^{k-i} $ for some $i\in \{0,..,k\}
$.  We set
  \EQQS{
    C_0:=\|u\|_{Z^s_T}+\|v\|_{Z^s_T}.
  }
  We claim  that for any $ k\ge 1$ it holds
  \EQS{\label{eq4.4FFFdif}
  I_{k+1}^t \le C^k T^{1/4} C_0^k G(C_0)
    \|w\|_{Z^{s-1}_T}
    \|w\|_{L_T^\I H^{s-1}_x}.
   }
  that  clearly  leads \eqref{eq_difdif} by taking  \eqref{PP} and  \eqref{eq4.4FF} into account
  since  $\sum_{k\ge 1}  \frac{|f^{(k+1)}(0)|}{k!} C^k C_0^k <\infty$.

 In the sequel we fix $ k\ge 1 $ and we estimate $ I_{k}^t $.
 We also use the notation $a\lesssim_k b$ defined in \eqref{eq_lesssim}.
 The contribution of the sum over  $N\lesssim 1$ in \eqref{Ik}  is easily
estimated thanks to \eqref{eq2.2} by
  \EQS{\label{eq5.1}
    \begin{aligned}
      &\sum_{N\lesssim1} (1\vee N)^{2(s-1)}\bigg|\int_0^t\int_\T \zz^{k}wP_N^2\p_x w dxdt'\bigg|\\
      &\lesssim T\sum_{N\lesssim 1}\|w\|_{L_T^\I H_x^{s-1}}
      \|\zz^k P_N^2\p_x w\|_{L_T^\I H_x^{1-s}}
      \lesssim_k T C_0^{k} \|w\|_{L_T^\I H_x^{s-1}}^2.
    \end{aligned}
  }
  In the last inequality, we used $1-s<1/2$.
  Therefore, in what follows, we can assume that $N\gg 1.$
  A similar argument to \eqref{eq_4.8} yields
  \EQQS{
    &\sum_{N\gg1} N^{2(s-1)}\bigg|\int_0^t\int_\T \zz^{k}wP_N^2\p_x w dxdt'\bigg|\\
    &\le \sum_{N\gg1}\sum_{N_1,\dots,N_{k+2}}N^{2(s-1)}\bigg|\int_0^t\int_\T \Pi(P_{N_1}w,P_{N_2}w)\prod_{j=3}^{k+2}P_{N_j}z_j dxdt'\bigg|
    =:J_t,
  }
  where $\Pi(f,g)$ is defined by \eqref{def_pi} and $ z_i\in \{u,v\} $ for $ i\in\{3,..,k+2\} $.
  By  symmetry, we may assume that $N_1\ge N_2$. Moreover, we may assume that $ N_3\ge N_4 $ for $k=2 $ and  $N_3\ge N_4\ge N_5=\max_{j\ge 5}N_j$ for $ k\ge 3$. Note again that
   the cost of this choice is a constant factor  less than $(k+2)^4 $. It
is also worth noticing that the frequency projectors in  $\Pi(\cdot,\cdot)$ ensure that $ N_1\sim N $ or $ N_2\sim N $ and in particular
   $ N_1\gtrsim N $. We also remark that we can assume that $ N_3\ge 1 $ since the contribution of $ N_3=0 $ does vanish by integration by parts. Finally we note that we can also assume that $ N_2\ge 1 $ since in the case $ N_2=0 $ we must have $ N_3 \gtrsim N_1/k $ and it is easy to check that
\EQQS{
J_t & \lesssim k T\|w\|_{L^\infty_T H_x^{s-1}} \|z_3 \|_{L^\infty_T H_x^s}  \|P_0 w\|_{L^\I_{T,x}}
\prod_{j=4}^{k+2} \|z_j\|_{L^\infty_T H_x^{s}}
\lesssim_k T C_0^{k} \|w\|_{L^\infty_T H_x^{s-1}}^2 \; .
}

  We consider the contribution of $J_t$, dividing it into three cases:
  \begin{itemize}
    \item $N_1\lesssim k N_4$   ($k\ge 2$),
    \item $N_1\gg k N_4$ and $N_2\gtrsim N_3$ (or $k=1 $ and $N_2\gtrsim N_3$),
    \item $N_1\gg k N_4$ and $N_2\ll N_3$ (or $k=1 $ and $N_2\ll N_3$).
  \end{itemize}

  \noindent
  \textbf{Case 1: $N_1\lesssim k N_4$ ($k\ge 2$).}

  H\"older's and Young's inequalities, \eqref{eq4.9}, \eqref{eq_stri1} and \eqref{eq_stri3} imply that
   \EQQS{
     J_t
     &\le\sum_{N_1,\dots,N_{k+2}}(N_1^{2s-1}+N_2^{2s-1})
       \prod_{j=1}^2\|P_{N_j}w\|_{L_{T,x}^4}
       \prod_{n=3}^4\|P_{N_n}z_n\|_{L_{T,x}^4}
       \prod_{j=5}^{k+2}\|P_{N_j}z_j\|_{L_{T,x}^{\I}}\\
     &\lesssim_k C_0^{k-2}
       \sum_{N_3\ge N_4\gtrsim k^{-1} N_1\ge  k^{-1}  N_2}N_1^{2s-1}
       \|P_{N_1}w\|_{L_{T,x}^4}\|P_{N_2}w\|_{L_{T,x}^4}
       \|P_{N_3}z_3\|_{L_{T,x}^4}  \|P_{N_4}z_4\|_{L_{T,x}^4}\\
     &\lesssim_k C_0^{k-2}
       \sum_{N_3\ge N_4\gtrsim k^{-1}  N_1\ge k^{-1}  N_2} \hspace*{-10mm} k^{2s-2\be(\al)}
       \bigg(\frac{N_1}{k N_4}\bigg)^{2s-\be(\al)-1/2}
       \bigg(\frac{N_2}{k N_4}\bigg)^{1/2-\be(\al)}   \bigg(\frac{N_4}{N_3}\bigg)^{s-\beta(\al)}
       \\
     &\quad\times \prod_{j=1}^2
       \|D_x^{-1/2+\be(\al)} P_{N_j}w\|_{L_{T,x}^4}
       \prod_{n=3}^4
       \|D_x^{s-\be(\al)} P_{N_n}z_n\|_{L_{T,x}^4}\\
     &\lesssim_k T^{1/2} C_0^{k} G(C_0) \|w\|_{L_T^\I H_x^{s(\al)-1}}^2
   }
   since $2s-\beta(\al)-1/2>0 $ and  $ s-\beta(\al)>1/2-\be(\al) >0 $. Note that in the last step we also used that $-1/2+\beta(\alpha)=s(\alpha)-1-\beta(\alpha) $ since  $s(\alpha)=1/2+2\beta(\alpha)$.

  \noindent
  \textbf{Case 2: $N_1\gg k N_4$ and $N_2\gtrsim N_3$ (or $ k=1 $ and $N_2\gtrsim  N_3$).}

  In this case, the contribution of $J_t$ in this case can be estimated by the same way as the contribution of $A_1$ in Proposition \ref{prop_apri},
  replacing $N^{2s}$, $P_{N_1}u$, $P_{N_2}u$, $P_{N_j}u$ for $j=3,\dots,k+2$ by $N^{2(s-1)}$, $P_{N_1}w$, $P_{N_2}w$, $P_{N_j}z_j$ for $j=3,\dots,k+2$, respectively.

  \if0
  As the result, we can obtain
  \EQQS{
    J_t
    \le CT^{1/4}
    (\|w\|_{X^{s-2,1}}+\|w\|_{L_t^\I H_x^{s-1}})
    \|w\|_{L_t^\I H_x^{s-1}}
  }
  for $s\ge s(\al)$.
  \fi

  \noindent
  \textbf{Case 3: $N_1\gg k N_4$ and $N_2\ll N_3$ (or $ k=1 $ and $N_2\ll N_3$).}

  Note that in this case $N_1\sim N_3\sim N \gg N_2\vee N_4$.
  We further divide the contribution of $J_t$ into three cases:
  \begin{itemize}
    \item  $N_2\gg  k N_4$ or $ k=1$,
    \item $ k N_4\gtrsim N_2 \gtrsim N_5 $ (or $k=2$ and $N_4\gtrsim N_2$),
    \item $N_2 \ll N_5$ ($k\ge 3$).
  \end{itemize}

  \underline{Subcase 3.1: $N_2\gg k N_4$ or $ k=1$.}
Recall that we have $ N_2\ge 1 $ and thus this subcase contains the case $ N_4 =0 $.
  We take the extensions $\check{w}=\rho_T(w)$ (resp. $\check{z}_j=\rho_T(z_j)$ for $j\ge 3$) of $w$ (resp. $z_j$ for $j\ge3$) defined in \eqref{def_ext}.
  For simplicity, with a slight abuse of notation, we shall use the following notation:
  \EQQS{
    J_\I^{(3.1)}(u_1,\cdots,u_{k+2})
    :=\sum_{N\gg 1}\sum_{N_1,\dots,N_{k+2}} N^{2(s-1)}
      \bigg|\int_\R\int_\T\Pi(u_1,u_2)
      \prod_{j=3}^{k+2}u_jdxdt'\bigg|.
  }
  Setting $R=N_1^{1/3}N_2^{4/3}$, we divide $J_t$ as
  \EQQS{
    J_t
    &\le J_\I^{(3.1)}(P_{N_1}\1_{t,R}^{\textrm{high}}\check{w},
      P_{N_2}\check{w},P_{N_3}\1_t\check{z}_3,P_{N_4}\check{z}_4,\cdots, P_{N_{k+2}}\check{z}_{k+2})\\
    &\quad+J_\I^{(3.1)}(P_{N_1}\1_{t,R}^{\textrm{low}}\check{w},
      P_{N_2}\check{w},
      P_{N_3}\1_{t,R}^{\textrm{high}}\check{z}_3,
      P_{N_4}\check{z}_4,\cdots,
      P_{N_{k+2}}\check{z}_{k+2})\\
    &\quad+J_\I^{(3.1)}(P_{N_1}\1_{t,R}^{\textrm{low}}\check{w},
      P_{N_2}\check{w},
      P_{N_3}\1_{t,R}^{\textrm{low}}\check{z}_3,
      P_{N_4}\check{z}_4,\cdots,
      P_{N_{k+2}}\check{z}_{k+2})\\
    &=:J_{\I,1}^{(3.1)}+J_{\I,2}^{(3.1)}+J_{\I,3}^{(3.1)}.
  }
  For $J_{\I,1}^{(3.1)}$, we see from \eqref{eq4.1} that $\|\1_{t,R}^{\textrm{high}}\|_{L^1}\lesssim T^{1/4}N_1^{-1/4}N_2^{-1}$, which gives
  \EQQS{
    J_{\I,1}^{(3.1)}
    &\lesssim \sum_{N_1,\dots,N_{k+2}}N_1^{2s-1}
    \|\1_{t,R}^{\textrm{high}}\|_{L^1}\|P_{N_1}\check{w}\|_{L_t^\I L_x^2}
    \|P_{N_2}\check{w}\|_{L_{t,x}^\I}
    \|P_{N_3}\check{z}_3\|_{L_t^\I L_x^2}
    \prod_{j=4}^{k+2}\|P_{N_j}\check{z}_j\|_{L_{t,x}^\I}\\
    &\lesssim_k T^{1/4} C_0^{k-1} \sum_{N_1,N_2}N_1^{2s-5/4}N_2^{-1}
    \|P_{N_1}\check{w}\|_{L_t^\I L_x^2}
    \|P_{N_2}\check{w}\|_{L_{t,x}^\I}
    \|P_{\sim N_1}\check{z}_3\|_{L_t^\I L_x^2}\\
    &\lesssim_k T^{1/4} C_0^{k}
    \|\check{w}\|_{L_t^\I H_x^{s-1}}^2
    \lesssim_k T^{1/4} C_0^{k}
    \|w\|_{L_T^\I H_x^{s-1}}^2.
  }
  Here, in the first inequality, we used $N_2^{2s-1}\le N_1^{2s-1}$ since $s\ge s(\al)>1/2$.
  In the last inequality, we also used \eqref{eq2.1single}.
  In a similar manner, we can also estimate $J_{\I,2}^{(3.1)}$ by the same bound as that of $J_{\I,1}^{(3.1)}$.
  For $J_{\I,3}^{(3.1)}$, we note that $|\Om_{k+1}|\gtrsim N_2 N_1^\al=:L$ by Lemma \ref{lem_res1}.
  This enables us to decompose $J_{\I,3}^{(3.1)}$ as
  \EQQS{
    J_{\I,3}^{(3.1)}
    &\le J_\I^{(3.1)}(P_{N_1}Q_{\gtrsim L}(\1_{t,R}^{\textrm{low}}\check{w}),
      P_{N_2}\check{w},
      P_{N_3}\1_{t,R}^{\textrm{low}}\check{z}_3,\cdots,
      P_{N_{k+2}}\check{z}_{k+2})\\
    &\quad +J_\I^{(3.1)}(P_{N_1}Q_{\ll L}(\1_{t,R}^{\textrm{low}}\check{w}),
      P_{N_2}\check{w},
      P_{N_3}Q_{\gtrsim L}(\1_{t,R}^{\textrm{low}}\check{z}_3),\cdots,
      P_{N_{k+2}}\check{z}_{k+2})\\
    &\quad +J_\I^{(3.1)}
      (P_{N_1}Q_{\ll L}(\1_{t,R}^{\textrm{low}}\check{w}),
      P_{N_2}Q_{\gtrsim L}\check{w},
      P_{N_3}Q_{\ll L}(\1_{t,R}^{\textrm{low}}\check{z}_3),\cdots,
      P_{N_{k+2}}\check{z}_{k+2})+\cdots\\
    &\quad
     +J_\I^{(3.1)}(P_{N_1}Q_{\ll L}(\1_{t,R}^{\textrm{low}}\check{w}),
        P_{N_2}Q_{\ll L}\check{w},
        P_{N_3}Q_{\ll L}(\1_{t,R}^{\textrm{low}}\check{z}_3),\cdots,
        P_{N_{k+2}}Q_{\gtrsim L}\check{z}_{k+2})\\
    &=:J_{\I,3,1}^{(3.1)}+\cdots+J_{\I,3,k+2}^{(3.1)},
  }
  where $J_{\I,3,n}^{(3.1)}$ for $4\le n\le k+2$ corresponds to the term in which $Q_{\gtrsim L}$ lands on $P_{N_n}\check{z}_n$.
  It is worth noting that $R\ll L$.
  Then, the H\"older inequality, \eqref{eq4.6}, \eqref{eq4.7} and Lemma \ref{extensionlem} imply that
  \EQQS{
    &J_{\I,3,1}^{(3.1)}\\
    &\lesssim \sum_{N_1,\dots,N_{k+2}}N_1^{2s-1}
    \|P_{N_1}Q_{\gtrsim L}(\1_{t,R}^{\textrm{low}}\check{w})\|_{L_{t,x}^2}
    \|P_{N_2}\check{w}\|_{L_{t,x}^\I}
    \|P_{N_3}\1_{t,R}^{\textrm{low}}\check{z}_3\|_{L_{t,x}^2}
    \prod_{j=4}^{k+2}\|P_{N_j}\check{z}_j\|_{L_{t,x}^\I}\\
    &\lesssim_k C_0^{k-1}\sum_{N_1,N_2}
      N_1^{2s-1-\al}
      \|P_{N_1}\check{w}\|_{X^{0,1}}
      \|P_{N_2}\check{w}\|_{L_{t}^\I H_x^{-1/2}}
      \|P_{\sim N_1}\1_{t,R}^{\textrm{low}}\check{z}_3\|_{L_{t,x}^2}\\
    &\lesssim_k C_0^{k-1} \sum_{N_1,N_2}
      N_1^{2s-1-\al}
      \|P_{N_1}\check{w}\|_{X^{0,1}}
      \|P_{N_2}\check{w}\|_{L_{t}^\I H_x^{-1/2}}
      \|P_{\sim N_1}\1_{t}\check{z}_3\|_{L_{t,x}^2}\\
    &\quad+T^{1/4} C_0^{k-1} \sum_{N_1,N_2}
      N_1^{2s-13/12-\al}N_2^{-1/3}
      \|P_{N_1}\check{w}\|_{X^{0,1}}
      \|P_{N_2}\check{w}\|_{L_{t}^\I H_x^{-1/2}}
      \|P_{\sim N_1}\check{z}_3\|_{L_t^\I L_x^2}\\
    &\lesssim_k T^{1/4} C_0^{k}
      \|\check{w}\|_{L_t^\I H_x^{s-1}}\|\check{w}\|_{X^{s-2,1}}
    \lesssim_k T^{1/4} C_0^{k}
      \|w\|_{Z_T^{s-1}}\|w\|_{L_T^\I H_x^{s-1}}.
  }
  By the same way, we can obtain
  \EQQS{
    J_{\I,3,2}^{(3.1)}
    \lesssim_k T^{1/4} C_0^{k-1}
      \|\check{w}\|_{L_t^\I H_x^{s-1}}^2
      \|\check{z}_3\|_{X^{s-1,1}}
    \lesssim_k T^{1/4} C_0^{k}
     \|w\|_{L_T^\I H_x^{s-1}}^2.
  }
  Next, we consider the contribution of $J_{\I,3,3}^{(3.1)}$.
  Lemma \ref{extensionlem}, the H\"older inequality and \eqref{eq4.3} show
  \EQQS{
    J_{\I,3,3}^{(3.1)}
    &\lesssim \sum_{N_1,\dots,N_{k+2}}N_1^{2s-1}
      \|P_{N_1}\1_{t,R}^{\textrm{low}}\check{w}\|_{L_{t,x}^2}
      \|P_{N_2}Q_{\gtrsim L} \check{w}\|_{L_{t}^2 L_x^\I}
      \|P_{N_3}\check{z}\|_{L_{t}^\I L_x^2}
      \prod_{j=4}^{k+2}\|P_{N_j}\check{z}_j\|_{L_{t,x}^\I}\\
    &\lesssim_k T^{1/2} C_0^{k-1}
      \sum_{N_1\ge N_2}N_1^{2s-1-\al}N_2^{-1}
      \|P_{N_1}\check{w}\|_{L_{t}^\I L_x^2}
      \|D_x^{1/2} P_{N_2} \check{w}\|_{X^{0,1}}
      \|P_{\sim N_1}\check{z}\|_{L_{t}^\I L_x^2}\\
    &\lesssim_k T^{1/2} C_0^{k-1}
      \|\check{w}\|_{L_t^\I H_x^{s-1}} \|\check{z}_3\|_{L_t^\I H_x^{s}}
    \sum_{N_1\ge N_2} N_1^{-\al} N_2^{-1}\|P_{N_2}\check{w}\|_{X^{1/2,1}}\\
    &\lesssim_k T^{1/2} C_0^{k}
    \|\check{w}\|_{X^{s-2,1}}
    \|\check{w}\|_{L_t^\I H_x^{s-1}}
    \lesssim T^{1/2} C_0^{k}
    \|w\|_{Z_T^{s-1}}
    \|w\|_{L_T^\I H_x^{s-1}}
  }
  since $s\ge s(\al)>1/2$.
  Similarly, we obtain
  \EQQS{
    &J_{\I,3,4}^{(3.1)}\\
    &\lesssim_k C_0^{k-2} \sum_{N_1,N_2,N_4}N_1^{2s-1}
    \|P_{N_1}\1_{t,R}^{\textrm{low}}\check{w}\|_{L_{t,x}^2}
    \|P_{N_2}\check{w}\|_{L_{t,x}^\I}
    \|P_{\sim N_1}\check{z}_3\|_{L_{t}^\I L_x^2}
    \|P_{N_4}Q_{\gtrsim L}\check{z}_4\|_{L_t^2 L_x^\I}\\
    &\lesssim_k T^{1/2} C_0^{k-2}
     \sum_{N_1,N_2,N_4}N_1^{2s-\al-1} N_2^{-1}
    \|P_{N_1}\check{w}\|_{L_{t}^\I L_x^2}
    \|P_{N_2} \check{w}\|_{L_{t}^\I H_x^{1/2}}\\
    &\quad\quad\times
    \|P_{\sim N_1}\check{z}_3\|_{L_{t}^\I L_x^2}
    \|P_{N_4}\check{z}_4\|_{X^{1/2,1}}\\
    &\lesssim T^{1/2} C_0^{k-2}
    \|\check{w}\|_{L_t^\I H_x^{s-1}}
    \|\check{w}\|_{L_t^\I H_x^{-1/2}}
    \|\check{z}_3\|_{L_t^\I H_x^{s}}
    \sum_{N_1\ge N_4} N_1^{-\al}
    \|P_{N_4}\check{z}_4\|_{X^{1/2,1}}\\
    &\lesssim T^{1/2}C_0^{k-1}
    \|\check{z}_4\|_{X^{s(\al)-1,1}}
    \|\check{w}\|_{L_t^\I H_x^{s-1}}^2
    \lesssim_k T^{1/2} C_0^k
    \|w\|_{L_T^\I H_x^{s-1}}^2.
  }
  We see from the above argument that $J_{\I,3,j}^{(3.1)}$ for $5\le j\le k+2$ can be estimated by the same bound as that of $J_{\I,3,4}^{(3.1)}$.

  \underline{Subcase 3.2: $N_5 \lesssim  N_2\lesssim kN_4$ (or $k=2$ and $N_4\gtrsim N_2$).} Note that the cases $ N_2=0 $ or $ N_4=0 $ have
been already treated so that we can assume that $ N_2\ge 1 $ and $ N_4\ge
1$.
  It suffices to consider the case $N_5\lesssim N_2\lesssim N_4$ since $k\ge 1$.
  First we set
  \EQQS{
    J_t
    &\le \sum_{N\gg1}\sum_{N_1,\dots,N_{k+2}}N^{2(s-1)}
    \bigg|\int_0^t\int_\T \p_x P_N^2 P_{N_1}wP_{N_2}w\prod_{j=3}^{k+2}P_{N_j}z_j\, dxdt'\bigg|\\
    &\quad+\sum_{N\gg1}\sum_{N_1,\dots,N_{k+2}}N^{2(s-1)}
    \bigg|\int_0^t\int_\T  P_{N_1}w\p_x P_N^2P_{N_2}w\prod_{j=3}^{k+2}P_{N_j}z_j\,  dxdt'\bigg|\\
    &=:I_t^{(3.2)}+J_t^{(3.2)}.
  }
  For $I_t^{(3.2)}$, we further divide it as in Case 3 in Proposition \ref{prop_apri}:
  \EQQS{
    &I_t^{(3.2)}\\
    &\le \sum N^{2(s-1)}
    \bigg|\int_0^t\int_\T \p_x P_N^2 P_{N_1}w P_{N_3}z_3
    P_{\lesssim  k N_5\vee 1} \bigg(P_{N_2}w\prod_{j=4}^{k+2}P_{N_j}z\bigg) dxdt'\bigg|\\
    &\quad+ \sum\sum_{k N_5\vee 1\ll M\lesssim  N_4} N^{2(s-1)}
    \bigg|\int_0^t\int_\T \p_x P_N^2 P_{N_1}w P_{N_3}z_3
    P_{M} \bigg(P_{N_2}w\prod_{j=4}^{k+2}P_{N_j}z_j \, \bigg) dxdt'\bigg|\\
    &=:I_{t,1}^{(3.2)}+I_{t,2}^{(3.2)},
  }
  where $\sum$ denotes $\sum_{N\gg1}\sum_{N_1,\dots,N_{k+2}}$ for the sake of simplicity.
  Remark that if $k=2$, then the term $I_{t,1}^{(3.2)}$ does not appear, and it holds that $I_t^{(3.2)}\le I_{t,2}^{(3.2)}$ with the summation $\sum_{M\lesssim N_4}$ instead of $\sum_{k N_5\vee 1\ll M\lesssim  N_4}$.
  For $I_{t,1}^{(3.2)}$, H\"older's, Bernstein's and Minkowski's inequalities, \eqref{eq_stri2} and \eqref{eq_stri4} give
  \EQQS{
  \begin{aligned}
    I_{t,1}^{(3.2)}
    &\lesssim\sum_{N_1,\dots,N_{k+2}}N_1^{2s-1}(k^{3/4} N_5^{3/4}\vee 1)
    \int_0^t\|P_{N_1}w\|_{L_x^2}\|P_{N_3}z_3\|_{L_x^2}
    \|P_{N_2}w\|_{L_x^4}\\
    &\quad\times\|P_{N_4}z_4\|_{L_x^4}
    \|P_{N_5}z_5\|_{L_x^4}
    \prod_{j=6}^{k+2}\|P_{N_j}z_j\|_{L_x^\I}dt'\\
    &\lesssim_k C_0^{k-3}
      \|w\|_{L_T^\I H_x^{s-1}}\|z_3\|_{L_T^\I H_x^{s}}
      \sum_{ N_4\gtrsim N_2 \gtrsim  N_5} \bigg(\frac{N_2}{N_4}\bigg)^{5/12}
      \bigg(\frac{N_5\vee 1}{N_4}\bigg)^{1/6}\\
    &\quad\times \|D_x^{-5/12}P_{N_2}w\|_{L_T^3 L_x^4}
       \|D_x^{7/12}P_{N_4}z_4\|_{L_T^3 L_x^4}
       \|D_x^{7/12}P_{N_5}z_5\|_{L_T^3 L_x^4}\\
    &\lesssim_k T^{5/8} C_0^{k-2} G(C_0)
      \|w\|_{L_T^\I H_x^{s(\al)-1}}
      \|w\|_{L_T^\I H_x^{s-1}}\prod_{i=4}^5 \|z_i\|_{L_T^\I H_x^{s(\al)}}^2\\
    &\lesssim_k T^{5/8} C_0^{k} G(C_0) \|w\|_{L_T^\I H_x^{s-1}}^2
  \end{aligned}
  }
  since $s(\al)\ge 7/12+\be(\al)$.
  In order to estimate $I_{t,2}^{(3.2)}$, we further decompose it.
  We take the extensions $\check{w}=\rho_T(w)$ (resp. $\check{z}_j=\rho_T(z_j)$ for $j\ge 3$) of $w$ (resp. $z_j$ for $j\ge3$) defined in \eqref{def_ext}.
  As in Subcase 3.1, with a slight abuse of notation, we shall use the followig notation:
  \EQQS{
    &I_{\I,2}^{(3.2)}(u_1,\cdots,u_{k+2})\\
    &:=\sum_{N\gg1}\sum_{N_1,\dots,N_{k+2}}
      \sum_{kN_5\vee 1 \ll M\lesssim  N_4} N^{2(s-1)}
      \bigg|\int_\R\int_\T (\p_x P_N^2 u_1) u_3
      P_{M} \bigg(u_2\prod_{j=4}^{k+2}u_j \, \bigg) dxdt'\bigg|.
  }
  Putting $R=N_1^{1/3}M^{4/3}$, we see that
  \EQQS{
    I_{t,2}^{(3.2)}
    &\le I_{\I,2}^{(3.2)}(P_{N_1}\1_{t,R}^{\textrm{high}}\check{w},
      P_{N_2}\check{w},P_{N_3}\1_t\check{z}_3,P_{N_4}\check{z}_4,\cdots, P_{N_{k+2}}\check{z}_{k+2})\\
    &\quad+I_{\I,2}^{(3.2)}(P_{N_1}\1_{t,R}^{\textrm{low}}\check{w},
      P_{N_2}\check{w},
      P_{N_3}\1_{t,R}^{\textrm{high}}\check{z}_3,
      P_{N_4}\check{z}_4,\cdots,
      P_{N_{k+2}}\check{z}_{k+2})\\
    &\quad+I_{\I,2}^{(3.2)}(P_{N_1}\1_{t,R}^{\textrm{low}}\check{w},
      P_{N_2}\check{w},
      P_{N_3}\1_{t,R}^{\textrm{low}}\check{z}_3,
      P_{N_4}\check{z}_4,\cdots,
      P_{N_{k+2}}\check{z}_{k+2})\\
    &=:I_{\I,2,1}^{(3.2)}+I_{\I,2,2}^{(3.2)}+I_{\I,2,3}^{(3.2)}.
  }
  For $I_{\I,2,1}^{(3.2)}$, we see from \eqref{eq4.1} that
  $\|\1_{t,R}^{\textrm{high}}\|_{L^1}\lesssim T^{1/4}N_1^{-1/4}M^{-1}$.
  Then, we have
  \EQQS{
    I_{\I,2,1}^{(3.2)}
    &\lesssim \sum_{N_1,\dots,N_{k+2}}
      \sum_{kN_5\vee 1\ll M\lesssim  N_4}
      N_1^{2s-1}\|\1_{t,R}^{\textrm{high}}\|_{L^1}
      \|P_{N_1}\check{w}\|_{L_t^\I L_x^2}
      \|P_{N_3}\check{z}_3\|_{L_t^\I L_x^2}\\
    &\quad\times\bigg
      \|P_{M} \bigg(P_{N_2}\check{w}
        \prod_{j=4}^{k+2}P_{N_j}
        \check{z}_j\, \bigg)\bigg\|_{L_{t,x}^\I}\\
    &\lesssim T^{1/4}\sum_{N_1,\dots,N_{k+2}}\sum_{ M\lesssim  N_4}N_1^{2s-5/4}\|P_{N_1}\check{w}\|_{L_t^\I L_x^2}\|P_{N_3}\check{z}_3\|_{L_t^\I L_x^2}\\
    &\quad\times\bigg\|P_{N_2}\check{w}
    \prod_{j=4}^{k+2}P_{N_j}\check{z}_j \, \bigg\|_{L_{t}^\I H_x^{-1/2}}.
  }
  Note that  \eqref{eq2.3} leads to
  \EQS{\label{eq5.3}
  \sum_{N_5,\dots,N_{k+2}}\bigg\|P_{N_2}\check{w}
  \prod_{j=4}^{k+2}P_{N_j}\check{z}_j\bigg\|_{H_x^{-1/2}}
  \lesssim_k C_0^{k-2} \|P_{N_4} \check{z}_4 \|_{H^{s_0}} \|P_{N_2}\check{w}\|_{H_x^{-1/2}}
  }
  with $1/2<s_0<s$.
  This together with \eqref{eq4.5} and \eqref{eq2.1single} shows that
  \EQQS{
    I_{\I,2,1}^{(3.2)}
    &\lesssim_k T^{1/4} C_0^{k-2}
    \|\check{w}\|_{L_t^\I H_x^{s-1}}\|\check{w}\|_{L_t^\I H_x^{s-1}}
    \|\check{z}_3\|_{L_t^\I H_x^{s}}  \|\check{z}_4\|_{L_t^\I H_x^{s}}
    \sum_{N_1}N_1^{-1/4}\\
    &\lesssim_k T^{1/4} C_0^{k}
   \|w\|_{L_T^\I H_x^{s-1}}^2.
  }
  Similarly, $I_{\I,2,2}^{(3.2)}$ can be estimated by the same bound as above.
  For $I_{\I,2,3}^{(3.2)}$, Lemma \ref{lem_res2} shows $|\Om_{k+1}|\gtrsim MN_1^\al\sim |\xi_2+\xi_4|N_1^\al$.
  Therefore, setting $L:=MN_1^\al$, we have
  \EQQS{
    I_{\I,2,3}^{(3.2)}
    &\le I_{\I,2}^{(3.2)}(P_{N_1}Q_{\gtrsim L}(\1_{t,R}^{\textrm{low}}\check{w}),
      P_{N_2}\check{w},
      P_{N_3}\1_{t,R}^{\textrm{low}}\check{z}_3,\cdots,
      P_{N_{k+2}}\check{z}_{k+2})\\
    &\quad +I_{\I,2}^{(3.2)}(P_{N_1}Q_{\ll L}(\1_{t,R}^{\textrm{low}}\check{w}),
      P_{N_2}\check{w},
      P_{N_3}Q_{\gtrsim L}(\1_{t,R}^{\textrm{low}}\check{z}_3),\cdots,
      P_{N_{k+2}}\check{z}_{k+2})\\
    &\quad +I_{\I,2}^{(3.2)}
      (P_{N_1}Q_{\ll L}(\1_{t,R}^{\textrm{low}}\check{w}),
      P_{N_2}Q_{\gtrsim L}\check{w},
      P_{N_3}Q_{\ll L}(\1_{t,R}^{\textrm{low}}\check{z}_3),\cdots,
      P_{N_{k+2}}\check{z}_{k+2})+\cdots\\
    &\quad
     +I_{\I,2}^{(3.2)}
       (P_{N_1}Q_{\ll L}(\1_{t,R}^{\textrm{low}}\check{w}),
        P_{N_2}Q_{\ll L}\check{w},
        P_{N_3}Q_{\ll L}(\1_{t,R}^{\textrm{low}}\check{z}_3),\cdots,
        P_{N_{k+2}}Q_{\gtrsim L}\check{z}_{k+2})\\
    &=:I_{\I,2,3,1}^{(3.2)}+\cdots+I_{\I,2,3,k+2}^{(3.2)},
  }
  where $I_{\I,2,3,n}^{(3.2)}$ for $4\le n\le k+2$ corresponds to the term in which $Q_{\gtrsim L}$ lands on $P_{N_n}\check{z}_n$.
  The contribution of $I_{\I,2,3,1}^{(3.2)}$ is estimated, thanks to Lemma \ref{extensionlem}, \eqref{eq4.9}, \eqref{eq4.6}, \eqref{eq4.7} and \eqref{eq5.3}, by
  \EQQS{
    &I_{\I,2,3,1}^{(3.2)}\\
    &\lesssim \sum_{N_1,\dots,N_{k+2}}
      \sum_{1\vee kN_5\ll M\lesssim  N_4}
      N_1^{2s-1-\al}M^{-1}\|P_{N_1}\check{w}\|_{X^{0,1}}
      \|P_{N_3}\1_{t,R}^{\textrm{low}}\check{z}_3\|_{L_{t,x}^2}\\
    &\quad\times\bigg\|P_{M} \bigg(P_{N_2}\check{w}\prod_{j=4}^{k+2}P_{N_j}\check{z}_j\bigg)\bigg\|_{L_{t,x}^\I}\\
    &\lesssim  \sum_{N_1,\dots,N_{k+2}}
       \sum_{M\lesssim  N_4}
      N_1^{2s-2}\|P_{N_1}\check{w}\|_{X^{0,1}}
      \|P_{N_3}\1_{t,R}^{\textrm{low}}\check{z}_3\|_{L_{t,x}^2}
      \bigg\|P_{N_2}\check{w}\prod_{j=4}^{k+2}
      P_{N_j}\check{z}_j\bigg\|_{L_t^\I H^{-1/2}_x}\\
    &\lesssim_k C_0^{k-2} \|\check{w}\|_{L_t^\I H_x^{s-1}}
      \sum_{N_1\gtrsim N_4}\sum_{M\lesssim  N_4}
      (N_1^{2s-2}\|P_{N_1}\check{w}\|_{X^{0,1}}
      \|P_{\sim N_1}\1_{t}\check{z}_3\|_{L_{t,x}^2}  \|P_{N_4}\check{z}_4\|_{L^\I_t  H^{s_0}}\\
    &\quad\quad +T^{1/4} N_1^{2s-25/12} M^{-1/3}
      \|P_{N_1}\check{w}\|_{X^{0,1}}
      \|P_{\sim N_1}\check{z}_3\|_{L_{t}^\I L_x^2}
      \|P_{N_4}\check{z}_4\|_{L^\I_t  H^{s_0}})\\
    &\lesssim_k T^{1/4} C_0^{k-2}
    \|\check{w}\|_{L_t^\I H_x^{s-1}}\|\check{w}\|_{X^{s-2,1}}
    \|\check{z}_3\|_{L_t^\I H_x^{s}}
    \|\check{z}_4\|_{L_t^\I H_x^{s}}
    \lesssim_k T^{1/4} C_0^{k}
    \|w\|_{Z_T^{s-1}}\|w\|_{L_T^\I H_x^{s-1}}
  }
  with $1/2<s_0<s$.
  Here, we used $N_1^{-\al}\le N_1^{-1}$ since $\al\ge1$.
  Similarly, we can estimate $I_{\I,2,3,2}^{(3.2)}$ by
  \EQQS{
    I_{\I,2,3,2}^{(3.2)}
    \lesssim_k T^{1/4}C_0 ^{k}
    \|\check{w}\|_{L_t^\I H_x^{s-1}}^2
    \lesssim_k T^{1/4} C_0^{k}
      \|w\|_{L_T^\I H_x^{s-1}}^2 .
  }
  Next, for $I_{\I,2,3,2}^{(3.2)}$, thanks to Lemma \ref{extensionlem}, \eqref{eq4.1}, \eqref{eq4.2}, \eqref{eq4.3} and \eqref{eq5.3}, we have
  \EQQS{
    &I_{\I,2,3,3}^{(3.2)}\\
    &\lesssim T^{1/2}\sum_{N_1,\dots,N_{k+2}}
    \sum_{M\lesssim  N_4}
      N_1^{2s-1} M
      \|P_{N_1}\check{w}\|_{L_t^\I L_x^2}
      \|P_{N_3}\check{z}_3\|_{L_t^\I L_x^2}\\
    &\quad\times\bigg\|(P_{N_2}Q_{\gtrsim L} \check{w})
    \prod_{j=4}^{k+2}P_{N_j}\check{z}\bigg\|_{L_{t}^2 H_x^{-1/2}}\\
    &\lesssim_k T^{1/2} C_0^{k-2} \|\check{w}\|_{X^{s-2,1}}
      \sum_{N_1\gtrsim N_4}
      \sum_{M\lesssim  N_4}
      N_1^{2s-2} \|P_{N_1}\check{w}\|_{L_t^\I L_x^2}
      \|P_{\sim N_1}\check{z}_3\|_{L_t^\I L_x^2}
      \|P_{N_4}\check{z}_4\|_{L^\I_t  H^{s_0}}\\
    &\lesssim_k T^{1/2} C_0^{k-2}
    \|\check{z}_3\|_{L_t^\I H_x^{s}}
    \|\check{z}_4\|_{L_t^\I H_x^{s}}
    \|\check{w}\|_{X^{s-2,1}}
    \|\check{w}\|_{L_t^\I H_x^{s-1}}
    \lesssim_k T^{1/2} C_0^{k}
    \|w\|_{Z_T^{s-1}}
    \|w\|_{L_T^\I H_x^{s-1}}
  }
  with $1/2<s_0<s$.
  Here, we used $N_1^{-\al}\le N_1^{-1}$.
  By a similar argument, we also have
  \EQQS{
    &I_{\I,2,3,4}^{(3.2)}\\
    &\lesssim T^{1/2}\sum_{N_1,\dots,N_{k+2}}
      \sum_{M\lesssim  N_4}
      N_1^{2s-1} M
      \|P_{N_1}\check{w}\|_{L_t^\I L_x^2}
      \|P_{N_3}\check{z}\|_{L_t^\I L_x^2}\\
    &\quad\times
      \bigg\|(P_{N_2}Q_{\ll MN_1^\al}\check{w})
      (P_{N_4}Q_{\gtrsim MN_1^\al}\check{z}_4)
      \prod_{j=5}^{k+2}P_{N_j}\check{z}_j\, \bigg\|_{L_{t}^2 H_x^{-1/2}}\\
    &\lesssim_k T^{1/2} C_0^{k-2}
    \|\check{w}\|_{L_t^\I H_x^{s-1}}
    \sum_{N_1\gtrsim N_4}
    \sum_{M\lesssim  N_4}
    N_1^{2s-2}\|P_{N_1}\check{w}\|_{L_t^\I L_x^2}
    \|P_{\sim N_1}\check{z}_3\|_{L_t^\I L_x^2}
    \|P_{N_4}\check{z}_4\|_{X^{s_0,1}}\\
    &\lesssim_k T^{1/2} C_0^{k-2}
    \|\check{w}\|_{L_t^\I H_x^{s-1}}
    \|\check{z}_4\|_{X^{s(\al)-1,1}}
    \sum_{N_1} N_1^{s_0-s(\al)} N_1^{2s-1}
    \|P_{N_1}\check{w}\|_{L_t^\I L_x^2}
    \|P_{\sim N_1}\check{z}_3\|_{L_t^\I L_x^2}\\
    &\lesssim T^{1/2} C_0^{k-2}
    \|\check{z}_3\|_{L_t^\I H_x^{s}}
    \|\check{z}_4\|_{X^{s(\alpha)-1,1}}
    \|\check{w}\|_{L_t^\I H_x^{s-1}}^2
    \lesssim_k T^{1/2} C_0^{k} \|w\|_{L_T^\I H_x^{s-1}}^2.
  }
  Here, we chose $s_0\in (1/2,s(\al))$ and used $N_1^{-\al}\le N_1^{-1}$.
  Similarly, we can estimate $I_{\I,2,3,n}^{(3.2)}$ for $5\le n\le k+2$ by the same bound as that of $I_{\I,2,3,4}^{(3.2)}$,
  which completes the estimate of the contribution of $I_t^{(3.2)}$.
  On the other hand, the contribution of $J_t^{(3.2)}$ can be controlled by $I_t^{(3.2)}$ since $s>1/2$ and $N_2^{2s-1}\le N_1^{2s-1}$.


  \underline{Subcase 3.3: $N_2 \ll N_5$ ($k\ge 3$).} Recall that we can assume $N_2\ge 1 $ and note  that we can  also assume that $ N_5 \ge 1 $ since $ N_5=0 $ is included in Subcase 3.2: $N_2\gtrsim N_5$.
  As in Subcase 3.2, we evaluate $I_t^{(3.2)}$ and $J_t^{(3.2)}$.
  Define $L_k$ for $k\ge 3$ so that $L_3:=N_2$ and $L_k:=N_2\vee kN_6$ for $k\ge 4$.
  We decompose
  \EQQS{
    I_t^{(3.2)}
    &\le \sum N^{2(s-1)}
    \bigg|\int_0^t\int_\T \p_x P_N^2 P_{N_1}w P_{N_3}z_3
    P_{\lesssim L_k} \bigg(P_{N_2}w\prod_{j=4}^{k+2}P_{N_j}z_j\bigg) dxdt'\bigg|\\
    &\quad+ \sum \sum_{L_k\ll M\lesssim N_4} N^{2(s-1)}
    \bigg|\int_0^t\int_\T \p_x P_N^2 P_{N_1}w P_{N_3}z_3
    P_{M} \bigg(P_{N_2}w\prod_{j=4}^{k+2}P_{N_j}z_j\bigg) dxdt'\bigg|\\
    &=:I_{t,1}^{(3.3)}+I_{t,2}^{(3.3)},
  }
  where $\sum$ denotes $\sum_{N\gg1}\sum_{N_1,\dots,N_{k+2}}$ for simplicity.
  In what follows, we may assume that $k\ge 4$ since the case $k=3$ can
be treated by the same way as Subcase 3.2.
  For $I_{t,1}^{(3.3)}$,
  H\"older's, Bernstein's and Minkowski's inequalities, \eqref{eq_stri2} and \eqref{eq_stri4} give
  \EQQS{
    &I_{t,1}^{(3.3)}\\
    &\lesssim\biggr(\sum_{N_1,\dots,N_{k+2}\atop N_2\ge k N_6}N_1^{2s-1}N_2^{3/4}
      +\sum_{N_1,\dots,N_{k+2}\atop N_2< k N_6}N_1^{2s-1}(k N_6)^{3/4}\biggr)\\
    &\quad\quad\times\int_0^t\|P_{N_1}w\|_{L_x^2}
      \|P_{N_3}z_3\|_{L_x^2}
      \|P_{N_2}w\|_{L_x^4}\|P_{N_4}z_4\|_{L_x^4}
      \|P_{N_5}z_5\|_{L_x^4}
      \prod_{j=6}^{k+2}\|P_{N_j}z_j\|_{L_x^\I}dt'\\
    &\lesssim_k C_0^{k-2}
      \|w\|_{L_T^\I H_x^{s-1}}
      \sum_{N_4\ge N_5\gg  N_2}
     \|D_x^{-5/12}P_{N_2}w\|_{L_T^3 L_x^4}
        \prod_{i=4}^5  \bigg(\frac{N_2}{N_i}\bigg)^{7/12} \|D_x^{7/12}P_{N_i}z_i\|_{L_T^3 L_x^4}\\
    &\quad + C_0^{k-3} \|w\|_{L_T^\I H_x^{s-1}}
      \sum_{N_4\ge N_5\ge  N_2\vee N_6}  \bigg(\frac{N_2}{N_4}\bigg)^{5/12}
      \|D_x^{-5/12}P_{N_2}w\|_{L_T^3 L_x^4}\\
    &\quad\quad\times\bigg(\frac{N_6}{N_4}\bigg)^{1/6}
     \|D_x^{7/12}P_{N_4}z_4\|_{L_T^3 L_x^4}
     \bigg(\frac{N_6}{N_5}\bigg)^{7/12}
     \|D_x^{7/12}P_{N_5}z_5\|_{L_T^3 L_x^4}
     \|P_{N_6} z_6 \|_{L_{T,x}^\I} \\
    &\lesssim_k T^{5/8} C_0^{k} G(C_0)
      \|w\|_{L_T^\I H_x^{s-1}}^2
      }
  since $s(\al)\ge 7/12+\be(\al)$.
  On the other hand, $I_{t,3}^{(3.3)}$ can be estimated by the same way as $I_{t,2}^{(3.2)}$.
  We also notice that $J_t^{(3.2)}$ is controlled by $I_{t}^{(3.2)}$.
  This concludes the proof.
   \end{proof}

  \section{local and global-well-posedness}
  \subsection{Unconditional local well-posedness in $ H^s(\T)$ for $ s\ge
s(\alpha) $.}
  \subsubsection{Unconditional uniqueness}
  Let $ u_0\in H^s(\T) $ with $ s\ge s(\al) $ and let $u,v$ be two solutions to the Cauchy problem \eqref{eq1}-\eqref{initial} that belong to $L^\infty_T H^s $. According to Lemma \ref{lem1}, we know that
  $u,v\in Z^s_T $ and  Propositon \ref{difdif}  together with \eqref{estdiffXregular}  ensure that $ u\equiv v $ on $[0,T_0] $ with $ 0<T_0\le T $ that only depends on $\|u\|_{Z^{s(\al)}_T}+\|v\|_{Z^{s(\al)}_T} $. Therefore $ u(T_0)=v(T_0) $ and we can reiterate the same argument on $ [T_0,T] $. This proves that $u\equiv v $ on $[0,T] $  after a finite number of iteration.
  \subsubsection{Local existence} We fix $ \alpha\in [1,2] $.
 It is well known (see \cite{ABFS89}) that  the Cauchy problem \eqref{eq1}--\eqref{initial}, with $L_{\alpha+1}$ satisfying Hypotheses \ref{hyp1}, is  locally well-posed in $ H^s(\T) $ for $ s>3/2 $ with a minimal time of existence $ T=T(\|u_0\|_{H^{\frac{3}{2}+}})$. Indeed, for any $ s\in \R $ and any smooth function $ g $ the $H^s$-scalar product
 $ (L_{\al+1}g,g)_{H^s} $ vanishes, and the classical energy method (which consists of the parabolic regularization, the Kato--Ponce commutator estimate \cite{KP88} and the Bona--Smith argument \cite{BS75}) can be applied.
 So let $u\in C([0,T_0];H^\infty(\T)) $ be a smooth solution to  \eqref{eq1} emanating from a smooth initial data $u_0\in H^\infty(\T) $. According to Lemma \ref{lem1} and Proposition \ref{prop_apri} with $ w_N \equiv 1$, there exists an increasing entire function $ G $ non negative  on $ \R_+ $ such that
  \EQS{\label{LWP}
    \|u\|_{L_T^\I H^s}^2
    \le \|u_0\|_{H^s}^2 + T^{1/4} G(\|u\|_{L_T^\I H_x^{s(\al)}})
    \|u\|_{L_T^\I H^s}^2\; .
  }
 for any $ 0<T\le \min(1,T_0) $ and any $2\ge  s\ge s(\alpha) $.

 Taking $ s=s(\al) $ in \eqref{LWP} and setting $ T_1=G(2\|u_0\|_{H^{s(\al)}})^{-4}$ , we deduce from a continuity argument  that for all
$ 0<T<\min(T_0, T_1) $ it holds
 $ \|u\|_{L_T^\I H^{s(\al)}}^2\le 2  \|u_0\|_{H^{s(\al)}}^2$. Re-injecting this in \eqref{LWP} we obtain that for any $2\ge s \ge  s(\al)  $ and any $ 0<T\le \min(T_0,T_1) $ it holds
 $ \|u\|_{L_T^\I H^{s}}^2\le 2  \|u_0\|_{H^{s}}^2$. Therefore the local well-posedness result in $ H^s(\T) $ for $ s>3/2 $ ensures that $u $ does exist on $ [0,T_1] $ with $ u\in C([0,T_1];H^\infty(\T)) $.

 Now let us fix $u_0\in H^{s}(\T) $ with $2\ge  s\ge s(\al)$.  Setting $ u_{0,n}=P_{\le n} u_0 $, it is clear that the sequence
  $ \{u_{0,n}\}_{n\ge 1} $ belongs to $ H^{\infty} (\T)$ and converges to
$ u_0 $ in $H^s(\T) $. We deduce from  above that the emanating sequence of solutions $\{u_{n}\}_{n\ge 1} $ to \eqref{eq1} is included in $ C([0,T];H^\infty(\R)) $ with
   $ T=G(2\|u_0\|_{H^{s(\al)}})^{-4}$ and satisfies  $ \|u_n\|_{L_T^\infty  H^{s}}^2\le 2  \|u_0\|_{H^{s}}^2$.
   Moreover, we infer from Propositon \ref{difdif}  together with \eqref{estdiffXregular}  that   $\{u_{n}\}_{n\ge 1} $ is a Cauchy sequence in $ C([0,T]; H^{s-1}(\T)) $ and thus converges to some function $u\in L^\infty(]0,T[;H^s (\T) ) $ strongly in  $ C([0,T]; H^{s'} (\T) ) $ for any $ s'<s$ and in particular in $C([0,T] \times \T) $.We can thus pass to the limit on the nonlinear term $ f(u_n) $ and $ u $ satisfies \eqref{eq1}-\eqref{initial} at least in the distributional sense.

  \subsubsection{Strong continuity in $H^s(\T)$}
  Let $u$ be the solution emanating from $u_0\in H^s(\T)$.
  It suffices to check that $u(t)$ is continuous in $H^s(\T)$ at $t=0$ thanks to time translation invariance and reversibility in time (invariance  by the change of variables $(t,x) \mapsto (-t,-x) $) of \eqref{eq1} together with the  uniqueness of the solution.
  Since $u\in C([0,T];H^{s'}(\T))\cap L^\I(]0,T[; H^s(\T))$ for $s'<s$, the standard argument (see [Theorem 2.1, \cite{S66}] for instance) yields that $u(t)$ is weakly continuous in $H^s(\T)$ on $[0,T]$.
  In particular, it holds that $\|u_0\|_{H^s}\le\liminf_{t\to0}\|u(t)\|_{H^s}$.
  On the other hand, we see from \eqref{P} with $\om_N\equiv 1$ that
  $\limsup_{t\to 0}\|u(t)\|_{H^s}\le \|u_0\|_{H^s}$
  since $\|u(t)\|_{H^s}\le \|u\|_{L_T^\I H_x^s}$ for any $t\in[0,T]$ (see for instance [Lemma 2.4, \cite{MPV19}]).
  It thus follows that $\lim_{t\to0}\|u(t)\|_{H^s}=\|u_0\|_{H^s}$.
  This together with the weak continuity shows that  $u(t)$ is continuous in $H^s(\T)$ at $t=0$.

   \subsubsection{Continuity with respect to initial data}
Here, we make use of the frequency enveloppe $ \omega$.  Following \cite{KT1},  for any sequence $ \{u_{0,k}\}_{k\ge 1} $ converging to $u_0$ in $
 H^s(\T) $
there exists a  dyadic sequence $\{\om_N\}$ of positive numbers satisfies
$\om_N\le \om_{2N}\le \de\om_N$ for $N\ge1$ such that
\EQS{\label{env}
\|u_0\|_{H^s_\om} <\infty , \quad  \sup_{k\ge 1} \|u_{0,k} \|_{H^s_\om} <\infty \quad \text{and} \quad  \om_N \nearrow +\infty \; .
}
By Remark \ref{rem_envelope}, we may assume that $1<\de\le 2$.
Now let $ \{u_{0,k}\}_{k\ge 1}  \subset
H^s(\T)  $, with $\sup_{k\ge 1} \|u_{0,k}\|_{H^s} \le 2 \|u_0\|_{H^s} $,  that converges to $ u_0 $ in $  H^s(\T) $ and let  $\{\om_N\}$ satisfying \eqref{env}. Applying Lemma \ref{lem1} and Proposition \ref{prop_apri}  with $\{\om_N\}$ we infer that the sequence of solution $\{u_k\}_{k\ge 1} $ emanating from $ \{u_{0,k}\}_{k\ge 1} $ is bounded in
$ L^\infty_T H^s_\om $ with $T=G(4\|u_0\|_{H^{s(\al)}})^{-4}$. This together with the Lipschitz bound \eqref{eq_difdif} in $ H^{s-1}(\T) $ ensures $\{u_k\}_{k\ge 1} $  converges to $ u $ in $C([0,T];H^s(\T)) $.

\section{Global existence}
\eqref{eq1} is Hamiltonian and enjoys at least two conservation laws that
correspond to the conservation of the mass and of the energy.
The local-wellposedness result ensures that \eqref{eq1} is wellposed in the energy space $ H^{\alpha/2}(\T) $ for $ \alpha\in [\sqrt{2},2] $. In this section we take advantage of this to prove some global existence result in this case.
Note that since  $  p_{\alpha+1} $ is odd, $\frac{p_{\alpha+1} (\xi)}{\xi}  $ is well-defined on $ \Z $.
 We denote by $\partial_x^{-1} L_{\alpha+1} $   the Fourier multiplier by
 $\frac{p_{\alpha+1} (k)}{k}$. \subsection{Conservation laws}
Assume that the  $ u_0\in H^3(\T) $ and let $u\in C([0,T];H^3(\T)) $ be the associated solution to \eqref{eq1}. Multiplying \eqref{eq1} by $ u $ and integrating  by parts, using that $p_{\alpha+1} $ is odd and $ u $ is real valued, we infer that $ M(u)=\int_{\T} u^2(x)\, dx $ is an invariant of the motion.
In the same way multiplying  \eqref{eq1} by $\partial_x^{-1}L_{\alpha+1} u + f(u) $ and integrating by parts we infer that
\EQQS{
E(u)=\frac{1}{2} \int_{\T} u  \partial_x^{-1} L_{\alpha+1} u +\int_{\T}
F(u)
}
where
\EQQS{
F(x)= \sum_{k=0}^{+\infty} \frac{f^{(k)}(0)}{(k+1)!} x^{k+1} , \quad
\text{i.e.} \quad F(x)=\int_0^x f(y)\, dy  \; .
}
is conserved along the flow. Now from Hypothesis \ref{hyp1} we infer that

\begin{equation}\label{sup1}  p_{\alpha+1}(\xi) \sim \xi^{\al+1} , \quad \forall  \xi \ge \xi_0 \; .
\end{equation}
Therefore  $\frac{p_{\alpha+1} (\xi)}{\xi}>0 $ for $ |\xi|\ge \xi_0$ and denoting by  $ \Lambda_{\alpha/2} $ the Fourier multiplier defined by
\EQQS{
  \widehat{ \Lambda_{\alpha/2} u}(k)= \sqrt{\frac{p_{\alpha+1} (k)}{k}} \hat{u}(k) \1 _{|k|\ge \xi_0} 
}
we may  thus rewritte $\int_{\T} u  \partial_x^{-1} L_{\alpha+1} u$ as
\EQQS{
  \int_{\T} u  \partial_x^{-1} L_{\alpha+1} u= \sum_{1\le |k|\le \xi_0}  \frac{p_{\alpha+1} (\xi)}{\xi} |\hat{u}(k)|^2
  +\int_{\T} |\Lambda_{\alpha/2}  P_{\ge \xi_0} u|^2
}
with
\begin{equation}\label{sup2}
\int_{\T} |\Lambda_{\alpha/2}  P_{\ge \xi_0} u|^2  \sim \| P_{\ge \xi_0} u\|_{H^{\alpha/2}}^2
\quad \text{and} \quad \Bigl| \sum_{1\le |k|\le \xi_0}  \frac{p_{\alpha+1} (\xi)}{\xi} |\hat{u}(k)|^2\Bigr|
\le K_0\|u\|_{L^2}^2 \;
\end{equation}
for some fixed $ K_0>0 $.
In particular,  $ u\mapsto \int_{\T} u  \partial_x^{-1} L_{\alpha+1} u $ is continuous from
$ H^{\alpha/2}(\T) $ into $ \R $ which ensure that these conservation laws remain valid for  $ u_0\in H^{\alpha/2}(\T) $ thanks to the continuity with respect to initial data of the flow-map.
\subsection{Arbitrary large initial data}
We assume that we are in one of the two following situations: \\
$ \bullet   $ Case 1: $|F(x) |\lesssim (1+|x|^{p+1})$ for some $ 0<p<2\alpha+1  $.\\
$ \bullet  $ Case 2:  There exists $ B>0  $ such that $ F(x) \le B ,
\; \forall x\in \R $.

In Case 1,  it holds
 \EQQS{
 \bigg|\int_{\T} F(u) \bigg| \lesssim 1+\|u\|_{L^{p+1}}^{p+1} \lesssim 1+ \|u\|_{H^{\frac{1}{2} -\frac{1}{p+1}} }^{p+1}
 \lesssim  1+ \|u\|_{L^2}^{p+1-\frac{p-1}{\alpha} } \|u\|_{H^{\alpha/2}}^\frac{p-1}{\alpha}.
 }
Therefore since  $ p<2\alpha +1 $ and thus $\frac{p-1}{\alpha}<2$  , the conservation of the $ L^2$-norm together with Young's inequality and \eqref{sup2} lead to
\EQQS{
  \bigg|\int_{\T} F(u(t)) \bigg| \le C_0+ C_1(\|u_0\|_{L^2})  \|u(t)\|_{H^{\alpha/2}}^\frac{p-1}{\alpha} \le C_0 +  \frac{1}{4} \int_{\T} |\Lambda_{\alpha/2} P_{\ge \xi_0} u|^2 + C_2(\|u_0\|_{L^2}) \; .
}
  The conservation of the energy and \eqref{sup2} then ensure that the trajectory of $ u $ remains bounded in $ H^{\alpha/2}(\T) $. This, together
with the local well-posedness result, leads to the global existence of the solution in $ H^s(\T) $ for $ s\ge \alpha/2 $.
  Typical examples of Case 1 are:\\
  $ \bullet $  $f(x)$ is a  polynomial function of degree strictly less than $ 2\al +1$. \\
  $\bullet $ $f(x)$ is a polynomial function of $ \sin(x) $ and $\cos(x) $.   \\
   \vspace{2mm}\\
 In Case 2, it holds $ \int_{\T} F(u) \le 2 \pi B $ and
  the conservation of the energy  together with \eqref{sup2} ensure  that
for any $ t\in [0,T] $
 \EQQS{
   \|u(t)\|_{H^{\alpha/2}}^2 \lesssim E(u_0) +M(u_0)+ \int_{\T} F(u) \lesssim E(u_0)+M(u_0)+ 2 \pi B
 }
 that  leads to the global existence result.  Typical examples of Case 2
are:\\
   $ \bullet $ $ f(x) $ is a  polynomial function of odd degree with $ \displaystyle\lim_{x\to +\infty} f(x)=-\infty $. \\
  $\bullet $ $f(x) =-\exp(x) $ or $ f(x)=-\sinh(x) $.
\subsection{Small initial data}
We set
\EQQS{
\tilde{F} (x) = \sum_{k=2}^{+\infty} \frac{f^{(k)}(0)}{(k+1)!} x^{k+1} =F(x)- \frac{f'(0)}{2} x^2
\quad \text{and} \quad G(x) = \sum_{k=2}^{+\infty} \frac{|f^{k}(0)|}{(k+1)!} x^{k-2}\; .
}
Then the conservations of $ M $ and $ E $ lead to
\begin{eqnarray}
E(u) +\Bigl( \frac{K_0+1-f'(0)}{2}  \Bigr) M(u)  &= &
 \frac{1}{2}  \int_{\T} \Bigl((K_0+1) |u|^2+u  \partial_x^{-1} L_{\alpha+1} u\Bigr) +
\int_{\T} \tilde{F}(u) \nonumber \\
& = & E(u_0) +\Bigl( \frac{K_0+1-f'(0)}{2}  \Bigr) M(u_0)\; , \label{sup3}
\end{eqnarray}
where $ K_0 >0 $ is the constant interfering in \eqref{sup2}.\\
We observe that $\displaystyle \lim_{x\to 0} G(x) =| f'(0)/6| $ and thus there exists $ \delta_0>0 $ such that for $\|u\|_{H^{\alpha/2}} <\delta_0$ it holds
$ |G(u)| \le G(\|u\|_{L^\infty_x}) \le |f'(0)| $. Under this restriction we thus have
\EQQS{
  \bigg|\int_{\T} \tilde{F}(u) \bigg| \le |f'(0)| \int_{\T} |u|^3 \le C_0  |f'(0)| \|u\|_{H^{\alpha/2}}^3.
}
Therefore according to \eqref{sup2}-\eqref{sup3} there exists a positive constant $ C_1 >0 $ that depends on $ L_{\alpha+1} $ such that
\EQQS{
  C_1\|u\|_{H^{\alpha/2}}^2-C_0 | f'(0)| \|u\|_{H^{\alpha/2}}^3 \le E(u) +\Bigl( K_0+1-\frac{f'(0)}{2}  \Bigr) M(u) \; .
}
We set
\EQQS{
C=\max\Bigl(\frac{C_0 | f'(0)|}{C_1}, \frac{1}{2\delta_0}\Bigr)\quad \text{and} \quad
p(x)=  C_1 (x^2- C x^3 ).
}
It is easy to check that $p(x) \le C_1 x^2-C_0 | f'(0)| x^3 $ on $ \R_+ $ and that
 $ p(\frac{1}{2C} )=\frac{C_1}{8} (\frac{C_1}{C})^2 >0  $.

Let $ s\ge \alpha/2 $. We infer from above that for any  $u_0\in H^s(\T) $
such that
\begin{equation}\label{glob1}
  E(u_0) +\Bigl( K_0+1-\frac{f'(0)}{2}  \Bigr) M(u_0)<
\frac{C_1}{8} \Bigl(\frac{C_1}{C}\Bigr)^2
\end{equation}
 the solution $ u\in C([0,T] ; H^s(\T)) $ emanating from $ u_0 $ satisfies
\begin{equation}\label{glob2}
p(\|u(t)\|_{H^{\alpha/2}}) < E(u_0) +\Bigl( K_0+1-\frac{f'(0)}{2}  \Bigr)
M(u_0)<\frac{C_1}{8} \Bigl(\frac{C_1}{C}\Bigr)^2
\end{equation}
for any $ t\in [0,T] $ such that $ \|u(t)\|_{H^{\alpha/2}}<\delta_0 $. Since $
p(1/2C) =\frac{C_1}{8} (\frac{C_1}{C})^2$ and $ 1/2C \le \delta_0 $, a continuity argument ensures that   for $ u_0\in H^{\alpha/2}(\T) $ satisfying \eqref{glob1} with $ \|u_0\|_{H^{\alpha/2}}<\delta_0 $, the emanating solution $ u $ verifies  \eqref{glob2} for all times and thus can be extended globally in times
 according to the LWP result.
Finally since of course, by the continuity of the energy in $ H^{\alpha/2}(\T) $, \eqref{glob1} is satisfied for $   \|u_0\|_{H^{\alpha/2}}$ small
enough, this leads to  the global existence result in $ H^s(\T) $, $ s\ge
\alpha/2 $, for initial data with $ H^{\alpha/2}$-norm small enough.

\section*{Acknowlegdements}

The second author was supported by JSPS KAKENHI Grant Number JP20J12750, JSPS Overseas Challenge Program for Young Researchers and Iizuka Takeshi Scholarship Foundation. This work was conducted during a visit of the second author at Institut Denis Poisson (IDP) of Universit\'e de Tours in France. The second author is deeply grateful to IDP for its kind hospitality.
The authors are grateful to the anonymous referees for valuable  remarks and suggestions that improved this manuscript.


\begin{thebibliography}{99}
  \bibitem{ABFS89} L. Abdelouhab, J. L. Bona, M. Felland, and J.-C. Saut,
  \textit{Nonlocal models for nonlinear, dispersive waves},
 Phys.\ D,  \textbf{40} (1989), 360--392.

  \bibitem{BIT11} A. Babin, A. Ilyin and E. Titi, \textit{On the regularization mechanism for the periodic Korteweg-de Vries equation}, Comm.\ Pure Appl.\ Math. \textbf{64} (2011), 591--648.

  \bibitem{B93} J. Bourgain, \textit{Fourier transform restriction phenomena
for certain lattice subsets and applications to nonlinear evolution equations, part I: Schr\"odinger equation, part II: The KdV-equation}, Geom.\ Funct.\ Anal.\ \textbf{3} (1993), 107--156, 209--262.

 \bibitem{BS75} J. L. Bona and R. Smith, \textit{The initial-value problem for the Korteweg-de Vries equation}, Philos.\ Trans.\ Roy.\ Soc.\ London Ser.\ A \textbf{278} (1975), 555--601.

 \bibitem{CKSTT04}  J. Colliander, M. Keel, G. Staffilani, H. Takaoka and
 T. Tao,
 \textit{Multilinear estimates for periodic KdV equations, and applications},
J. Funct.\ Anal. \textbf{211} (2004), 173-218.

\bibitem{GKT}  P. G\'erard, T. Kappeler and P. Topalov,
 \textit{Sharp well-posedness results of the Benjamin-Ono equation in $H^s(\T,\R) $  and qualitative properties of its solution},
  arXiv:2004.04857.

  \bibitem{Guo12} Z. Guo,
\textit{Local well-posedness for dispersion generalized Benjamin-Ono equations in Sobolev spaces},
 \newblock  J. Differential Equations  \textbf{252} (2012), 2053-2084.

 \bibitem{GKO13} Z. Guo, S. Kwon and T. Oh, \textit{Poincar\'e-Dulac normal form reduction for unconditional well-posedness of the periodic cubic NLS}, Comm.\ Math.\ Phys. \textbf{322} (2013), 19--48.

  \bibitem{GLM14} Z. Guo, Y. Lin and L. Molinet, \textit{Well-posedness in energy space for the periodic modified Benjamin-Ono equation}, J. Differential Equations \textbf{256} (2014), 2778--2806.

  \bibitem{HIKK} S. Herr, A. D. Ionescu, C. E. Kenig and H. Koch,
\textit{A para-differential renormalization technique for nonlinear dispersive equations,}
 Comm.\ Partial Differential Equations \textbf{35} (2010), 1827--1875.

\bibitem{IKT} A. D. Ionescu, C. E. Kenig and D. Tataru,
     \textit{Global well-posedness of the initial value problem
for the KP-I equation in the energy space},
\newblock Invent.\ Math. {\textbf 173} (2008), 265--304.


\bibitem{KT06} T. Kappeler and T. Topalov,
\textit{Global wellposedness of KdV in $ H^{-1}(\T,\R) $},
Duke Math. J. \textbf{135} (2006), 327--360.
 \bibitem{Kato83} T. Kato,
\textit{On the Cauchy problem for the (generalized) Korteweg-de Vries equation},
 Stud.\ Appl.\ Math. {\textbf 8} (1983), 93--126.

 \bibitem{Kato95} T. Kato,
 \textit{On nonlinear Schr\"odinger equations. II. $H^s$-solutions and unconditional well-posedness},
  J. Anal.\ Math. {\textbf 67} (1995), 281--306. with Correction in  J. Anal.\ Math {\textbf 68} (1996), 305.

 \bibitem{KP88} T. Kato and G. Ponce, \textit{Commutator estimate and the Euler and Navier-Stokes equations}, Comm.\ Pure Appl.\ Math. \textbf{41} (1988), 891--907.

 \bibitem{KPV1} C. E. Kenig, G. Ponce, and L. Vega,
\textit{Well-posedness of the initial value problem for the Korteweg-de Vries equation},
 J. Amer.\ Math.\ Soc. \textbf{4} (1991), 323--347.

\bibitem{KPV2} C. E. Kenig, G. Ponce and L. Vega,
\textit{Well-posedness and scattering results for the generalized Korteweg-de Vries equation
via the contraction principle,}
 Comm.\ Pure Appl.\ Math. \textbf{46}  (1993), 527--620.

 \bibitem{KK} C. Kenig and K. Koenig,
\textit{On the local well-posedness of the  Benjamin-Ono and modified  Benjamin-Ono equations},
\newblock Math.\ Res.\ Letters \textbf{10} (2003), 879-895.


  \bibitem{KS20p} K. Kim and R. Schippa, \textit{Low regularity well-posedness for generalized Benjamin-Ono equations on the circle}, arXiv:2007.15505.

\bibitem{K19p1} N. Kishimoto, \textit{Unconditional uniqueness for the periodic Benjamin-Ono equation by normal form approach}, arXiv:1911.11108.

  \bibitem{K19p} N. Kishimoto, \textit{Unconditional uniqueness for the periodic modified Benjamin-Ono equation by normal form approach}, arXiv:1912.01363v2.


    \bibitem{KTa} H. Koch and D. Tataru,
  \textit{A priori bounds for the 1D cubic NLS in negative Sobolev spaces},
    Int.\ Math.\ Res.\ Not.\ IMRN, (2007), Art.\ ID rnm053, 36 pp.

\bibitem{KT1}  H. Koch and N. Tzvetkov,
\textit{On the local well-posedness of the Benjamin-Ono equation in $H^s(\R)$}, Int.\ Math.\ Res.\ Not.\ IMRN, \textbf{26} (2003), 1449-1464.

\bibitem{KO12} S. Kwon and T. Oh, \textit{On unconditional well-posedness of modified KdV}, Int.\ Math.\ Res.\ Not.\ IMRN 2012, 3509--3534.

\bibitem{KOY20} S. Kwon, T. Oh and H. Yoon, \textit{Normal form approach to unconditional well-posedness of nonlinear dispersive PDEs on the real line}, Ann.\ Fac.\ Sci.\ Toulouse Math. \textbf{29} (2020), 649--720.

  \bibitem{MN08} N. Masmoudi and K. Nakanishi, \textit{Energy convergence for singular limits of Zakharov type systems}, Invent.\ Math. \textbf{172} (2008), 535--583.

  \bibitem{MST} L. Molinet, J.-C. Saut and N. Tzvetkov,
\textit{Ill-posedness issues for the Benjamin-Ono and related equations,}
\newblock SIAM J. Math.\ Anal.,  \textbf{33} (2001), 982--988.

  \bibitem{M07} L. Molinet, \textit{Global well-posedness in the energy space for the Benjamin-Ono equation on the circle}, Math.\ Ann. \textbf{337} (2007), 353--383.

  \bibitem{MR09} L. Molinet and F. Ribaud, \textit{Well-posedness in $H^1$
for generalized Benjamin-Ono equations on the circle}, Discrete Contin.\ Dyn.\ Syst. \textbf{23} (2009), 1295--1311.

  \bibitem{MV15} L. Molinet and S. Vento, \textit{Improvement of the energy method for strongly nonresonant dispersive equations and applications}, Anal.\ PDE \textbf{8} (2015), 1455--1495.

   \bibitem{MPV18} L. Molinet, D. Pilod and S. Vento, \textit{On well-posedness for some dispersive perturbations of Burgers' equation},
  Ann.\ I. H. Poincar\'e Anal.\ Non Lin\'eaire \textbf{35} (2018), 1719-1756.

   \bibitem{MPV19} L. Molinet, D. Pilod and S. Vento, \textit{On unconditional well-posedness for the periodic modified Korteweg-de Vries equation}, J. Math.\ Soc.\ Japan \textbf{71} (2019), 147--201.

   \bibitem{MPp} R. Mosincat and D. Pilod, \textit{Unconditional uniqueness for the Benjamin-Ono equation}, arXiv:2110.07017.

   \bibitem{NW07} M. Nakamura and T. Wada, \textit{Global existence and uniqueness of solutions to the Maxwell-Schr\"odinger equations}, Comm.\ Math.\ Phys. \textbf{276} (2007), 315--339.

  \bibitem{P21} J. Palacios,
  \textit{Local well-posedness for the gKdV equation on the background of a bounded function}, arXiv:2104.15126.

   \bibitem{Po1} G. Ponce,
\textit{On the global well-posedness of the Benjamin-Ono equation,}
 Diff.\ Int.\ Eq. \textbf{4} (1991), 527--542.

  \bibitem{ST01} J.-C. Saut and N. Tzvetkov, \textit{On periodic KP-I type equations}, Comm.\ Math.\ Phys.\ \textbf{221} (2001), 451--476.

  \bibitem{Schi20} R. Schippa, \textit{Local and global well-posedness for dispersion generalized Benjamin-Ono equations on the circle}, Nonlinear Anal. \textbf{196} (2020), 111777, 38 pp.

  \bibitem{Smith} R. Smith,
\textit{Nonlinear Kelvin and continental-shelf waves},
J. Fluid Mech. \textbf{57} (1972), 379-391.

  \bibitem{S66} W. A. Strauss, \textit{On continuity of functions with values in various Banach spaces}, Pacific J. Math. \textbf{19} (1966), 543--551.

  \bibitem{Tao04} T. Tao, \textit{Global well-posedness of the Benjamin-Ono equation in $H^1(\R)$}, J. Hyperbolic Differ.\ Equ. \textbf{1} (2004), 27--49.

  \bibitem{Tao} T. Tao, Nonlinear Dispersive Equations; Local and Global Analysis, CBMS, 2006.
\end{thebibliography}
\end{document}